\newcommand{\N}{\mathbb{N}}
\newcommand{\R}{\mathbb{R}}
\newcommand{\calC}{\mathcal{C}}
\newcommand{\calD}{\mathcal{D}}
\newcommand{\calH}{\mathcal{H}}
\newcommand{\calM}{\mathcal{M}}
\newcommand{\calP}{\mathcal{P}}
\newcommand{\eqdef}{\ensuremath{\stackrel{\mbox{\upshape\tiny def.}}{=}}}
\newcommand{\abs}[1]{\left\vert #1 \right\vert}
\newcommand{\norm}[1]{\Vert #1 \Vert}
\newcommand{\set}[1]{\left\lbrace #1\right\rbrace}
\newcommand{\sse}{\subseteq}
\newcommand{\sprod}[1]{\left\langle #1 \right\rangle}
\newcommand{\matleq}{\preccurlyeq}
\newcommand{\matgeq}{\succcurlyeq}
\newcommand{\leqsim}{\lesssim}
\DeclareMathOperator{\dist}{dist}
\DeclareMathOperator{\sgn}{sign}
\DeclareMathOperator{\supp}{supp}
\DeclareMathOperator{\id}{id}
\DeclareMathOperator{\Id}{Id}
\newcommand{\argmin}{\mathop{\mathrm{argmin}}}
\newcommand{\argmax}{\mathop{\mathrm{argmax}}}
\DeclareMathOperator{\dom}{dom}
\DeclareMathOperator{\hess}{hess}
\newtheorem{lem}{Lemma}
\newtheorem{prop}[lem]{Proposition}
\newtheorem{theo}[lem]{Theorem}
\newtheorem{cor}[lem]{Corollary}
\newtheorem{assumption}{Assumption}
\newtheorem{defi}{Definition}
\newtheorem{rem}{Remark}
\theoremstyle{definition}
\numberwithin{lem}{section}
\let\bbordermatrix\bordermatrix
\patchcmd{\bbordermatrix}{8.75}{4.75}{}{}
\patchcmd{\bbordermatrix}{\left(}{\left[}{}{}
\patchcmd{\bbordermatrix}{\right)}{\right]}{}{}
\author[1]{Axel Flinth}
\author[1,2]{Fr\'ed\'eric de Gournay}
\author[1,2]{Pierre Weiss}
\affil[1]{IMT, Universit\'{e} de Toulouse, CNRS}
\affil[2]{ITAV, Universit\'{e} de Toulouse, CNRS}
\title{On the linear convergence rates of exchange and continuous methods for total variation minimization}
\begin{document}

\maketitle
% {\small
% \tableofcontents
% }
% \newpage

\begin{abstract}
We analyze an exchange algorithm for the numerical solution total-variation regularized inverse problems over the space $\calM(\Omega)$ of Radon measures on a subset $\Omega$ of $\R^d$. Our main result states that under some regularity conditions, the method eventually converges linearly. Additionally, we prove that continuously optimizing the amplitudes of positions of the target measure will succeed at a linear rate with a good initialization. Finally, we propose to combine the two approaches into an alternating method and discuss the comparative advantages of this approach. 

{\bf Keywords:} Total variation minimization, inverse problems, superresolution, semi-infinite programming.

{\bf MSC Classification:}  49M25, 49M29, 90C34, 65K05.  
\end{abstract}

\subsubsection*{Acknowledgement}
The authors acknowledge support from ANR
JCJC OMS.

\section{Introduction}

\subsection{The problem}

The main objective of this paper is to develop and analyze iterative algorithms to solve the following infinite dimensional problem:
\begin{equation}\label{eq:primal}
 \tag{$\calP(\Omega)$} \inf_{\mu \in \calM(\Omega)} J(\mu)\eqdef \norm{\mu}_{\calM} + f(A\mu),
\end{equation}
where $\Omega$ is a bounded open domain of $\R^d$, $\calM(\Omega)$ is the set of Radon measures on $\Omega$, $\norm{\mu}_{\calM}$ is the total variation (or mass) of the measure $\mu$, $f:\R^m\to \R\cup\{+\infty\}$ is a convex lower semi-continuous function with non-empty domain and $A: \calM(\Omega) \to \R^m$ is a linear measurement operator. 

An important property of Problem \eqref{eq:primal} is that at least one of its solutions $\mu^\star$ has a support restricted to $s$ distinct points with $s\leq m$ (see e.g. \cite{Zuhovickii1948,fisher_spline_1975,boyer2018representer}), i.e. is of the form
\begin{equation}
\mu^\star = \sum_{i=1}^s \alpha_i^\star \delta_{\xi_i},
\end{equation}
with $\xi_i\in \Omega$ and $\alpha_i^\star\in \R$. This property motivates us to study a class of \emph{exchange} algorithms. They were introduced as early as 1934 \cite{remes1934procede} and then extended in various manners \cite{rettich98}. They consist in discretizing the domain $\Omega$ coarsely and then refining it adaptively based on the analysis of so-called dual certificates. If the refinement process takes place around the locations $(\xi_i)$ only, these methods considerably reduce the computational burden compared to a finely discretized mesh.

Our main results consist in a set of convergence rates for this algorithm that depend on the regularity of $f$ and on the non-degeneracy of a dual certificate at the solution. We also show the linear convergence rate for first order algorithms that continuously vary the coefficients $\alpha_i$ and $x_i$ of a discrete measure. Finally, we show that algorithms alternating between an exchange step and a continuous method share the best of both worlds: the global convergence guarantees of exchange algorithms together with the efficiency of first order methods. This yields a fast adaptive method with strong  convergence guarantees for total variation minimization and related problems.

\subsection{Applications}

Our initial motivation to study the problem \eqref{eq:primal} stems from signal processing applications. 
We recover an infinite dimensional  version of the \emph{basis pursuit} problem \cite{chen2001atomic} by setting
\begin{equation*}
f(x)=\iota_{\{y\}}(x) = \begin{cases}
                                     0 & \textrm{ if } x=y \\
                                     +\infty & \textrm{otherwise.}
                                    \end{cases} 
\end{equation*}
Similarly, the choice $f(x) = \frac{\tau}{2}\|x-y\|_2^2$, leads to an extension of the LASSO \cite{tibshirani1996regression} called Beurling LASSO \cite{de2012exact}. Both problems proved to be extremely useful in engineering applications. They got a significant attention recently thanks to theoretical  progresses in the field of super-resolution \cite{de2012exact,tang2013compressed,candes2014towards,duval2015exact}. Our results are particularly strong for the quadratic fidelity term.

\subsection{Numerical approaches in signal processing}

The progresses on super-resolution \cite{de2012exact,tang2013compressed,candes2014towards,duval2015exact} motivated researchers from this field to develop numerical algorithms for the resolution of Problem \eqref{eq:primal}. By far the most widespread approach is to use a fine uniform discretization and solve a finite dimensional problem. The complexity of this approach is however too large if one wishes high precision solutions. This approach was analyzed from a theoretical point of view in \cite{tang2013sparse,duval2015exact} for instance. The first papers investigating the use of \eqref{eq:primal} for super-resolution purposes advocated the use of semi-definite relaxations \cite{tang2013compressed,candes2014towards}, which are limited to specific measurement functions and domains, such as trigonometric polynomials on the 1D torus $\mathbb{T}$. 
The limitations were significantly reduced in \cite{de2017exact}, where the authors suggested the use of Lasserre hierarchies. These methods are however currently unable to deal with large scale problems.
Another approach suggested in \cite{bredies2013inverse}, and referred to as a Frank-Wolfe algorithm, consists in adding one point to a discretization set iteratively, where a so-called dual certificate is maximal.  
More recently, \cite{traonmilin2018basins} began investigating the use of methods that continuously vary the positions $(x_i)$ and amplitudes $(\alpha_i)$ of discrete measures parameterized as $\mu=\sum_{i=1}^s \alpha_i \delta_{x_i}$. The authors gave sufficient conditions for a simple gradient descent on the product-space $(\alpha,x)$ to converge. 
In \cite{boyd2017alternating} and \cite{denoyelle2018sliding}, this method was used alternatively with a Frank-Wolfe algorithm, the idea being to first add Dirac masses roughly at the right locations and then to optimize their locations and position continuously, leading to promising numerical results. Surprisingly enough, it seems that the connection with the mature field of semi-infinite programming has been ignored (or not explicitly stated) in all the mentioned references. 

%In particular the approaches \cite{bredies2013inverse,denoyelle2018sliding} are strongly related to the class of exchange and algorithms, which are described hereafter.

% [ORGANIZATION: maybe discuss various procedures for the refinement process...]

\subsection{Some numerical approaches in semi-infinite programming}

A semi-infinite program \cite{rettich98,hettich1993semi} is traditionally defined as a problem of the form
\begin{align}\label{eq:SIP}
   \tag{SIP$[\Omega]$}  \min_{\substack{q \in Q\\ c(x,q) \leq 0 , x \in \Omega}} u(q) 
\end{align}
where $Q$ and $\Omega$ are subsets of $\R^n$ and $\R^m$ respectively, $u: Q \to \R$ and $c : \Omega \times Q\to \R$ are functions. The term semi-infinite stems from the fact that the variable $q$ is finite-dimensional, but it is subject to infinitely many constraints $c(x,q) \leq 0$ for $x\in \Omega$. In order to see the connection between the semi-infinite program \eqref{eq:SIP} and our problem \eqref{eq:primal}, we can formulate its \emph{dual}, which reads as 
\begin{equation}\label{eq:dual}
     \tag{$\calD(\Omega)$} \sup_{q \in \R^m, \|A^*q\|_\infty \leq 1} - f^*(q).
\end{equation}
This dual will play a critical role in all the paper and it is easy to relate it to a SIP by setting $Q=\R^m$, $u= f^*$ and $c= \abs{(A^*q)(x)}-1$.

Many numerical methods have been and are still being developed for semi-infinite programs and we refer the interested reader to the excellent chapter 7 of the survey book \cite{rettich98} for more insight. We sketch below two classes of methods that are of interest for our concerns.

\subsubsection{Exchange algorithms} 

A canonical way of discretizing a semi-infinite program is to simply control finitely many of the constraints, say $c(x,q) \leq 0$ for $x \in \Omega_0 \sse \Omega$, where $\Omega_0$ is finite. The discretized problem SIP$[\Omega_0]$ can then be solved by standard proximal methods or interior point methods. In order to obtain convergence towards an exact solution of the problem, it is possible to choose a sequence $(\Omega_k)$ of nested sets such that $\bigcup_{k} \Omega_k$ is dense in $\Omega$. Solving the problems SIP$[\Omega_k]$ for large $k$ however leads to a high numerical complexity due to the high number of discretization points. 
The idea of exchange algorithms is to iteratively update the discretization sets $\Omega_k$ in a more clever manner than simply making them denser. A generic description is given by Algorithm \ref{alg:exchange}. 

\begin{algorithm}[htbp]
\caption{A Generic Exchange Algorithm\label{alg:exchange}}
\begin{algorithmic}[1]
\State \textbf{Input:} Objective function $u$, Constraint function $c$, Constraint sets $\Omega$ and $Q$, Initial discretization set $\Omega_0$.
\While{Not converged}
%\State Compute $\nabla F(y^{(k)}) = A^*(A y^{(k)} -u_0 )$. \Comment{\textcolor{red}{\textbf{99.35$''$}}}
  \State Set $\displaystyle q_k\in \argmin_{\substack{q \in Q \\ c(x,q)\leq 0, x\in \Omega_k}} u(q)$ \\
  \State Set  $\Omega_{k+1} = \mathrm{Update\_Rule}(\Omega_k,q_k,k)$.
\EndWhile
\State \textbf{Output:} The last iterate $q_\infty$.
\end{algorithmic}
\end{algorithm}

In this paper, we consider $\mathrm{Update\_Rule}$s of the form
\begin{align*}
 \Omega_{k+1} \subset \Omega_k \cup \{x_k^1,\hdots, x_k^{p_k}\},
\end{align*}
where the points $x_k^{i}$ are \emph{local maximizers} of $c(\cdot,q_k)$. At each iteration, the set of discretization points can therefore be updated by adding and dropping a few prescribed points, explaining the name 'exchange'. The simplest rule consists of adding the single most violating point, i.e.
\begin{align}\label{eq:Frank_Wolfe}
    \Omega_{k+1}= \Omega_k \cup \argmax_{x\in \Omega} c(x, q_k).
\end{align}
It seems to be the first exchange algorithm and is nearly equivalent to the Remez algorithm from the 30's \cite{remes1934procede}. It can be shown to be equivalent to a Frank-Wolfe (a.k.a. conditional gradient) method up to an epigraphical lift \cite{denoyelle2018sliding}. These methods were introduced in the field of signal processing in \cite{bredies2013inverse} and the connection with exchange algorithms was proposed in \cite{eftekhari2018bridge}.
The update rule \eqref{eq:Frank_Wolfe} is sufficient to guarantee convergence in the generic case and to ensure a decay of the cost function in $O\left(\frac{1}{k}\right)$, see \cite{levitin1966constrained}. Although 'exchange' suggests that points are both added and subtracted, methods for which $\Omega_k \sse \Omega_{k+1}$ are also coined exchange algorithms. The use of such rules often leads to easier convergence analyses, since we get monotonicity of the objective values $u(q_k)$ for free \cite{hettich1993semi}. Other examples \cite{hettich1982} include only adding points if they exceed a certain margin, i.e. $c(x,y) \geq \epsilon_k$, or all local maxima of $c(q_k,\cdot)$. In the case of convex functions $f$, algorithms that both add and remove points can be derived and analyzed with the use of cutting plane methods. All these instances have their pros and cons and perform differently on different types of problems. Since a semi-infinite program basically allows to minimize \emph{arbitrary} continuous and finite dimensional problems, a theoretical comparison should depend on additional properties of the problem.

% \paragraph{Discretization methods}
% 
% The discretization methods differ from exchange rules in that the points added at each iteration belong to a fixed predefined grid. They will be of central interest in this paper. Let $(X_k)_{k\in \mathbb{N}}$ denote a sequence of nested discrete sets of points in $\Omega$ such that $\lim_{k\to \infty} \mathrm{dist}(X_k,\Omega)=0$. The update rule of a discretization method takes the form
% \begin{align}\label{eq:discretization_method}
%     \Omega_{k+1} \subset X_{k+1},
% \end{align}
% where $\Omega_{k+1}$ is chosen on the basis of $\Omega_k$ and $q_k$. For instance, one can consider the family of methods 
% \begin{equation}
%  \Omega_{k+1} \supseteq X_k \cup \argmax_{x\in X_{k+1}} c(q_k, x).
% \end{equation}
% The interest of that rule is that it discards the need to find the local maximizers of $c(\cdot, q_k)$, which might be complicated in practice. Under mild conditions, this approach was proven in \cite{reemtsen1991discretization} to produce a sequence $(q_k)$ that converges - up to a subsequence - to the global minimizer of \eqref{eq:SIP}. In this paper, we will focus on a similar rule. 

\subsubsection{Continuous methods}

Every iteration of an exchange algorithm can be costly: it requires solving a convex program with a number of constraints that increases if no discretization point is dropped. In addition, the problems tend to get more and more degenerate as the discretization points cluster, leading to numerical inaccuracies. In practice it is therefore tempting to use the following two-step strategy: i) find an approximate solution $\mu_k=\sum_{i=1}^{p_k} \alpha_k^i \delta_{x_k^i}$ of the primal problem \eqref{eq:primal} using $k$ iterations of an exchange algorithm and ii) continuously move the positions $X=(x_i)$ and amplitudes $\alpha=(\alpha_i)$ starting from $(\alpha_k,X_k)$ to minimize \eqref{eq:primal} using a nonlinear programming approach such as a gradient descent, a conjugate gradient algorithm or a Newton approach. 

This procedure supposes that the output $\mu_k$ of the exchange algorithm has the right number $p_k=s$ of Dirac masses, that their amplitudes satisfy $\sgn(\alpha_i)=\sgn(\alpha_i^\star)$ and that $\mu_k$ lies in the basin of attraction of the optimization algorithm around the global minimum $\mu^\star$. To the best of our knowledge, knowing a priori when those conditions are met is still an open problem and deciding when to switch from an exchange algorithm to a continuous method therefore relies on heuristics such as detecting when the number of masses $p_k$ stagnates for a few iterations. 
The cost of continuous methods is however much smaller than that of exchange algorithms since they amount to work over a small number $s(d+1)$ of variables. In addition, the instabilities mentioned earlier are significantly reduced for these methods. This observation was already made in \cite{boyd2017alternating,denoyelle2018sliding} and proved in \cite{traonmilin2018basins} for specific problems.

\subsection{Contribution}

Many recent results in the field of super-resolution provide sufficient conditions for a \emph{non degenerate source condition} to hold \cite{candes2014towards, tang2013compressed,dossal2017sampling,bodmann2018compressed,poon2018support}. The non degeneracy means that the solution $q^\star$ of \eqref{eq:dual} is unique and that the \emph{dual certificate} $|A^*q^\star|$ reaches $1$ at exactly $s$ points, where it is strictly concave. The main purpose of this paper is to study the implications of this non degeneracy for the convergence of a class of exchange algorithms and for continuous methods based on gradient descents. Our main results are as follows:
\begin{enumerate}
 \item We show an eventual linear convergence rate of a class of exchange algorithms for convex functions $f$ with Lipschitz continuous gradient. More precisely, we prove that after a finite number of iterations $N$ the algorithm outputs vectors $q_k$ such that the set 
 \begin{equation}\label{eq:defXk}
 X_k \eqdef \{x\in \Omega \, \vert \,  x_k \text{ local maximizer of } \abs{A^*q_k}, \ |A^*q_k|(x)\geq 1\} 
 \end{equation}
 contains exactly $s$-points $(x_k^1, \hdots, x_k^s)$. 
 
 Letting $\widehat{\mu}_k=\sum_{i=1}^s \alpha_i^{k} \delta_{x_i^k}$ denote the solution of the finite dimensional problem $\inf_{\mu \in \calM(X_k)} \|\mu\|_{\calM} + f(A\mu)$, we also show the linear convergence rate of the cost function $J(\mu_k)$ to $J(\mu^\star)$ and of the support in the following sense:  after a number $N$ of initial iterations, it will take no more that $k_\tau= C\log(\tau^{-1})$ iterations to ensure that $\dist(X_{k_\tau + N},\xi)\leq \tau$. A similar statement holds for the coefficient vectors $\alpha^{k}$. %Under additional assumptions we also obtain a linear rate on the coefficients amplitude $\|\alpha_{k+1} -\alpha^\star\|_2^2\leq c\|\alpha_k - \alpha^\star\|_2^2$ for some $c<1$.
 
 \item We also show that a well-initialized gradient descent algorithm on the pair $(\alpha,x)$ converges linearly to the true solution $\mu^\star$ and explicit the width of the basin of attraction.
 
 \item We then show how the proposed guarantees may explain the success of methods alternating between exchange methods and continuous methods at each step, in a spirit similar to the sliding Frank-Wolfe algorithm \cite{denoyelle2018sliding}.
 
 \item We finally illustrate the above results on total variation based problems in 1D and 2D.
\end{enumerate}

\section{Preliminaries}

\subsection{Notation}

In all the paper, $\Omega$ designs an open \emph{bounded} domain of $\R^d$. The boundedness assumptions plays an important role to control the number of elements in discretization procedures. 
A \emph{grid} $\Omega_k$ is a finite set of points in $\Omega$. Its cardinality is denoted by $|\Omega_k|$.
The distance between two sets $\Omega_1$ and $\Omega_2$ is defined by
\begin{equation}
 \dist(\Omega_1,\Omega_2)=\sup_{x_2\in \Omega_2} \inf_{x_1\in \Omega_1} \|x_1-x_2\|_2.
\end{equation}
Note that this definition of distance is not symmetric: in general $\dist(\Omega_1,\Omega_2)\neq \dist(\Omega_2,\Omega_1)$.

We let $\calC_0(\Omega)$ denote the set of continuous functions on $\Omega$ vanishing on the boundary.
The set of Radon measures $\calM(\Omega)$ can be identified as the dual of $\calC_0(\Omega)$, i.e. the set of continuous linear forms on $\calC_0(\Omega)$. 
For any sub-domain $\Omega_k\subset \Omega$, we let $\calM(\Omega_k)$ denote the set of Radon measures supported on $\Omega_k$. For $p\in [1,+\infty]$, the $L^p$-norm of a function $u\in \calC_0(\Omega)$ is denoted by $\|u\|_p$. 
The total variation of a measure $\mu \in \calM(\Omega)$ is denoted $\norm{\mu}_{\calM}$. It can be defined by duality as
\begin{equation}
\norm{\mu}_{\calM} = \sup_{\substack{ u\in \calC_0(\Omega) \\ \|u\|_\infty\leq 1}} \mu(u).
\end{equation}
The $\ell^p$-norm of a vector $x\in \R^m$ is also denoted $\|x\|_p$. The Frobenius norm of a matrix $M$ is denoted by $\|M\|_F$.

Let $f:\R^m\to \R\cup\{+\infty\}$ denote a convex lower semi-continuous function with non-empty domain $\dom(f) = \{x\in \R^m, f(x)<+\infty\}$.
Its subdifferential is denoted $\partial f$. Its Fenchel transform $f^*$ is defined by
\begin{equation*}
 f^*(y)=\sup_{x\in \R^m} \langle x,y \rangle - f(x).
\end{equation*}
If $f$ is differentiable, we let $f'\in \R^m$ denote its gradient and if it is twice differentiable, we let $f''\in \R^{m\times m}$ denote its Hessian matrix. We let $\|f'\|_\infty=\sup_{x\in \Omega} \|f'(x)\|_2$ and $\|f''\|_\infty=\sup_{x\in \Omega} \|f''(x)\|$, where $\|f''(x)\|$ is the largest singular value of $f''(x)$. 
A convex function $f$ is said to be $l$-strongly convex if 
\begin{equation}
 f(x_2)\geq f(x_1) + \langle \eta , x_2-x_1 \rangle + \frac{l}{2}\|x_2-x_1\|_2^2
\end{equation}
for all $(x_1,x_2)\in \R^m\times \R^m$ and all $\eta \in \partial f(x_1)$.
A differentiable function $f$ is said to have an $L$-Lipschitz gradient if it satisfies $\|f'(x_1)-f'(x_2)\|_2\leq L \|x_1-x_2\|_2$. This implies that
\begin{equation} \label{eq:LSmoothVariation}
 f(x_2)\leq f(x_1) + \langle f'(x_1) , x_2-x_1 \rangle +\frac{L}{2}\|x_2-x_1\|_2^2 \mbox{ for all } (x_1,x_2)\in \R^m\times \R^m.
\end{equation}
We recall the following equivalence \cite{hiriart2013convex}:
\begin{prop}\label{eq:duality_lipschitz_stronglyconvex}
Let $f:\R^m\to \R\cup\{+\infty\}$ denote a convex and closed function with non empty domain.
 Then the following two statements are equivalent:
 \begin{itemize}
  \item $f$ has an $L$-Lipschitz gradient.
  \item $f^*$ is $\frac{1}{L}$-strongly convex.
 \end{itemize}
\end{prop}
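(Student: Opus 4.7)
The equivalence is a classical result from convex duality, so the plan is to exploit the inverse-subdifferential relation $y \in \partial f(x) \iff x \in \partial f^*(y)$, valid for convex closed functions with non-empty domain, and to translate each analytic hypothesis into an equivalent monotonicity statement on $\partial f$ or $\partial f^*$. Both directions then reduce to moving a quantitative inequality across this inversion.

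For the forward direction, I would first establish the Baillon--Haddad co-coercivity inequality
\begin{equation*}
\langle f'(x_1) - f'(x_2), x_1 - x_2\rangle \geq \frac{1}{L}\|f'(x_1) - f'(x_2)\|_2^2.
\end{equation*}
This is obtained by applying the descent lemma \eqref{eq:LSmoothVariation} to the auxiliary convex $L$-smooth function $g(y) = f(y) - \langle f'(x_1), y\rangle$, which attains its minimum at $x_1$: comparing $g(x_1)$ with $g(x_2 - L^{-1} g'(x_2))$ gives one inequality, and symmetrizing the roles of $x_1$ and $x_2$ and adding produces the claim. Setting $y_i = f'(x_i)$, which is equivalent to $x_i \in \partial f^*(y_i)$, co-coercivity rewrites as $\frac{1}{L}$-strong monotonicity of the operator $\partial f^*$, which is in turn equivalent to $\frac{1}{L}$-strong convexity of $f^*$ (by integrating the monotonicity inequality along the segment $[y_1, y_2]$, or by comparing the subgradient inequality at $y_1$ with the one at $y_2$).

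For the converse, assume $f^*$ is $\frac{1}{L}$-strongly convex. Then for every $x \in \R^m$ the map $y \mapsto \langle x, y\rangle - f^*(y)$ is strictly concave (uniqueness of the maximizer) and coercive (the quadratic term $\frac{1}{2L}\|y\|_2^2$ dominates the linear perturbation, so existence holds). Combined with $f = f^{**}$ from Fenchel--Moreau, this forces $\partial f(x)$ to be a singleton for every $x$, so $f$ is differentiable on $\R^m$. The strong monotonicity of $\partial f^*$ derived from strong convexity then yields $\langle x_1 - x_2, f'(x_1) - f'(x_2)\rangle \geq \frac{1}{L}\|f'(x_1) - f'(x_2)\|_2^2$, and one application of Cauchy--Schwarz produces the Lipschitz bound $\|f'(x_1) - f'(x_2)\|_2 \leq L\|x_1 - x_2\|_2$. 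The main technical obstacle is the Baillon--Haddad step; once co-coercivity is secured, everything else is routine subdifferential calculus. An alternative route sidesteps Baillon--Haddad by instead working with the Moreau identity, or by directly conjugating the convex function $\tfrac{L}{2}\|\cdot\|_2^2 - f$ and identifying its conjugate with $f^*(\cdot) - \tfrac{1}{2L}\|\cdot\|_2^2$, but this requires extra care with the domain of $f^*$.
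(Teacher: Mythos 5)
The paper does not actually prove this proposition: it states it as a recalled fact and cites Hiriart-Urruty and Lemar\'echal for it. Your proof is therefore an addition rather than a rederivation, and it is essentially correct, following the standard route: Baillon--Haddad co-coercivity for the forward direction, and the inverse-subdifferential relation $y \in \partial f(x) \iff x \in \partial f^*(y)$ together with Fenchel--Moreau for the converse. A few points worth tightening if you were to write this out in full. In the forward direction, after obtaining co-coercivity you pass through strong monotonicity of $\partial f^*$ and then ``integrate along the segment''; this step is only immediate on $\operatorname{ran}(f') = \dom(\partial f^*)$, which contains the relative interior of $\dom f^*$ but not necessarily its boundary, so a short limiting argument is needed to reach the strong-convexity inequality for all $y_2 \in \dom f^*$ (or one can observe that when $y_2 \notin \dom f^*$ the inequality is vacuous). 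A slightly cleaner path avoids the symmetrization entirely: the one-sided inequality $f(x_2) \geq f(x_1) + \langle f'(x_1), x_2-x_1\rangle + \tfrac{1}{2L}\|f'(x_2)-f'(x_1)\|_2^2$ that you derive on the way to Baillon--Haddad, combined with the Fenchel identity $f(x_i) = \langle x_i, y_i\rangle - f^*(y_i)$ at $y_i = f'(x_i)$, yields $f^*(y_2) \geq f^*(y_1) + \langle x_1, y_2-y_1\rangle + \tfrac{1}{2L}\|y_2-y_1\|_2^2$ directly. In the converse direction, your coercivity argument correctly shows $\dom f = \R^m$ and your uniqueness argument correctly identifies $\partial f(x)$ with the singleton argmax; just note that a convex function finite on all of $\R^m$ with singleton subdifferentials is automatically continuously differentiable, which closes the step from ``singleton subdifferential'' to the existence of $f'$ that the strong-monotonicity and Cauchy--Schwarz step then uses.
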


The linear measurement operators $A$ considered in this paper can be viewed as a collection of $m$ continuous functions $(a_i)_{1\leq i\leq m}$. For $x\in \Omega$, the notation $A(x)$ corresponds to the vector $[a_1(x),\hdots, a_m(x)] \in \R^m$.

%$A: \calM(\Omega) \to \R^m$ is a linear measurement operator. 
%A triangulation $\calT$ is a partition of $\Omega$ as a set of simplices $\omega_l$. [REMOVE?]

\subsection{Existence results and duality}

In order to obtain existence and duality results, we will now make further assumptions.
\begin{assumption} \label{ass:f}
    $f: \R^m \to \R \cup \set{\infty}$ is convex and lower bounded. In addition, we assume that either $\dom(f)=\R^m$ or that $f$ is polyhedral (that is, its epigraph is a finite intersection of closed halfspaces).
\end{assumption}
\begin{assumption} \label{ass:A}
    The operator $A$ is weak-$*$-continuous. Equivalently, the \emph{measurement functionals} $a_i^*$ defined by $\sprod{a_i^*, \mu} = (A(\mu))_i$ are given by 
    \begin{align*}
        \sprod{a_i^*, \mu} = \int_{\Omega} a_i d\mu,
    \end{align*}
    for functions $a_i \in \calC_0(\Omega)$. In addition, we assume that $A$ is surjective on $\R^m$.
\end{assumption}

The following results relate the primal and the dual. 
\begin{prop}[Existence and strong duality] \label{prop:duality}
Under Assumptions \ref{ass:f} and \ref{ass:A}, the following statements are true:
\begin{itemize}
 \item The primal problem \eqref{eq:primal} and its dual \eqref{eq:dual} both admit a solution. 
 \item The following strong duality result holds
\begin{equation}
 \min_{\mu \in \calM(\Omega)} \norm{\mu}_{\calM(\Omega)} + f(A\mu) = \max_{q \in \R^m, \|A^*q\|_\infty \leq 1} - f^*(q).
\end{equation}
 \item Let $(\mu^\star,q^\star)$ denote a primal-dual pair. They are related as follows
 \begin{equation}\label{eq:primal_dual_relationship}
  A^*q^\star \in \partial_{\|\cdot\|_\calM}(\mu^\star) \mbox{ and } -q^\star\in \partial f(A\mu^\star).
 \end{equation}
\end{itemize}
\end{prop}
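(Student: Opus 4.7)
The plan is to tackle the three claims by the direct method for the primal, the Weierstrass theorem for the dual, and the Fenchel--Rockafellar duality theorem (which delivers strong duality and the optimality conditions in one go).

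For \textbf{primal existence}, I would apply the direct method of the calculus of variations in the weak-$*$ topology of $\calM(\Omega) = \calC_0(\Omega)^*$. Lower boundedness of $f$ yields $\|\mu_n\|_\calM \leq J(\mu_n) - \inf f$ along any minimizing sequence, which is bounded by assumption; Banach--Alaoglu then produces a weak-$*$ cluster point $\bar\mu$. The functional $J$ is weak-$*$ lower semicontinuous, since the total variation is a dual norm, $A$ is weak-$*$ continuous by Assumption~\ref{ass:A}, and $f$ is lsc on $\R^m$, so $\bar\mu$ attains the infimum.

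For \textbf{dual existence}, the surjectivity of $A$ is equivalent to the injectivity of the adjoint $A^* : \R^m \to \calC_0(\Omega)$, $q \mapsto \sum_i q_i a_i$. Any injective linear map from a finite-dimensional space is bounded below, so the dual feasible set $\{q \in \R^m : \|A^*q\|_\infty \leq 1\}$ is compact. The objective $-f^*$ is upper semicontinuous, since $f^*$ is convex lsc, and it is finite at $q = 0$ because $f^*(0) = -\inf f < +\infty$ by the lower boundedness of $f$. Weierstrass then yields a maximizer.

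The core step is \textbf{strong duality}, which I would derive from the Fenchel--Rockafellar duality theorem applied to the splitting $J(\mu) = h(\mu) + f(A\mu)$ with $h = \|\cdot\|_\calM$. The conjugate $h^*$ is the indicator of the unit ball of $\calC_0(\Omega)$, so unrolling produces exactly the dual \eqref{eq:dual} (up to the harmless symmetry $q \leftrightarrow -q$ of the feasible set). The qualification hypothesis comes in two flavours matching the two branches of Assumption~\ref{ass:f}: if $\dom f = \R^m$ then $f$ is continuous on $\R^m$ and the classical Rockafellar interior condition holds at $\mu = 0$; if $f$ is polyhedral, the polyhedral refinement of Fenchel--Rockafellar only requires $A(\dom h) \cap \dom f \neq \emptyset$, which is automatic since $A(\calM(\Omega)) = \R^m$ by surjectivity. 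I expect this to be the step that requires the most care, because $\calM(\Omega)$ is non-reflexive and the two branches of the assumption must be invoked separately.

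Finally, the \textbf{optimality conditions} \eqref{eq:primal_dual_relationship} follow by reading off the equality cases of the Fenchel--Young inequalities that witness strong duality, equivalently by invoking the subdifferential sum rule
\[
\partial (h + f \circ A)(\mu^\star) = \partial h(\mu^\star) + A^* \partial f(A\mu^\star),
\]
which is valid under the same qualification conditions. The inclusion $0 \in \partial J(\mu^\star)$ then produces $u \in \partial \|\cdot\|_\calM(\mu^\star)$ and $v \in \partial f(A\mu^\star)$ with $u = -A^*v$, and identifying $q^\star$ with the appropriate sign of $v$ gives both the fact that $q^\star$ is a dual maximizer and the two inclusions in \eqref{eq:primal_dual_relationship}.
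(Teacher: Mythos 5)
Your proof is correct and follows essentially the same route as the paper: direct method for primal existence, compactness of the dual feasible set (via surjectivity of $A$) plus upper semicontinuity for dual existence, a Fenchel-type duality theorem for strong duality, and first-order optimality conditions for the primal-dual inclusions. The only difference is that the paper simply cites a duality theorem of Borwein--Lewis for the strong-duality step, whereas you unfold the Fenchel--Rockafellar argument and explicitly match the two branches of Assumption~\ref{ass:f} (full-domain continuity versus polyhedrality) to the corresponding qualification conditions, which is a welcome clarification rather than a different method.
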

\begin{proof}
The stated assumptions ensure the existence of a feasible measure $\mu$. In addition, the primal function is coercive since $f$ is bounded below. This yields existence of a primal solution. The existence of a dual solution stems from the compactness of the set  $\{q \in \R^m, \|A^*q\|_\infty \leq 1\}$ (which itself follows from the surjectivity of $A$) and the continuity of $f^*$ on its domain. The strong duality result follows from \cite[Thm 4.2]{Borwein1992}. The primal-dual relationship directly derives from the first order optimality conditions.
\end{proof}

The left inclusion in equation \eqref{eq:primal_dual_relationship} plays an important role, which is well detailed in \cite{duval2015exact}. It implies that the support of $\mu^\star$ satisfies: $\supp(\mu^\star)\subseteq \{x\in \Omega, |A^*q^\star(x)|=1\}$. %In particular, if the active set $\Omega^\star = \{x\in \Omega, |A^*q^\star(x)|=1\}$ is finite, we can replace the infinite dimensional primal problem $\calP(\Omega)$ by its finite dimensional counterpart $\calP(\Omega^\star)$.

\section{An Exchange Algorithm and its convergence}

 \label{sec:exchange}
 
\subsection{The algorithm}

We assume that an initial grid $\Omega_0 \sse \Omega$ is given (e.g. a coarse Euclidean grid). 
Given a discretization $\Omega_k$, we can define a discretized primal problem \eqref{eq:discretePrimal} 
\begin{align*} \label{eq:discretePrimal}
    \tag{$\calP(\Omega_k)$} \inf_{\mu \in \calM(\Omega_k)} \norm{\mu}_{\calM} + f(A\mu),
\end{align*}
and its associated dual \eqref{eq:discreteDual}
\begin{align*}
    \tag{$\calD(\Omega_k)$}  \sup_{q \in \R^m, |A^*q(x)|\leq 1, \ \forall x\in \Omega_k} - f^*(q). \label{eq:discreteDual}
\end{align*}
%From this set, we can construct an associated triangulation $\calT_0$ consisting of simplices $(\omega_\ell)_\ell$ with vertices in $\Omega_0$.  A typical way to construct it is to use a Delaunay triangulation. For a grid $\Omega_k$ with associated triangulation $\calT_k$, we let $A_k$ denote the measurement operator with measurement functions $(a^k_i)_{i=1}^m$ defined through the following conditions:
% \begin{itemize}
%  \item $a^k_i=a_i$ on $\Omega_k$.
%  \item $a_i^k$ is linear on any triangle of $\calT_k$.
% \end{itemize}
%  
% Given a discretization $\Omega_k$, we can define a discretized primal problem \eqref{eq:discretePrimal} 
% \begin{align*} \label{eq:discretePrimal}
%     \tag{$\calP(\Omega_k)$} \inf_{\mu \in \calM(\Omega_k)} \norm{\mu}_{\calM(\Omega)} + f(A\mu)=\inf_{\mu \in \calM} \norm{\mu}_{\calM} + f(A_k\mu),
% \end{align*}
% and its associated dual \eqref{eq:discreteDual}
% \begin{align*}
%     \tag{$\calD(\Omega_k)$}  \sup_{q \in \R^m, |A^*q(x)|\leq 1, \ \forall x\in \Omega_k} - f^*(q) = \sup_{q \in \R^m, \norm{A_k^*q}_\infty \leq 1} - f^*(q). \label{eq:discreteDual}
% \end{align*}
% The ``equivalence'' between discretizing the domain $\Omega_k$ or the measurement operator $A_k$ was proven in \cite{flinth2017exact}.

 In this paper, we will investigate the exchange rule below:
 \begin{equation}\label{eq:exchange_rule}
  \Omega_{k+1}=\Omega_{k} \cup X_{k} \mbox{ where } X_k \mbox{ is defined in } \eqref{eq:defXk}.
 \end{equation}
 The implementation of this rule requires finding $X_k$, the set of  all the local maximizers of $\abs{A^*q_k}$ exceeding $1$. %This might be hard from a computational point of view, see Section \ref{sec:numerics} for a discussion on this issue.

%Section~\ref{sec:exchange} is organized as follows: In Section~\ref{sec::conv-exchange} we prove the convergence of this algorithm under mild assumption. Section~\ref{sec::exchange::assumption} is devoted to introducing the assumptions needed for obtaining a convergence rate of the algorithm, Section~\ref{sec::exchange::auxiliary} introduces some preliminary lemmata, Section~\ref{sec::exchange::fixed:grids} details fixed grid estimates and finally Theorem~\ref{th:MainExchange}  in Section~\ref{sec::exchange:linear} proves the eventual linear convergence rate of the  algorithm.

\subsection{A generic convergence result}
\label{sec::conv-exchange}
The exchange algorithm above converges under quite weak assumptions. 
For instance, it is enough to assume that the function $f$ is differentiable.
\begin{assumption} \label{ass:grid}
  The data fitting function $f:\R^m \to \R$ is differentiable with $L$-Lipschitz continuous gradient.
\end{assumption}

Alternatively, we may assume that the initial set $\Omega_0$ is fine enough,  which in particular implies that $|\Omega_0|\geq m$.
\begin{assumption} \label{ass:grid2}
  The initial set $\Omega_0$ is such that $A$ restricted to $\Omega_0$ is surjective.
\end{assumption}

We may now present and prove our first result.

\begin{theo}[Generic convergence\label{thm:generic_convergence}]
    Under assumptions \ref{ass:f}, \ref{ass:A} and  \ref{ass:grid} or \ref{ass:grid2}, a subsequence of $(\mu_k,q_k)$ will converge in the weak-$*$-topology towards a solution pair $(\mu^\star,q^\star)$ of \eqref{eq:primal} and \eqref{eq:dual}, as well as in objective function value. If the solution of \eqref{eq:primal} and/or \eqref{eq:dual} is unique, the entire sequence will converge.
\end{theo}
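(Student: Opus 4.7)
The plan is to combine the monotonicity of the restricted problems, together with the fact that the exchange rule eventually rules out any constraint violation in the limit, to identify subsequential limits with genuine primal-dual optima.

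I would first record two monotonicity facts. Since $\Omega_k\subseteq\Omega_{k+1}$, the primal feasible sets $\calM(\Omega_k)$ are nested upwards and the dual feasible sets $\{q:|A^*q(x)|\leq 1,\ \forall x\in\Omega_k\}$ are nested downwards. Applying Proposition \ref{prop:duality} to each pair $(\calP(\Omega_k),\calD(\Omega_k))$, strong duality gives $J(\mu_k)=-f^*(q_k)$, and this value forms a non-increasing sequence bounded below by $J(\mu^\star)$, so it converges to some $\bar J\geq J(\mu^\star)$. Next I would establish boundedness. Under Assumption \ref{ass:grid2}, surjectivity of $A$ restricted to $\Omega_0$ makes the dual feasible polytope at step $0$ compact in $\R^m$, so $(q_k)$ is bounded. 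Under Assumption \ref{ass:grid}, Proposition \ref{eq:duality_lipschitz_stronglyconvex} yields strong convexity of $f^*$, and the bound $f^*(q_k)\leq f^*(q^\star)$ traps $(q_k)$ in a bounded sublevel set of $f^*$. The inequality $\|\mu_k\|_{\calM}\leq J(\mu_k)-\inf f$ then gives boundedness of $(\mu_k)$ in total variation. Extracting subsequences, $q_{k_j}\to\bar q$ in $\R^m$ and $\mu_{k_j}\wstarto\bar\mu$ in $\calM(\Omega)$ by Banach-Alaoglu.

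The main technical obstacle is to show that $\bar q$ is dual feasible for the original, non-discretized problem \eqref{eq:dual}. I would argue by contradiction: suppose $\|A^*\bar q\|_\infty\geq 1+3\epsilon$ for some $\epsilon>0$. Since $q\mapsto A^*q$ is a bounded linear map from $\R^m$ into $\calC_0(\Omega)$, the convergence $q_{k_j}\to\bar q$ upgrades to uniform convergence $A^*q_{k_j}\to A^*\bar q$ on $\bar\Omega$, so for $j$ large $\|A^*q_{k_j}\|_\infty\geq 1+2\epsilon$. Because $a_i\in\calC_0(\Omega)$, the function $|A^*q_{k_j}|$ vanishes at $\partial\Omega$, hence attains its global maximum at an interior local maximizer $\hat x_{k_j}\in X_{k_j}\subseteq\Omega_{k_j+1}$. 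Dual feasibility of every later iterate then forces $|A^*q_{k'}(\hat x_{k_j})|\leq 1$ for all $k'\geq k_j+1$; passing to the limit along the convergent sub-subsequence, $|A^*\bar q(\hat x_{k_j})|\leq 1$. Extracting a further sub-subsequence $\hat x_{k_j}\to\hat x\in\bar\Omega$ and invoking joint continuity of $(q,x)\mapsto|A^*q(x)|$, the limit satisfies $|A^*\bar q(\hat x)|\leq 1$ and $|A^*\bar q(\hat x)|\geq 1+2\epsilon$, a contradiction that also excludes $\hat x\in\partial\Omega$ (where $A^*\bar q$ would vanish).

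Once $\bar q$ is known to be dual feasible for \eqref{eq:dual}, strong duality of the global problem gives $\bar J=-f^*(\bar q)\leq J(\mu^\star)$; combined with $\bar J\geq J(\mu^\star)$ this shows $\bar q$ is dual optimal, and also forces $\bar J=J(\mu^\star)$. For the primal, weak-$*$ lower semi-continuity of $\|\cdot\|_{\calM}$ and lower semi-continuity of $f$, applied to $A\mu_{k_j}\to A\bar\mu$ in $\R^m$ (a consequence of $a_i\in\calC_0(\Omega)$), yield $J(\bar\mu)\leq\liminf_j J(\mu_{k_j})=J(\mu^\star)$, so $\bar\mu$ is a primal optimum. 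Finally, uniqueness of $(\mu^\star,q^\star)$ promotes subsequential to full convergence by the standard argument: every subsequence of $(\mu_k,q_k)$ then admits a further subsequence converging to the unique limit, so the whole sequence does.
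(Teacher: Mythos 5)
Your argument is correct and follows the same overall architecture as the paper's proof: monotonicity of the discretized values, boundedness of $(q_k)$ via the two alternative assumptions, subsequence extraction, dual feasibility of the limit by a contradiction argument exploiting the exchange rule, and then weak duality together with lower semicontinuity to conclude optimality, followed by the standard subsequence argument for uniqueness. The one place where you proceed a bit differently is the dual-feasibility step: the paper selects, at each iteration, the global maximizer of $|A^*q_k|$ that is farthest from $\Omega_k$ and then invokes \emph{equicontinuity} of the family $(A^*q_k)_k$ (uniform in $k$) to derive a contradiction, whereas you pick an arbitrary global maximizer $\hat x_{k_j}$, use that $\hat x_{k_j}\in\Omega_{k'}$ for all $k'\geq k_j+1$ to deduce $|A^*\bar q(\hat x_{k_j})|\leq 1$ in the limit $k'\to\infty$, and then pass to the limit in $j$ using \emph{uniform convergence} $A^*q_{k_j}\to A^*\bar q$ in $\calC_0(\Omega)$. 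Both routes rely on the same compactness of $\bar\Omega$ and the same structural fact that every added maximizer is feasible for all subsequent dual problems; your version is perhaps slightly more streamlined since it works directly with the limit function rather than with a uniform modulus of continuity. One small imprecision: the identity $\bar J=-f^*(\bar q)$ does not follow from ``strong duality of the global problem'' alone; it comes from sandwiching $J(\mu^\star)\leq\bar J\leq -f^*(\bar q)\leq J(\mu^\star)$ (the middle inequality by lower semicontinuity of $f^*$, the last by feasibility of $\bar q$). The conclusion you draw is correct, but the justification as written skips that chain.
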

\begin{proof}
 First remark that the sequence $(\|\mu_k\|_\calM +f(A\mu_k))_{k \in \N}$ is non-increasing since the spaces $\calM(\Omega_k)$ are nested. Due to the boundedness below of $f$, the same must be true for $(\norm{\mu_k}_\calM)$. Hence there exists a subsequence $(\mu_k)$, which we do not relabel, that weak-$*$ converges towards a measure $\mu_\infty$. 

Now, we will prove that the sequence of dual variables $(q_k)_{k\in \N}$ is bounded. 
If Assumption \ref{ass:grid} is satisfied, then $f^*$ is strongly convex and since $0$ is a feasible point, we must have $q_k\in \{q\in \R^m, f^*(q)\leq f^*(0)\}$, which is bounded. 
Alternatively, if Assumption \ref{ass:grid2} is satisfied, notice that $1\geq \|A_k^* q_k\|_\infty \geq \|A_0^* q_k\|_\infty$. Since $A_0$ is surjective, the previous inequality implies that $(\|q_k\|_2)_{k \in \N}$ is bounded. Hence, in both cases, the sequence $(q_k)_{k\in\N}$ converges up to a subsequence to a point $q_\infty$.

%If $\norm{A^*q_k}_\infty \leq 1$ for some $k$, we will not add points to the grid, so that $q_{k'}=q_k$ for all $k'\geq k$, and in particular $\norm{A^*q_\infty}_\infty = \norm{A^*q_k}_\infty \leq 1$. 

The key is now to  prove that $\|A^*q_\infty\|_\infty \leq 1$. To this end, let us first argue that the family $(A^*q_k)_{k\in \N}$ is equicontiuous. For this, let $\epsilon>0$ be arbitrary. Since the functions $a_i \in \calC_0(\Omega)$ all are uniformly continuous, there exists a $\delta>0$ with the property
\begin{align*}
    \norm{x-y}_2 < \delta \, \Rightarrow \, \abs{a_i(x)-a_i(y)} < \frac{\epsilon}{\sup_{k} \norm{q_k}_1} \text{ for all } i.
\end{align*}
Consequently,
\begin{align}
 \norm{x-y}_2 < \delta \, \Rightarrow \, \abs{(A^*q_k)(x)- (A^*q_k)(y)} &= \abs{\sum_{i=1}^m (a_i(x)-a_i(y))q_k(i)} \leq \sum_{i=1}^m \abs{a_i(x)-a_i(y)}\abs{q_k(i)} \nonumber \\
 &< \frac{\epsilon}{\sup_{k} \norm{q_k}_1} \sum_{i=1}^m \abs{q_k(i)} \leq \epsilon. \label{eq:uniformcont}
\end{align}
%That is equicontinuity. (We could also just envoke Arzel\`{a}-Ascoli: $(A^*q_k)_{k\in \N}$ is, being a convergent sequence, relatively compact (as a set) in $\calC_0(\Omega)$...)

Due to the convergence of $(q_k)_{k \in \N}$, the sequence $(A^*q_k)_{k \in \N}$ is converging strongly to $A^*q_\infty$. We will now prove that $\|A^*q_\infty\|_\infty \leq 1$. If for some $k$, $\norm{A^*q_k}_\infty \leq 1$, we will have $A^*q_\ell = A^*q_k$ for all $\ell \geq k$, and in particular $q_\infty = q_k$ and thus $\norm{A^*q_\infty}\leq 1$. Hence, we may assume that $\norm{A^*q_k}_\infty >1$ for each $k$, i.e. that we add at least one point to $\Omega_k$ in each iteration.

Now, towards a contradiction, assume that $\norm{A^*q_\infty}_\infty =1 + 2\epsilon$ for an $\epsilon>0$. Set $\delta$ as in \eqref{eq:uniformcont}. For each $k \in \N$, let $x_k^\star$ be the element in  $\argmax_x \abs{(A^*q_k)(x)}$ which has the largest distance to $\Omega_k$. Due to $a_\ell \in \calC_0(\Omega)$ for each $k$, there needs to exist a compact subset $C \sse \Omega$ such that $(x_k^\star)_k \sse C$. Indeed, there exists for each $\ell=1, \dots, m$ a $C_\ell$ such that $\abs{a_\ell(x)}\leq (\sup_{k} \norm{q_k}_1)^{-1}$ for all $x \notin C_\ell$. Now, if $x \notin C\eqdef \bigcup_{\ell=1}^m C_\ell$, we get
\begin{align*}
 \abs{A^*q_k(x)} =\abs{\sum_{i=1}^m a_i(x)q_k(i)} \leq \sum_{i=1}^m \abs{a_i(x)}\abs{q_k(i)} \nonumber < \frac{1}{\sup_{k} \norm{q_k}_1} \sum_{i=1}^m \abs{q_k(i)} \leq 1
\end{align*}
for every $k$. Since $\abs{A^*q_k(x_k^\star)}>1$, we conclude $(x_k^\star)_k \sse C$. Consequently, a subsequence (which we do not rename) of $(x_k^\star)$ must converge. Thus, for some $k_0$ and every $k > k_0$, we have $\norm{x_k^\star- x_{k_0}^\star}_2 < \delta$. We then have
\begin{align*}
    \norm{A^*q_k}_\infty = \abs{(A^*q_k)(x_k^\star)} < \abs{(A^*q_k)(x_{k_0}^\star)} + \epsilon \leq 1+ \epsilon.
\end{align*}
In the last estimate, we used the constraint of \eqref{eq:discreteDual} and the fact that $x_{k_0}^\star \in \Omega_k$. Since the last inequality holds for every $k\geq k_0$, we obtain
\begin{align*}
    \norm{A^*q_\infty}_\infty = \lim_{k \to \infty} \norm{A^*q_k}_\infty \leq 1+ \epsilon,
\end{align*}
where we used the fact that $(A^*q_k)_k$ converges strongly towards $A^*q_\infty$. This is a contradiction, and hence, we do have $\norm{A^*q_\infty}_\infty \leq 1$.

% 
% 
% Let us define
% \begin{equation}
% \bar \Omega=\overline{\cup_{k\in \N} \Omega_k}. 
% \end{equation}
% We have $|A^*q^\star(x)|\leq 1$ for $x\in \bar \Omega$. To see this, take a point $x_j\in \Omega_{k_0}$, for an arbitrary $k_0\in \N$. We then have, due to the nestedness of the $\Omega_k$ and the constraint $\|A^*q_k|_{\Omega_k}\|_\infty\leq 1$, that $|A^*q_k (x_j)|\leq 1$ for all $k\geq k_0$. Consequently, 
% \begin{align*}
%  |A^*q^\star(x_j)| = \lim_{k\to \infty} |A^*p_k(x_j)| \leq 1.
% \end{align*}
% We can then take the closure by continuity of the functions $a[i]$.
% 
% Now assume that there exists a point $x\in \Omega$ such that $\|A^*q^\star(x)\|>1$. Then, there must exist an integer $k_0$ such that $\|A^*q_k(x)\|>1$ for all $k\geq k_0$. Let $\gamma_k(x)$ be any cell in $\calT_k$ containing $x$\footnote{In general, only one triangle contains $x$ except in the unlikely situation where $x$ belongs to an edge (or a face of lower dimension). To make a complexity analysis in dimension 2 or more, we will probably need to control those situations. [THIS IS RESOLVED???]}. The algorithm would then add points in such a way that $\diam(\gamma_{k+1}(x)) \leq \theta \cdot \diam(\gamma_{k}(x))$. Since this property is true for any $k\geq k_0$, $\lim_{k\to \infty} \diam(\gamma_{k}(x))=0$ and $x\in \bar \Omega$. This is a contradiction.

Overall, we proved that the primal-dual pair $(\mu_\infty,q_\infty)$ is feasible. It remains to prove that it is actually a solution.
To do this, let us first remark that $\norm{\mu_\infty}_\calM + f(A\mu_\infty) \geq - f^*(q_\infty)$ by weak duality. To prove the second inequality, first notice that the weak-$*$-continuity of $A$ implies that $A\mu_k \to A\mu_\infty$. Assumption \ref{ass:f} furthermore implies that $f$ is lower semi-continuous. As a supremum of linear functions, so is $f^*$. Since also $q_k \to q_\infty$, we conclude
\begin{align*}
    f^*(q_\infty) + f(A\mu_\infty) \leq \liminf_{k \to \infty} f^*(q_k) + f(A\mu_k).
\end{align*}
Assumptions \ref{ass:f}, \ref{ass:A} together with Proposition \ref{prop:duality} imply exact duality of the discretized problems. This means $f^*(q_k) + f(A\mu_k) = -\norm{\mu_k}_\calM$. Since the norm is weak-$*$-l.s.c. , we thus obtain
\begin{align*}
    \liminf_{k \to \infty} f^*(q_k) + f(A\mu_k) = \liminf_{k \to \infty}  - \norm{\mu_k}_\calM \leq - \liminf_{k \to \infty} \norm{\mu_k}_\calM \leq -\norm{\mu_\infty}_\calM.
\end{align*}
Reshuffling these inequalities yields $\norm{\mu_\infty}_\calM + f(A\mu_\infty) \leq - f^*(q_\infty)$, i.e., the reverse inequality.
Thus, $\mu_\infty$ and $q_\infty$ fulfill the duality conditions, and are solutions. The final claim follows from a standard subsequence argument.
\end{proof}

\begin{rem}
Let us mention that the convergence result in Theorem \ref{thm:generic_convergence} and its proof, is not new, see e.g. \cite{reemtsen1990modifications}. The proof technique can be applied to prove similar statements for other refinement rules. 
For instance, the result still holds if we add the single most violating point:
 \begin{equation}
  \Omega_{k+1} \supseteq \Omega_k \cup \{x_k\} \mbox{ with } x_k \in \argmax_{x\in \Omega} |A^*q_k|.
 \end{equation} 
% For instance, the result still holds if we replace the refinement rule by
%  \begin{equation}
%   \Omega_{k+1} \supseteq \Omega_k \cup \{x_k\}
%  \end{equation} 
%  with $x_k \in \argmax_{x\in G_{k+1}} |A^*q_k|$, where $(G_k)_k$ is a nested sequence of grids whose union is dense in $\Omega$, see \cite{reemtsen1991discretization}. This latest rule is simpler to implement since it only requires to assess a \emph{discrete} $\argmax$. However, the complexity of computing this $\argmax$ gets larger as $G_k$ gets finer. 
 \end{rem}
 
 The result that we have just shown is very generally applicable. It however does not give us any knowledge of the convergence rate. The next section will be devoted to proving a linear convergence rate in a significant special case.

% Main results

\subsection{Non degenerate source condition}
\label{sec::exchange::assumption}
%A priori, we could add densely

The idea behind adding points to the grid adaptively is to avoid a uniform refinement, which results in computationally expensive problems \eqref{eq:discreteDual}. However, there is a priori no reason for the exchange rule not to refine in a uniform manner.
%this: the local maximizers $\abs{A^*q_k}$ exceeding $1$ could 'move around' in $\Omega$ in an arbitrary fashion.
In this section, we prove that additional assumptions improve the situation. 
First, we will from now on work under Assumption \eqref{ass:grid}. It implies that the dual solutions $q_k$ are unique for every $k$, since Proposition \eqref{eq:duality_lipschitz_stronglyconvex} ensures the strong convexity of the Fenchel conjugate $f^*$. We furthermore assume that the $a_j$ are smooth.

\begin{assumption}[Assumption on the measurement functionals \label{ass:regf}]
The measurement functions $a_j$ all belong to $\calC_0^2(\Omega) \eqdef \calC_0(\Omega) \cap \calC^2(\Omega)$ and their first and second order derivatives are uniformly bounded on $\Omega$. We hence may define
\begin{align*}
     \kappa \eqdef \sup_{\|q\|_2\leq 1} \norm{A^*q}_\infty= \sup_{x\in \Omega} \norm{A(x)}_2, \quad \kappa_\nabla \eqdef   \sup_{\|q\|_2\leq 1} \norm{(A^*q)'}_\infty, \quad \kappa_{\hess} \eqdef  \sup_{\|q\|_2\leq 1} \norm{(A^*q)''}_\infty.
\end{align*}
\end{assumption}

We also assume the following regularity condition on the solution $q^\star$ of \eqref{eq:dual}, and its corresponding primal solution $\mu^\star$.
\begin{assumption}[Assumption on the primal-dual pair] \label{ass:dualCert}
We assume that \eqref{eq:primal} admits a unique $s$-sparse solution $\mu^\star$ supported on $\xi= (\xi_i)_{i=1}^s\in \Omega^s$:
\begin{equation}
 \mu^\star = \sum_{i=1}^s \alpha_i^\star \delta_{\xi_i}.
\end{equation}
Let $q^\star$ denote the associated dual pair. We assume that the only points $x$ for which $\abs{A^*q^\star(x)}=1$ are the points in $\xi$, and that the second derivative of  $\abs{A^*q^\star}$ is negative definite in each point $\xi_i$. It follows that there exists $\tau_0>0$ and $\gamma>0$ such that
    \begin{align} \label{eq:reg}
    |A^*q^\star|''(x) \preccurlyeq - \gamma \Id \text{ and } |A^*q^\star|(x) \geq \frac{\gamma\tau_0^2}{2} &\text{ for $x$ with } \dist(x, \xi) \leq \tau_0. \\
\abs{(A^*q^\star)(x)} \leq 1- \frac{\gamma \tau_0^2}{2} \ &\text{ for $x$ with } \dist(x, \xi) \geq \tau_0. \label{eq:reg2}
    \end{align}
    We note that  if Equations~\eqref{eq:reg} and \eqref{eq:reg2} are valid for some $(\gamma,\tau_0)$, they are also valid for any $(\tilde \gamma,\tilde \tau_0)$ with
   $\tilde \gamma \le \gamma$ and $\tilde \tau_0 \le \tau_0$. 
\end{assumption}

Assumption \eqref{ass:dualCert} may look very strong and hard to verify in advance. 
Recent advances in signal processing actually show that it is verified under clear geometrical conditions. First, there will always exists at most $m$-sparse solutions to problem \eqref{eq:primal}, \cite{Zuhovickii1948,fisher_spline_1975,boyer2018representer}. 
Therefore, the main difficulty comes from the uniqueness of the primal solution and from the two regularity conditions \eqref{eq:reg} and \eqref{eq:reg2}. These assumptions are called \emph{non-degenerate source condition} of the \emph{dual certificate $A^*q^\star$} \cite{duval2015exact}. Many results in this direction have been shown for $f= \xi_{\set{b}}$ or $f(\cdot)=\frac{L}{2}\|\cdot - b\|_2^2$, where $b=A\mu_0$ with $\mu_0$ a finitely supported measure. The papers \cite{candes2014towards, tang2013compressed,dossal2017sampling} deal with different Fourier-type operators, \cite{bodmann2018compressed} about a few other special cases whereas \cite{poon2018support} provides an analysis for arbitrary integral operators sampled at random.

% Common for all these considerations is that the measure obeys a separation condition on the ground truth measure $\mu_0$, and that the \emph{kernel function}
% \begin{align*}
%     K(x,y) = \sum_{i=1}^m a_i(x)a_i(y)
% \end{align*}
% is highly concentrated along the line $x=y$. 
% %The careful reader has probably already noticed that $f= \chi_{\set{b}}$ is not a smooth function. However, carefully examining the mentioned works reveals that they all provide arguments for the \emph{minimal $\ell_2$-norm certificate} to obey estimates such as \eqref{eq:reg}. This is enough to prove that the BLASSO ($f= \frac{1}{2\lambda} \norm{A\mu-b}_2^2$) for small $\lambda$ has similar properties -- this is one of the main findings in \cite{duval2015exact}. 
% Although concrete results are not known for other data fitting functions $f$, we choose to carry out our analysis in this generality, mainly because the exposition of the arguments get more transparent.

%If (dual) solution is nice, we will not

\subsection{Auxiliary results}
\label{sec::exchange::auxiliary}
In this and the following sections, we always work under Assumptions \ref{ass:f}, \ref{ass:A}, \ref{ass:grid}  without further notice. We derive several lemmata that are direct consequences of the above assumptions. The first two rely strongly on the Lipschitz regularity of the gradient of $f$.
\begin{lem}[Boundedness of the dual variables \label{lem:boundedness_dual}]
Let $\bar q=\argmin_{q\in \R^m} f^*(q)$ denote the prox-center of $f^*$. For all $k\in \N$, we have 
\begin{equation}
\|q_k\|_2\leq \sqrt{2L (f^*(0)-f^*(\bar q))} + \|\bar q\|_2 \eqdef R. 
\end{equation}
\end{lem}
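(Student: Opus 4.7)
The plan is to exploit two observations: (i) the point $q=0$ is feasible for every discretized dual problem \eqref{eq:discreteDual}, since $A^*0=0$ trivially satisfies $\|A^*q\|_\infty\le 1$; (ii) under Assumption \ref{ass:grid}, Proposition \ref{eq:duality_lipschitz_stronglyconvex} tells us that $f^*$ is $\frac{1}{L}$-strongly convex, so $\bar q=\argmin f^*$ is well defined and unique.

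First I would note that $q_k$ is the unique maximizer of $-f^*$ over the feasible set of \eqref{eq:discreteDual}; since $0$ lies in this set, we immediately get
\begin{equation*}
    f^*(q_k)\le f^*(0).
\end{equation*}
Next I would invoke the strong convexity inequality for $f^*$ centered at its global minimizer $\bar q$. Because $0\in\partial f^*(\bar q)$, $\frac{1}{L}$-strong convexity yields
\begin{equation*}
    f^*(q_k)\ge f^*(\bar q)+\frac{1}{2L}\|q_k-\bar q\|_2^2.
\end{equation*}
Combining these two inequalities gives
\begin{equation*}
    \|q_k-\bar q\|_2^2\le 2L\bigl(f^*(q_k)-f^*(\bar q)\bigr)\le 2L\bigl(f^*(0)-f^*(\bar q)\bigr),
\end{equation*}
and then the triangle inequality $\|q_k\|_2\le \|q_k-\bar q\|_2+\|\bar q\|_2$ yields the claimed bound $R$.

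There is no real obstacle here: the lemma is essentially a textbook consequence of strong convexity of the Fenchel conjugate, which is the translation (via Proposition \ref{eq:duality_lipschitz_stronglyconvex}) of the Lipschitz gradient hypothesis on $f$. The only thing to be careful about is that the argument does not use anything specific to the subgrid $\Omega_k$ — it only uses that $0$ is feasible, which holds uniformly in $k$, so the resulting bound $R$ is independent of $k$ and the iterates $(q_k)$ indeed remain in a fixed ball, as needed for the subsequent compactness and convergence analysis.
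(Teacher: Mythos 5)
Your proof is correct and follows essentially the same route as the paper: feasibility of $q=0$ gives $f^*(q_k)\le f^*(0)$, strong convexity of $f^*$ (from Proposition \ref{eq:duality_lipschitz_stronglyconvex}) centered at the minimizer $\bar q$ gives $f^*(q_k)\ge f^*(\bar q)+\frac{1}{2L}\|q_k-\bar q\|_2^2$, and the triangle inequality finishes. Nothing to add.
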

\begin{proof}[Proof of Lemma \ref{lem:boundedness_dual}]
  For all $k\in \N$, we have $0\in \{q\in \R^m, \|A^*_k q\|_\infty \leq 1\}$, hence $f^*(q_k)\leq f^*(0)$. By strong convexity of $f^*$ and optimality of $\bar q$ and $q_k$, we get:
\begin{equation}
 f^*(0)\geq f^*(q_k) \geq f^*(\bar q)+\frac{1}{2L}\|q_k-\bar q\|_2^2.
\end{equation}
Therefore $\|q_k-\bar q \|_2\leq \sqrt{2L (f^*(0)-f^*(\bar q))}$ and the conclusion follows from a triangle inequality.
\end{proof}

%The next proposition is a general assertion on constrained minimization of convex function with Lipschitz continuous gradient. 
\begin{prop} \label{prop:supvsq_raw}
Let $q^\star$ be the solution of \eqref{eq:dual}. 
Let 
\begin{equation*}
\rho\eqdef \sqrt{\sup_{w\in \partial f^*(q^\star)} -L \sprod{w,q^\star}}. 
\end{equation*}
Then for any $q$, we have
 \begin{align*}
    f^*(q^\star)-f^*(q) + \frac{1}{2L} \norm{q-q^\star}_2^2 \leq \rho^2 L^{-1}(\sup_{x\in \xi} |A^*q|(x) -1).
 \end{align*}

\end{prop}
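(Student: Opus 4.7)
The plan is to combine the $1/L$-strong convexity of $f^\ast$ (which holds by Proposition~\ref{eq:duality_lipschitz_stronglyconvex} under Assumption~\ref{ass:grid}) with a carefully chosen subgradient extracted from the primal-dual optimality conditions \eqref{eq:primal_dual_relationship}. Strong convexity applied at the base point $q^\star$ with any $w \in \partial f^\ast(q^\star)$ yields, for every $q \in \R^m$,
\begin{equation*}
 f^\ast(q^\star) - f^\ast(q) + \frac{1}{2L}\|q-q^\star\|_2^2 \leq \langle w, q^\star - q\rangle = \langle w, q^\star\rangle - \langle w, q\rangle.
\end{equation*}

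The key step is to choose the specific $w$ coming from Fenchel inversion of the second condition in \eqref{eq:primal_dual_relationship}, which produces (up to sign bookkeeping) a subgradient of the form $w = \pm A\mu^\star \in \partial f^\ast(q^\star)$. For this $w$, the other extremality relation $A^\ast q^\star \in \partial \|\cdot\|_\calM(\mu^\star)$ --- which pins $A^\ast q^\star(\xi_i) = \pm\sgn(\alpha_i^\star)$ on the support and forces $\|A^\ast q^\star\|_\infty \leq 1$ everywhere --- gives, once the two signs are tracked consistently,
\begin{equation*}
\langle w, q^\star\rangle = -\|\mu^\star\|_\calM,
\end{equation*}
so that $-L\langle w, q^\star\rangle = L\|\mu^\star\|_\calM$. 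Since $w\in\partial f^\ast(q^\star)$, this value is attained in the supremum defining $\rho$; in the applications of interest $\partial f^\ast(q^\star)$ is single-valued (e.g.\ whenever $f^\ast$ is differentiable at $q^\star$, as for Beurling LASSO), yielding the exact identity $\rho^2 = L\|\mu^\star\|_\calM$.

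The cross term is handled by the triangle inequality together with the characterisation of the $\calM$-norm: writing $\mu^\star = \sum_{i=1}^s \alpha_i^\star \delta_{\xi_i}$,
\begin{equation*}
|\langle w, q\rangle| = \Bigl|\sum_{i=1}^s \alpha_i^\star (A^\ast q)(\xi_i)\Bigr| \leq \sum_{i=1}^s|\alpha_i^\star|\,|(A^\ast q)(\xi_i)|\leq \|\mu^\star\|_\calM \sup_{x \in \xi}|A^\ast q|(x) = \frac{\rho^2}{L}\sup_{x\in\xi}|A^\ast q|(x).
\end{equation*}
Substituting this bound together with the identity $\langle w, q^\star\rangle = -\rho^2/L$ back into the strong convexity bound gives
\begin{equation*}
 f^\ast(q^\star) - f^\ast(q) + \frac{1}{2L}\|q-q^\star\|_2^2 \leq -\frac{\rho^2}{L} + \frac{\rho^2}{L}\sup_{x\in\xi}|A^\ast q|(x) = \frac{\rho^2}{L}\Bigl(\sup_{x\in\xi}|A^\ast q|(x)-1\Bigr),
\end{equation*}
which is exactly the claimed inequality.

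The main technical subtlety is the sign bookkeeping: one must verify that the subgradient $w=\pm A\mu^\star$ arising from Fenchel inversion genuinely lies in $\partial f^\ast(q^\star)$ (rather than in $\partial f^\ast(-q^\star)$), and that the resulting $\langle w, q^\star\rangle$ is indeed negative, so that $\rho$ is a well-defined real number under the square root in its definition.
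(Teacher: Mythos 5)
The overall strategy — invoke the $1/L$-strong convexity of $f^\ast$ at $q^\star$ with a subgradient $w\in\partial f^\ast(q^\star)$ coming from the primal--dual optimality conditions, then bound the cross term via the total variation of $\mu^\star$ — is in essence the same as the paper's. The paper packages it as a separating-hyperplane argument (the sublevel set $M=\{f^\ast\leq f^\ast(q^\star)\}$ and the set $D=\{q:\sup_{x\in\xi}|A^\ast q|(x)\leq 1\}$ meet only at $q^\star$), which avoids ever naming $w$ explicitly; you instead identify it concretely as (up to sign) $A\mu^\star$. The two bounds on the cross term are equivalent: your $|\langle w,q\rangle|\leq\|\mu^\star\|_\calM\sup_{x\in\xi}|A^\ast q|(x)$ is exactly the paper's containment $(1+\epsilon)D\subset\{\langle w,q-q^\star\rangle\geq\epsilon\langle w,q^\star\rangle\}$ written out. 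So the route is not genuinely different, just more explicit.

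However, the two subtleties you flag are genuine and neither is fully closed, so the proof as written has gaps.

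\textbf{The sign of $w$.} You correctly sense that Fenchel inversion of the second relation in \eqref{eq:primal_dual_relationship} does not immediately give $\pm A\mu^\star\in\partial f^\ast(q^\star)$. In fact, $-q^\star\in\partial f(A\mu^\star)$ is equivalent to $A\mu^\star\in\partial f^\ast(-q^\star)$, which is a statement about the subdifferential at $-q^\star$, not at $q^\star$; there is no general rule that would transport it across the sign change. The resolution is that the relations in \eqref{eq:primal_dual_relationship}, read against the dual \eqref{eq:dual}, should actually be $-A^\ast q^\star\in\partial\|\cdot\|_\calM(\mu^\star)$ and $q^\star\in\partial f(A\mu^\star)$ (one can check this directly against the strong-duality identity $\|\mu^\star\|_\calM+f(A\mu^\star)+f^\ast(q^\star)=0$: Fenchel's inequality forces $q^\star\in\partial f(A\mu^\star)$, not $-q^\star$). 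With the corrected signs, Fenchel inversion gives $A\mu^\star\in\partial f^\ast(q^\star)$, and $\langle A\mu^\star,q^\star\rangle=\langle\mu^\star,A^\ast q^\star\rangle=-\|\mu^\star\|_\calM$, exactly what you need. You should either carry out this re-derivation explicitly or argue from the KKT conditions of the dual problem $\calD(\xi)$, whose normal cone at $q^\star$ is spanned by the vectors $\sgn(\alpha_i^\star)A(\xi_i)$. As it stands, the phrase ``up to sign bookkeeping'' leaves the central identification of $w$ unproved.

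\textbf{The identity $\rho^2=L\|\mu^\star\|_\calM$.} Assuming $\partial f^\ast(q^\star)$ is a singleton is an extra hypothesis not implied by the standing Assumption~\ref{ass:grid} (Lipschitz gradient of $f$ does not make $\nabla f$ injective). You don't actually need it. Since $A\mu^\star\in\partial f^\ast(q^\star)$, the definition of $\rho$ as a supremum immediately gives $\rho^2\geq -L\langle A\mu^\star,q^\star\rangle=L\|\mu^\star\|_\calM$. Your chain of inequalities then reads
\begin{align*}
f^\ast(q^\star)-f^\ast(q)+\tfrac{1}{2L}\|q-q^\star\|_2^2\;\leq\;\|\mu^\star\|_\calM\Bigl(\sup_{x\in\xi}|A^\ast q|(x)-1\Bigr)\;\leq\;\tfrac{\rho^2}{L}\Bigl(\sup_{x\in\xi}|A^\ast q|(x)-1\Bigr),
\end{align*}
where the last step uses $\|\mu^\star\|_\calM\leq\rho^2/L$ \emph{and} $\sup_{x\in\xi}|A^\ast q|(x)\geq 1$. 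This matches the regime in which the proposition is applied in Lemmas~\ref{lem:SupVsq} and~\ref{lem:gridVsQ} (there $q=q_k$ with $q^\star$ feasible for the discretized dual, so the value at $\xi$ is at least $1$). The paper's own proof is also effectively restricted to this regime through its use of $q\in M$. So rather than invoking single-valuedness of $\partial f^\ast(q^\star)$, state the restriction $\sup_{x\in\xi}|A^\ast q|(x)\geq 1$ (or $f^\ast(q)\leq f^\ast(q^\star)$) and drop the appeal to the exact identity.
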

\begin{proof} Let $M=\{q\in\R^m, f^*(q)\leq f^*(q^\star)\}$ denote the sub-level set of $f^*$ and $
    D = \set{q\in \R^n \, \vert \, \sup_{x\in \xi} |A^*q|(x) \leq 1}$. We first claim that $M$ and $D$ only have the point $q^\star$ in common. Indeed $\mu^\star$ solves the problem $\mathcal P(\xi)$ and by strong duality of the problem restricted to $\calM(\xi)$, $q^\star$ solves $\mathcal D(\xi)$. 
By strong convexity of $f$, $q^\star$ is the unique solution $\mathcal D(\xi)$, this exactly means $M\cap D= \{q^\star\}$.
%    To prove this, consider the following problem
%    \begin{equation*}
%    \sup_{q\in \R^m, |A^*q(x)|\leq 1 \ \forall x\in \xi} -f^*(q).
%    \end{equation*}
%    If $q^\star$ solves this problem, it implies by strong convexity of $f^*$ that it is the unique solution. This exactly means $M\cap D= \{q^\star\}$. By definition of $q^\star$, we have
%    \begin{equation*}
%     -f^*(q^\star)=\sup \eqref{eq:dual}=\inf \eqref{eq:primal} =  \inf_{\mu \in \calM(\xi)} J(\mu)=\sup_{q\in \R^m, |A^*q(x)|\leq 1 \ \forall x\in \xi} -f^*(q),
%    \end{equation*}
%    by strong duality of the problem restricted to $\calM(\xi)$. This proves the claim.

     The fact that $M\cap D= \{q^\star\}$ implies that there exists a separating hyperplane there. Since the hyperplane must be tangent to $M$, it can be written as $\set{ q \, \vert \, \sprod{w, q} = \sprod{w,q^\star}}$ for a $w \in \partial f^*(q^\star)$, with $D  \subset \set{ q \, \vert \, \sprod{w, q} \geq \sprod{w,q^\star}}$. Consequently, letting $\epsilon=\sup_{x\in \xi} |A^*q(x)|-1$, we have
$$(1+\epsilon)D \subset \set{ q \, \vert \, \sprod{w, q} \geq (1+\epsilon)\sprod{w,q^\star}}= \set{ q \, \vert \, \sprod{w, q- q^\star} \geq \epsilon \sprod{w,q^\star}}.$$ 
Now, the strong convexity of $f^*$ implies for every $q \in (1+\epsilon)D \cap M$,
\begin{align*}
   f^*(q) \geq f^*(q^\star) + \sprod{w, q-q^\star} + \frac{1}{2L} \norm{q-q^\star}_2^2 \geq  f^*(q^\star) + \epsilon \sprod{w, q^\star} + \frac{1}{2L} \norm{q-q^\star}_2^2.
\end{align*}
Rearranging this, we obtain
\begin{align*}
 -\epsilon \sprod{w, q^\star} \geq  f^*(q^\star) -f^*(q) +  \frac{1}{2L} \norm{q-q^\star}_2^2.
\end{align*}
which is the claim.
\end{proof}

Before moving on, let us record the following proposition:
\begin{prop}\label{prop:Lipschitz_A_Aprime}
We have
\begin{eqnarray}
\label{eq:kappa:taylor}
\norm{A(x)-A(y)}_2\le \kappa_\nabla \Vert x-y\Vert_2 \quad \text{ and }\quad 
\norm{A'(x)-A'(y)}_F\le \kappa_{\hess} \Vert x-y\Vert_2.
\end{eqnarray}
\end{prop}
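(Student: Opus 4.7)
Both inequalities will be proved by reducing to a scalar mean value estimate along the segment $[y,x]$, applied to the one-parameter family of test functions $z\mapsto(A^*q)(z)$. The constants $\kappa_\nabla$ and $\kappa_{\hess}$ are defined precisely so that these estimates are uniform in $\|q\|_2\le 1$.

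For the first bound, I would dualize the Euclidean norm on $\R^m$:
\begin{equation*}
\|A(x)-A(y)\|_2 \;=\; \sup_{\|q\|_2\le 1}\langle q,A(x)-A(y)\rangle \;=\; \sup_{\|q\|_2\le 1}\bigl((A^*q)(x)-(A^*q)(y)\bigr).
\end{equation*}
For each unit $q$, the function $A^*q\in\calC^2(\Omega)$ is scalar, so the mean value theorem along the segment connecting $x$ and $y$ yields
\begin{equation*}
\bigl|(A^*q)(x)-(A^*q)(y)\bigr|\;\le\;\|(A^*q)'\|_\infty\,\|x-y\|_2\;\le\;\kappa_\nabla\,\|x-y\|_2,
\end{equation*}
where the second inequality is the very definition of $\kappa_\nabla$. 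Taking the supremum over $q$ on the left concludes.

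For the second bound, I would run an analogous argument applied this time to the \emph{vector-valued} gradient $(A^*q)'$. Expanding
\begin{equation*}
\|A'(x)-A'(y)\|_F^2 \;=\; \sum_{i=1}^m\|a_i'(x)-a_i'(y)\|_2^2,
\end{equation*}
and for each $i$ invoking the FTC representation $a_i'(x)-a_i'(y)=\int_0^1 a_i''(y+t(x-y))(x-y)\,dt$ together with Jensen's inequality, the problem reduces to the uniform bound $\sum_i\|a_i''(z)u\|_2^2\le\kappa_{\hess}^2\|u\|_2^2$ for every $z\in\Omega$ and $u\in\R^d$. This sum is the squared Frobenius norm of the $m\times d$ matrix $V(z,u)$ whose rows are $a_i''(z)u$. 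Through the duality $\|V\|_F=\sup_{\|N\|_F\le 1}\langle V,N\rangle$, the bound will come from the trilinear-form interpretation of $A''(z)$: for any $q\in\R^m$ and $w\in\R^d$ one has $\langle q,A''(z)[u,w]\rangle=u^T\bigl(\sum_i q_i a_i''(z)\bigr)w$, which by construction of $\kappa_{\hess}$ is dominated by $\kappa_{\hess}\|q\|_2\|u\|_2\|w\|_2$.

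The main technical obstacle is this last reduction, since the naive choice of a rank-one test $N=qw^T$ only recovers the weaker operator-norm estimate $\|A'(x)-A'(y)\|\le\kappa_{\hess}\|x-y\|_2$. A cleaner route is to establish directly the intermediate bound $\|A''(z)[u]\|_F\le\kappa_{\hess}\|u\|_2$ for the Jacobian-valued action of the Hessian tensor on a vector, and then integrate the FTC identity $A'(x)-A'(y)=\int_0^1 A''(z_t)[x-y]\,dt$ along $[y,x]$ with one final application of Jensen's inequality to commute the Frobenius norm with the integral.
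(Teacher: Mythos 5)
Your proof of the first inequality matches the paper's: dualize the Euclidean norm on $\R^m$, apply the mean value theorem along $[x,y]$, and invoke the definition of $\kappa_\nabla$. For the second inequality the paper gives no argument ("follows the same lines and is left to the reader"), and you have put your finger on exactly the point that phrase glosses over: dualizing $\norm{\cdot}_F$ against rank-one tests $N=qw^T$ only controls the spectral norm of $A'(x)-A'(y)$, not the Frobenius norm.

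Unfortunately, the proposal then leaves a genuine gap. Your "cleaner route" hinges on the intermediate bound $\norm{A''(z)[u]}_F\le\kappa_{\hess}\norm{u}_2$, which you assert but do not establish --- and which is in fact false as stated. Take $d=2$, $m=3$, and suppose at some $z$ the Hessians $a_i''(z)$, $i=1,2,3$, are the three symmetric matrices
\begin{equation*}
 \begin{pmatrix}1&0\\0&0\end{pmatrix},\qquad
 \begin{pmatrix}0&0\\0&1\end{pmatrix},\qquad
 \begin{pmatrix}0&1\\1&0\end{pmatrix}.
\end{equation*}
Then $(A^*q)''(z)=q_1 a_1''(z)+q_2 a_2''(z)+q_3 a_3''(z)$ has spectral norm $\frac{\abs{q_1+q_2}}{2}+\sqrt{\tfrac{(q_1-q_2)^2}{4}+q_3^2}$, whose supremum over $\norm{q}_2\leq 1$ equals $\sqrt{3/2}$. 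On the other hand, for $u=(1,0)$ the three rows $a_i''(z)u$ are $(1,0)$, $(0,0)$, $(0,1)$, so that $\norm{A''(z)[u]}_F=\sqrt{2}>\sqrt{3/2}\,\norm{u}_2$. Integrating along a short segment through $z$ then produces $x,y$ violating the claimed Frobenius estimate. What your argument does prove --- fix a coordinate $j$, dualize over $q$ to get $\sum_i \bigl((a_i''(z)u)_j\bigr)^2\le\kappa_{\hess}^2\norm{u}_2^2$, then sum over $j=1,\dots,d$ --- is the weaker $\norm{A'(x)-A'(y)}_F\le\sqrt{d}\,\kappa_{\hess}\norm{x-y}_2$. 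The downstream use in the proof of Proposition~\ref{prop:G} only needs some explicit constant, so the extra $\sqrt{d}$ is harmless there; but both your proof and the stated proposition require this adjustment, or a reformulation in terms of the operator norm of $A'(x)-A'(y)$.
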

\begin{proof}
The proof of the first inequality of \eqref{eq:kappa:taylor} is a standard Taylor expansion :
\begin{align*}
   \norm{A(x) - A(y)}_2 &= \sup_{\substack{q \in \R^m \\ \norm{q}_2=1}} \sprod{q, A(x) - A(y)} = \sup_{\substack{q \in \R^m \\ \norm{q}_2=1}} \abs{(A^*q)(x) - (A^*q)(y)} \\
   &\leq \sup_{\substack{q \in \R^m \\ \norm{q}_2=1}} \sup_{z \in [x,y]} \sprod{(A^*q)'(z),x-y} \leq  \sup_{\substack{q \in \R^m \\ \norm{q}_2=1}} \norm{(A^*q)'}_\infty \norm{x-y}_2 \leq \kappa_\nabla \Vert x-y\Vert_2.
\end{align*}
The proof of the second part of \eqref{eq:kappa:taylor} follows the same lines as the first part and is left to the reader.
\end{proof}

 The next two lemmata aim at transferring bounds from the geometric distances of the sets $X_k$, $\Omega_k$ and $\xi$ to bounds on $|A^* q_k(\xi)|$. Using Proposition~\ref{prop:supvsq_raw}, we may then  transfer these bounds to bounds on the errors of the dual solutions and the dual (or primal) objective values.

\begin{lem} \label{lem:SupVsq}
The following inequalities hold
\begin{align}
  \norm{A^*q_k}_\infty \leq 1 + \frac{R\kappa_{\hess}}{2} \dist(\Omega_k, X_k)^2, \label{eq:normBound}
\end{align}
 \begin{align*}
 f^*(q^\star) -f^*(q_k) &\leq  \frac{R\kappa_{\hess}\rho^2}{2L} \dist(\Omega_k, X_k)^2, \\
  \norm{q_k - q^\star}_2 &\leq \dist(\Omega_k, X_k) \sqrt{R\kappa_{\hess}} \rho.
 \end{align*}
\end{lem}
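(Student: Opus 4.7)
The plan is to prove the three inequalities in order, with the last two flowing from the first via Proposition~\ref{prop:supvsq_raw}.

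For \eqref{eq:normBound}, I would argue that if $\norm{A^*q_k}_\infty \le 1$ the bound is trivial, so assume otherwise. Since each $a_i \in \calC_0(\Omega)$, the function $|A^*q_k|$ attains its supremum at some interior point $x^* \in \Omega$, with $|A^*q_k|(x^*) > 1$. Such an $x^*$ is a local maximizer of $|A^*q_k|$ with value at least $1$, and therefore belongs to $X_k$ by definition \eqref{eq:defXk}. Because the value is strictly positive, $|A^*q_k|$ agrees with $\pm A^*q_k$ in a neighborhood of $x^*$, so $(A^*q_k)'(x^*) = 0$. By definition of $\dist(\Omega_k, X_k)$, there exists $y \in \Omega_k$ with $\norm{y - x^*}_2 \le \dist(\Omega_k, X_k)$, and the feasibility constraint of \eqref{eq:discreteDual} yields $|A^*q_k|(y) \le 1$. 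A second-order Taylor expansion of $A^*q_k$ at $x^*$ then gives
\begin{align*}
    |A^*q_k|(x^*) \le |A^*q_k|(y) + \tfrac{1}{2}\norm{(A^*q_k)''}_\infty \norm{y-x^*}_2^2 \le 1 + \tfrac{\kappa_{\hess}\norm{q_k}_2}{2}\dist(\Omega_k, X_k)^2,
\end{align*}
where I used the definition of $\kappa_{\hess}$ and homogeneity. Combining with Lemma~\ref{lem:boundedness_dual} to bound $\norm{q_k}_2 \le R$ finishes this step.

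For the remaining two inequalities, I would apply Proposition~\ref{prop:supvsq_raw} with $q = q_k$. Since $\xi \subset \Omega$, one has $\sup_{x \in \xi} |A^*q_k|(x) \le \norm{A^*q_k}_\infty$, and \eqref{eq:normBound} gives
\begin{align*}
    f^*(q^\star) - f^*(q_k) + \frac{1}{2L} \norm{q_k - q^\star}_2^2 \le \frac{\rho^2 R\kappa_{\hess}}{2L} \dist(\Omega_k, X_k)^2.
\end{align*}
To separate the two terms on the left-hand side, I would observe that the feasible set of the discretized dual \eqref{eq:discreteDual} contains that of \eqref{eq:dual}, because $\Omega_k \subseteq \Omega$ relaxes the infinite family of constraints to a finite one; in particular $q^\star$ is feasible for \eqref{eq:discreteDual}, so by optimality of $q_k$ for that problem one obtains $f^*(q_k) \le f^*(q^\star)$. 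Hence $f^*(q^\star) - f^*(q_k) \ge 0$, and each term of the sum can be bounded individually by $\frac{\rho^2 R\kappa_{\hess}}{2L} \dist(\Omega_k, X_k)^2$. This yields both the objective bound and (after multiplication by $2L$ and taking square roots) the norm bound.

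The only delicate points are the case analysis in the Taylor step --- ensuring that the sup is attained at a genuine local maximizer in $X_k$ rather than, say, escaping to the boundary --- which is handled by $\calC_0$-decay, and the comparison of feasible sets that makes $f^*(q^\star) - f^*(q_k)$ nonnegative. Both are routine once the definitions are unpacked, so no serious obstacle is expected.
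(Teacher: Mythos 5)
Your proof is correct and follows essentially the same route as the paper: establish \eqref{eq:normBound} via a second-order Taylor expansion around the global maximizer of $\abs{A^*q_k}$ (which is a stationary point and lies in $X_k$), bound the Hessian by $R\kappa_{\hess}$ using Lemma~\ref{lem:boundedness_dual}, then feed the result into Proposition~\ref{prop:supvsq_raw} after noting $f^*(q_k)\leq f^*(q^\star)$ by feasibility of $q^\star$ for the discretized dual. Your explicit observation that both terms on the left-hand side of the proposition's bound are nonnegative, so each is controlled separately, is exactly the step the paper leaves implicit.
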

\begin{proof}[Proof of Lemma \ref{lem:SupVsq}]
To show \eqref{eq:normBound}, first notice that 
\begin{align}
  \norm{A^*q_k}_\infty \leq 1 + \norm{(A^*q_k)''}_\infty \frac{\dist(\Omega_k, X_k)^2}{2}. \label{eq:normBound2}
\end{align}
%Let $B(\xi_i,\tau)$ denote an $\ell^2$-ball centered at $\xi_i$ of radius $\tau$.
Indeed, by definition, the global maximum $z$ of $|A^*q_k|$ lies in $X_k$ and satisfies $(A^*q_k)'(z)=0$. Furthermore, by construction, all points $x$ in $\Omega_k$ satisfy $|A^*q_k(x)|\leq 1$. 
Using a Taylor expansion, we get for all $x\in \Omega$
\begin{align*}
    \abs{A^*q_k(x)-A^*q_k(z)} \leq \norm{(A^*q_k)''}_\infty \frac{\norm{x-z}_2^2}{2}.
\end{align*}
Taking $x$ as the point in $\Omega_k$ minimizing the distance to $z$ leads to \eqref{eq:normBound2}. In addition, we have  $\norm{(A^*q_k)''}_\infty\leq R\kappa_{\hess}$ by Lemma \ref{lem:boundedness_dual}, so that $\norm{A^*q_k}_\infty \leq 1+\epsilon$ with $\epsilon=R\kappa_{\hess} \frac{\dist(\Omega_k, X_k)^2}{2}$.

Now, letting $C= \set{q \, \vert \, \norm{A^*q}_\infty \leq 1}$, we have just proven that $q_k \in (1+\epsilon)C$. Furthermore, due to the optimality of $q_k$ for the discretized problem and to the fact that $q^\star$ is feasible for that problem, we will have $f^*(q_k) \leq f^*(q^\star)$, i.e., $q_k$ is included in the $f^*(q^\star)$-sub-level set of $f^*$: $M=\{q\in \R^m | f^*(q)\leq f^*(q^\star) \}$. An application of Proposition~\ref{prop:supvsq_raw} now yields the result.
\end{proof}

\begin{lem} \label{lem:gridVsQ}
Suppose that $\dist(X_k,\xi)\leq \delta$ and $\dist(\Omega_k,\xi)\leq \delta$. Then
    \begin{align*}
        f^*(q^\star) - f^*(q_k) &\leq \frac{2R\kappa_{\hess}\rho^2}{L} \cdot \delta  \dist(\Omega_k,\xi) \\ 
        \norm{q_k- q^\star}_2 &\leq  \rho\sqrt{2R\kappa_{\hess}}\sqrt{ \delta \cdot \dist(\Omega_k, \xi)}.
    \end{align*}
\end{lem}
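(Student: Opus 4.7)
The plan is to mirror the proof of Lemma~\ref{lem:SupVsq}, i.e.\ upper bound $\sup_{x\in\xi}\abs{A^*q_k(x)}-1$ and then invoke Proposition~\ref{prop:supvsq_raw}. The improvement over Lemma~\ref{lem:SupVsq} must come from using \emph{both} proximity hypotheses: each point of $\xi$ sits close to the grid $\Omega_k$, where $\abs{A^*q_k}\leq 1$, and close to a local maximizer in $X_k$, where the gradient of $A^*q_k$ vanishes. Exploiting both simultaneously should produce a bound of order $\delta\cdot\dist(\Omega_k,\xi)$ rather than the cruder $\delta^2$ or $\dist(\Omega_k,X_k)^2$.

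Concretely, I would proceed as follows. For each $\xi_i$, pick $z_i\in X_k$ with $\norm{z_i-\xi_i}_2\leq \dist(X_k,\xi)\leq \delta$ and $x_i\in\Omega_k$ with $\norm{x_i-\xi_i}_2\leq \dist(\Omega_k,\xi)$. Write $g:=A^*q_k$. Feasibility of $q_k$ for \eqref{eq:discreteDual} gives $\abs{g(x_i)}\leq 1$, and $z_i$ being an interior local maximizer of $\abs{g}$ with $\abs{g(z_i)}\geq 1>0$ forces $g'(z_i)=0$. Also, by Lemma~\ref{lem:boundedness_dual} and Assumption~\ref{ass:regf}, $\norm{g''}_\infty \leq R\kappa_{\hess}$.

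The key step parameterizes the segment from $x_i$ to $\xi_i$ by $y(t)=x_i+t(\xi_i-x_i)$ and writes
\begin{align*}
  g(\xi_i)-g(x_i)=\int_0^1 \sprod{g'(y(t)),\xi_i-x_i}\,dt.
\end{align*}
Since $g'(z_i)=0$, the mean value inequality applied to $g'$ yields $\norm{g'(y(t))}_2\leq R\kappa_{\hess}\norm{y(t)-z_i}_2$; the triangle inequality then gives $\norm{y(t)-z_i}_2\leq (1-t)\norm{\xi_i-x_i}_2+\norm{\xi_i-z_i}_2\leq (1-t)\dist(\Omega_k,\xi)+\delta$. Integrating and using $\dist(\Omega_k,\xi)\leq\delta$, we obtain $\abs{g(\xi_i)-g(x_i)}\leq 2R\kappa_{\hess}\,\delta\,\dist(\Omega_k,\xi)$. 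Combined with $\abs{g(x_i)}\leq 1$, this gives
\begin{align*}
  \sup_{x\in\xi}\abs{A^*q_k(x)}-1\leq 2R\kappa_{\hess}\,\delta\,\dist(\Omega_k,\xi).
\end{align*}

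Finally, plugging this bound into Proposition~\ref{prop:supvsq_raw} separates the two non-negative terms on the left and produces both claimed inequalities, the one on $f^*(q^\star)-f^*(q_k)$ directly and the one on $\norm{q_k-q^\star}_2$ by extracting the square root. The main obstacle is the middle step: it is tempting to anchor the Taylor expansion at $z_i$, which gives only a quadratic $\tfrac12 R\kappa_{\hess}\delta^2$ bound from $\xi_i$ alone and misses the refinement. Anchoring instead at $x_i$ (where we have the hard constraint $\abs{g(x_i)}\leq 1$) while controlling $\norm{g'}_2$ through the vanishing at $z_i$ is what delivers the product $\delta\cdot\dist(\Omega_k,\xi)$.
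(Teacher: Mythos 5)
Your proof is correct and follows essentially the same route as the paper: for each $\xi_i$ you anchor the value bound at the nearest grid point $x_i\in\Omega_k$, control $\norm{(A^*q_k)'}_2$ along the segment $[x_i,\xi_i]$ via the vanishing gradient at the nearest $z_i\in X_k$ together with $\norm{(A^*q_k)''}_\infty\le R\kappa_{\hess}$, and feed the resulting bound $\sup_{x\in\xi}\abs{A^*q_k(x)}-1\le 2R\kappa_{\hess}\,\delta\,\dist(\Omega_k,\xi)$ into Proposition~\ref{prop:supvsq_raw}. The only cosmetic difference is that you integrate the sharper $t$-dependent estimate of $\norm{y(t)-z_i}_2$ whereas the paper simply takes a supremum over the segment, obtaining the factor $2\delta$ directly; both give the same final estimate.
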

\begin{proof}
Let $y_k^i$ (resp. $x_k^i$) be the point closest to $\xi_i$ in $\Omega_k$ (resp. $X_k$).
 By assumption, we have $\|x_k^i-y_k^i\|_2\leq 2\delta$. 
For all $i$, we have 
\begin{align}
 |A^*q_k(\xi_i)|\leq |A^*q_k(y_k^i)| + \sup_{z\in [y_k^i, \xi_i]}  \|(A^*q_k)'(z)\|_2 \|\xi_i -y_k^i\|_2 \leq 1 + \sup_{z\in [y_k^i, \xi_i]}  \|(A^*q_k)'(z)\|_2 \|\xi_i -y_k^i\|_2. 
\end{align}
Then, for all $z\in [y_k^i, \xi_i]$, using the fact that $(A^*q_k)'(x_k^i)=0$, we get
\begin{equation*}
 \|(A^*q_k)'(z)\|_2 \leq R\kappa_{hess} \|z-x_k^i\|_2 \leq 2\delta R\kappa_{\hess}.
\end{equation*}
Hence, we have $|A^*q_k(\xi_i)| \leq 1 + 2\delta R\kappa_{\hess} \|\xi_i -y_k^i\|_2 \leq 1 + 2\delta R\kappa_{\hess} \dist(\Omega_k,\xi)$.
To conclude, we use Proposition~\ref{prop:supvsq_raw} again. 
\end{proof}

The last assertion takes full advantage of Assumption~\ref{ass:dualCert} and the fact that the function $|A^*q^\star|$ is uniformly concave around its maximizers. It allows to transfer bounds from $\Vert q_k-q^\star\Vert_2$ to bounds on the distance from $X_k$ to $\xi$.

\begin{prop} \label{prop:qVsRegime}
%Assume that $\norm{q_k-q^\star}_2 \leq \min\left(\frac{R\gamma\tau_0^2}{4\kappa}, \frac{R\gamma}{\kappa_{\hess}}, \frac{R\gamma}{\kappa_\nabla} \sqrt{4- \gamma \tau_0^2}\right)$, 
Define $c_q=\gamma \min\left(\frac{\tau_0^2}{2\kappa},\frac{\tau_0}{\kappa_{\nabla}},\frac{1}{\kappa_{\hess}}\right)$ and assume that $\norm{q_k-q^\star}_2 < c_q$, then
\[\dist(\xi,X_k) \le \frac{\kappa_\nabla}{\gamma} \norm{q_k-q^\star}_2. \]

Moreover, for each $i$, if $B_i$ is the ball or radius $\tau_0$ around $\xi_i$, then $X_k$ contains at most one point in $B_i$ and $A^*q_k$ has the same sign as $A^*q^\star(\xi_i)$ in $B_i$.

%  \begin{enumerate}[(i)]
%   \item $X_k$ contains at most $s$ points which satisfy $\norm{x_k^i - \xi_i}_2 \leq \tau$.
%   \item If $\omega_\ell^k$ is a cell as in $(ii)$, we must necessarily have
%     \begin{align*}
%         \sup_{x \in \mathrm{vert} \, \omega_\ell^k} \abs{(A^*q_k)(x)} \geq 1- \kappa_{\hess}\diam(\omega_\ell^k)^2
%     \end{align*}
%   \item If the initial grid also obeys the assumption of Lemma \ref{lem:nonsparse}, $S$ will contain \emph{exactly} $s$ points, one in each $\tau$-neighborhood $U_\tau(x_i)$, and there must be
%   \begin{align}
%     \sup_{x \in \mathrm{vert} \, \omega_\ell^k} \abs{(A^*q_k)(x)} = 1 \label{eq:exceedsinvertex}
%   \end{align}
%   for each cell $\omega_\ell^k$ as in $(ii)$
%  \end{enumerate}
\end{prop}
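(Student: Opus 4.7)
The approach is to exploit Proposition~\ref{prop:Lipschitz_A_Aprime} together with Assumption~\ref{ass:regf} to conclude that, whenever $\|q_k-q^\star\|_2<c_q$, the perturbed certificate $A^*q_k$ inherits from $A^*q^\star$ the two key structural properties near $\xi$: (i) a fixed nonvanishing sign on each ball $B_i$, and (ii) strict concavity of the signed function on $B_i$. Each of the three terms composing $c_q = \gamma\min(\tau_0^2/(2\kappa),\tau_0/\kappa_\nabla,1/\kappa_{\hess})$ buys one perturbation budget: the $\kappa$-term controls the values $\|A^*(q_k-q^\star)\|_\infty\leq\kappa\|q_k-q^\star\|_2$, the $\kappa_\nabla$-term the gradients, and the $\kappa_{\hess}$-term the Hessians.

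\textbf{Localization, sign, and uniqueness.} Let $x\in X_k$. Since $|A^*q_k(x)|\geq 1$ and $|A^*(q_k-q^\star)(x)|\leq \kappa\|q_k-q^\star\|_2 < \gamma\tau_0^2/2$, we get $|A^*q^\star(x)| > 1-\gamma\tau_0^2/2$, and \eqref{eq:reg2} forces $\dist(x,\xi)<\tau_0$, i.e., $x\in B_i$ for some $i$. Applying the same value perturbation at any $y\in B_i$, together with $|A^*q^\star(y)|\geq\gamma\tau_0^2/2$ from \eqref{eq:reg}, shows $A^*q_k$ does not vanish on $B_i$ and shares the sign $\sigma_i := \sgn(A^*q^\star(\xi_i))$ of $A^*q^\star$ there. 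Consequently $\sigma_i A^*q_k = |A^*q_k|$ on $B_i$, and the Hessian satisfies
\begin{equation*}
\sigma_i (A^*q_k)''(y) = |A^*q^\star|''(y) + \sigma_i (A^*(q_k-q^\star))''(y) \preccurlyeq (-\gamma+\kappa_{\hess}\|q_k-q^\star\|_2)\Id \prec 0,
\end{equation*}
using $\|q_k-q^\star\|_2 < \gamma/\kappa_{\hess}$. Hence $\sigma_i A^*q_k$ is strictly concave on $B_i$, so it has at most one critical point there; since any $x\in X_k\cap B_i$ is a local maximizer of $|A^*q_k|=\sigma_i A^*q_k$ and therefore a critical point of the (now smooth) function, we conclude $|X_k\cap B_i|\leq 1$.

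\textbf{Distance bound.} To quantify $\|x-\xi_i\|_2$ for $x\in X_k\cap B_i$, observe that $(A^*q_k)'(x)=0$ implies $(A^*q^\star)'(x)=(A^*(q^\star-q_k))'(x)$, hence $\|(A^*q^\star)'(x)\|_2\leq\kappa_\nabla\|q_k-q^\star\|_2$. Using $(A^*q^\star)'(\xi_i)=0$, the fundamental theorem of calculus along $[\xi_i,x]\subset B_i$ combined with the concavity estimate \eqref{eq:reg} yields
\begin{equation*}
\sprod{\sigma_i (A^*q^\star)'(x),\, x-\xi_i} = \int_0^1 \sprod{\sigma_i(A^*q^\star)''(\xi_i+t(x-\xi_i))(x-\xi_i),\, x-\xi_i}\,dt \leq -\gamma\|x-\xi_i\|_2^2.
\end{equation*}
Cauchy-Schwarz on the left-hand side bounds it in absolute value by $\kappa_\nabla\|q_k-q^\star\|_2\|x-\xi_i\|_2$; cancelling one factor of $\|x-\xi_i\|_2$ gives $\|x-\xi_i\|_2\leq \frac{\kappa_\nabla}{\gamma}\|q_k-q^\star\|_2$. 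Taking the supremum over $x\in X_k$, each matched with its closest $\xi_i$, delivers $\dist(\xi,X_k)\leq \frac{\kappa_\nabla}{\gamma}\|q_k-q^\star\|_2$ and completes the proof.

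\textbf{Main obstacle.} The principal subtlety is that \eqref{eq:reg} gives concavity of $|A^*q^\star|$, a function only smooth where $A^*q^\star\neq 0$. Steps one and two are precisely there to pin down the sign of $A^*q_k$ on each $B_i$, so that the non-smooth $|\cdot|$ can be replaced by the signed, smooth $\sigma_i A^*q_k$ before invoking the Hessian bound. Bookkeeping which term of $c_q$ pays for which estimate (values for localization and sign, Hessians for uniqueness, gradients for the quantitative distance) is the only other place where care is needed; otherwise the argument is a standard perturbation of a non-degenerate critical point.
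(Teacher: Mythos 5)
Your proof is correct and follows essentially the same strategy as the paper: the three perturbation bounds $\kappa\norm{q_k-q^\star}_2$, $\kappa_\nabla\norm{q_k-q^\star}_2$, $\kappa_{\hess}\norm{q_k-q^\star}_2$ on values, gradients, and Hessians transfer the non-degeneracy of $|A^*q^\star|$ to $|A^*q_k|$, and the fundamental-theorem-of-calculus bound on the distance from a critical point of $A^*q_k$ to $\xi_i$ is logically equivalent to the paper's exclusion of critical points from the annulus $\tau < \norm{x-\xi_i}_2 \leq \tau_0$. Your explicit bookkeeping of the sign $\sigma_i$ is a small cosmetic improvement over the paper's treatment, but the mathematical content is the same.
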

\begin{proof}
Define $\tau=\frac{\kappa_\nabla}{\gamma}\norm{q_k-q^\star}$ and note that $\tau < \tau_0$.
    By Proposition \ref{prop:Lipschitz_A_Aprime}, we have for each $x\in \Omega$
    \begin{align*}
        \abs{(A^*q_k)(x)-(A^*q^\star)(x)}&\leq\norm{A^*(q_k-q^\star)}_\infty \leq  \kappa\norm{q_k-q^\star}_2 < \frac{\gamma\tau_0^2}{2} \\
        \norm{(A^*q_k)'(x)-(A^*q^\star)'(x)}_2 & \le  \norm{(A^*(q_k-q^\star))'}_\infty \le \kappa_\nabla \norm{q_k-q^\star}_2 =  \gamma\tau\\
        \norm{(A^*q_k)''(x)-(A^*q^\star)''(x)}_2 &\le \norm{(A^*(q_k-q^\star))''}_\infty \le \kappa_{\hess}  \norm{q_k-q^\star}_2 <\gamma. 
\end{align*}
The above inequalities together with Assumption~\ref{ass:dualCert} imply the following for all $1\leq i \leq s$: 
\begin{enumerate}[(i)]
 \item For $x$ with $\norm{x-\xi_i}_2 \leq \tau_0$, we have  
$
    \sgn(A^*q_k)(x) = \sgn(A^*q^\star)(x) =\sgn(A^*q^\star)(\xi_i).
$ \item  For $x$ with $\norm{x-\xi_i}_2 \leq \tau_0$, we have  
$
    (\abs{A^*q_k})''(x) \prec (\abs{A^*q^\star})''(x) + \gamma\id \prec 0.
$
\item For $x$ with $\norm{x-\xi_i}_2 \geq \tau_0$, we have
 $\abs{(A^*q_k)(x)}<\abs{(A^*q^\star)(x)}  + \frac{\gamma\tau_0^2}{2} \leq 1 - \frac{\gamma \tau_0^2}{2} + \frac{\gamma\tau_0^2}{2} =1.
$
 \item For $x$ with $\tau < \norm{x-\xi_i}_2 \leq \tau_0$ 
, we have
 $\norm{(A^*q_k)'(x)}_2\ge \norm{(A^*q^\star)'(x)}_2 - \gamma\tau > 0.$
\end{enumerate}

The estimate $\norm{(A^*q^\star)'(x)}_2 > \gamma\tau$ deserves a slightly more detailed justification than the others.  Define $w = x-\xi_i$ and $g(\theta) = \sprod{(A^*q)'(\xi_i+\theta w),w} $ for $\theta \in (0,1)$. We may apply the mean value theorem to conclude that
\begin{align*}
 g(1)-g(0) = g'(\hat{\theta}) =  \sprod{(A^*q)''(\xi_i+\hat{\theta}w) w,w}
\end{align*}
for some $\hat{\theta}\in (0,1)$. Since $g(0)=\sprod{(A^*q^\star)'(\xi_i),w}=\sprod{0,w} =0$, and $\sprod{(A^*q^\star)''(\xi_i+\hat{\theta}w) w,w} \leq -\gamma \norm{w}_2^2$,  due to $(\abs{A^*q^\star})'' \preccurlyeq - \gamma \id$ in $\{x\in \Omega, \|x-\xi_i\|_2\leq \tau_0\}$, we obtain
\begin{align*}
 \norm{(A^*q^\star)'(x)}_2 \geq \frac{1}{\norm{w}_2} \abs{\sprod{(A^*q^\star)'(x),w}} = \frac{\abs{g(1)}}{\norm{w}_2} \geq \frac{\gamma \norm{w}_2^2}{\norm{w}_2} > \gamma \tau,
\end{align*}
since $\norm{w}_2=\norm{x-\xi_i}_2 > \tau$ by assumption. The last estimate was the claim $(iv)$.

This implies a number of things. First, any local maximum of $\abs{A^*q_k}$ with $\abs{A^*q_k}\geq 1$ must lie within a distance of $\tau$ from the set $\xi$ (since for all other points, we have $\abs{A^*q_k} < 1$ -- via $(iii)$ -- or $(Aq_k)'\neq 0$ -- via $(iv)$). Since $\abs{A^*q_k}$ is locally concave on the $\tau_0$-neighborhoods of the $\xi_i$ -- this follows from $(ii)$ -- at most one local extremum furthermore exists in each such neighborhood. This is the claim.

\end{proof}

\subsection{Fixed grids estimates}
\label{sec::exchange::fixed:grids}
In this section,  we consider a fixed grid $\Omega_0$ and ask what we need to assume about it in order to guarantee that the set of local maxima of  $ \abs{A^*q_0(x)}$ is close to true support $\xi$. We express our result in terms of a geometrical property that we can control, the \emph{width} of the grid $\dist(\Omega_0,\Omega)$.

\begin{theo} \label{th:fixedGrid}
    Assume that 
    $\dist(\Omega_0,\Omega) \le \frac{c_q}{\rho\sqrt{\kappa_{\hess}}}$,
then
\begin{align*}
    \dist(\xi, X_0) &\leq \frac{\kappa_\nabla\sqrt{R\kappa_{\hess}}\rho}{2\gamma} \dist(\Omega_0,\Omega)\\
     \norm{q_0 - q^\star}_2 &\le \rho \sqrt{R\kappa_{\hess}}\dist(\Omega_0,\Omega) \\
            \inf (\mathcal P(\Omega_0)) &\leq \inf \eqref{eq:primal} +  \frac{R\kappa_{hess}\rho^2}{2L} \dist(\Omega_0,\Omega)^2
\end{align*}

\end{theo}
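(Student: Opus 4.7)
My plan is to reduce all three claims to the already-established Lemma~\ref{lem:SupVsq} and Proposition~\ref{prop:qVsRegime}, applied to the single iteration $k=0$. The key observation that unlocks everything is that $X_0 \subseteq \Omega$ by definition, and therefore
\begin{equation*}
    \dist(\Omega_0, X_0) \leq \dist(\Omega_0, \Omega).
\end{equation*}
This allows us to replace the abstract quantity $\dist(\Omega_0,X_0)$ appearing in Lemma~\ref{lem:SupVsq} by the geometric quantity $\dist(\Omega_0,\Omega)$ that we actually control.

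Next, I would invoke Lemma~\ref{lem:SupVsq} with $k=0$ to obtain immediately
\begin{equation*}
    \norm{q_0 - q^\star}_2 \leq \rho\sqrt{R\kappa_{\hess}}\,\dist(\Omega_0, X_0) \leq \rho\sqrt{R\kappa_{\hess}}\,\dist(\Omega_0,\Omega),
\end{equation*}
which is the second inequality of the theorem. For the third inequality, I would use strong duality applied to both $\mathcal{P}(\Omega_0)$ (Proposition~\ref{prop:duality} on the discretized problem) and $\mathcal{P}(\Omega)$, which yields
\begin{equation*}
    \inf(\mathcal{P}(\Omega_0)) - \inf\eqref{eq:primal} = -f^*(q_0) - (-f^*(q^\star)) = f^*(q^\star) - f^*(q_0),
\end{equation*}
and combine this with the $f^*$-gap bound from Lemma~\ref{lem:SupVsq}.

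For the first inequality, I would feed the already-proved bound on $\norm{q_0-q^\star}_2$ into Proposition~\ref{prop:qVsRegime}. For that to apply we need $\norm{q_0-q^\star}_2 < c_q$, which is where the hypothesis $\dist(\Omega_0,\Omega) \leq c_q/(\rho\sqrt{\kappa_{\hess}})$ is used: together with the bound on $\norm{q_0-q^\star}_2$ just derived, this secures the smallness condition (the main place where the explicit constant will need to be tracked carefully—this is the step I expect to require the most care, since the hypothesis as written seems slightly tight relative to the factor $\sqrt{R}$ coming from Lemma~\ref{lem:SupVsq}). Once the hypothesis is verified, Proposition~\ref{prop:qVsRegime} gives
\begin{equation*}
    \dist(\xi, X_0) \leq \frac{\kappa_\nabla}{\gamma}\norm{q_0-q^\star}_2 \leq \frac{\kappa_\nabla\,\rho\sqrt{R\kappa_{\hess}}}{\gamma}\,\dist(\Omega_0,\Omega),
\end{equation*}
which yields the first inequality (possibly up to a constant factor that the authors may have sharpened by a more refined Taylor argument centered at the maximizer of $|A^*q_0|$ rather than at $q^\star$). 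The only real work, then, is the bookkeeping on the constants and the verification of the smallness hypothesis—everything else is a direct plug-in of the auxiliary lemmas.
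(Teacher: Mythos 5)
Your proposal follows essentially the same route as the paper's own proof: observe $\dist(\Omega_0,X_0)\leq\dist(\Omega_0,\Omega)$, apply Lemma~\ref{lem:SupVsq} (together with the strong duality identity $\inf(\mathcal P(\Omega_0))=-f^*(q_0)$) for the second and third bounds, then feed the resulting $q$-bound into Proposition~\ref{prop:qVsRegime} for the first. Your two flags are also well taken: the paper's own argument in fact produces $\frac{\kappa_\nabla\rho\sqrt{R\kappa_{\hess}}}{\gamma}$ with no factor $\tfrac12$, and the hypothesis $\dist(\Omega_0,\Omega)\le c_q/(\rho\sqrt{\kappa_{\hess}})$ only yields $\norm{q_0-q^\star}_2 \leq \sqrt{R}\,c_q$ rather than $\leq c_q$ --- the paper simply asserts $\norm{q_0-q^\star}_2\leq c_q$ without tracking the $\sqrt{R}$, so the smallness hypothesis should presumably read $\dist(\Omega_0,\Omega)\le c_q/(\rho\sqrt{R\kappa_{\hess}})$.
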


\begin{proof}
It is trivial that $\dist(\Omega_0,X_0) \le \dist(\Omega_0,\Omega)$. Applying Lemma~\ref{lem:SupVsq}, we immediately obtain the bound on $\norm{q_0-q^\star}_2$. By the same lemma, 
 \begin{align*}
    \inf (\mathcal P(\Omega_0)) &= \sup (\mathcal D(\Omega_0)) = -f^*(q_0) \leq -f^*(q^\star) + \frac{R\kappa_{hess}\rho^2}{2L} \dist(\Omega_0,X_0)^2 \\
    &= \sup \eqref{eq:dual} + \frac{R\kappa_{hess}\rho^2}{2L} \dist(\Omega_0,\Omega)^2 = \inf \eqref{eq:primal} + \frac{R\kappa_{hess}\rho^2}{2L} \dist(\Omega_0,\Omega)^2.
 \end{align*}
In order to obtain the first bound, remark that $\norm{q_0-q^\star}_2 \leq c_q$ and use Proposition~\ref{prop:qVsRegime}.
\end{proof}

\begin{rem}
\label{rem:dist_not_reflexive}
Note that Theorem~\ref{th:fixedGrid} allows to control $\dist(\xi, X_0)$ but not $\dist(X_0, \xi)$. Indeed each $x \in X_0$ is guaranteed to be close to a $\xi_i$, but not every $\xi_i$ needs to have a point in $X_0$ closeby. Note however that the bounds on the optimal value indicates that in this case the missed $\xi_i$ is not crucial to produce a good candidate for solving the primal problem. We will provide more insight on this, in the case of $f$ being strongly convex, in Section \ref{sec:nonConvex}.
\end{rem}

\subsection{Eventual linear convergence rate}
\label{sec::exchange:linear}
In this section, we provide a convergence rate for the iterative algorithm. As a follow-up to Remark~\ref{rem:dist_not_reflexive}, the proof of convergence relies on the fact that the distances $\dist(X_k, \xi)$ and $\dist( \xi,X_k)$ are equal. In order to ensure this fact, one has to wait for a finite number of iterations, this is exactly the purpose of the next proposition.

\begin{prop} \label{prop:Good_Localization}
Let $B_i=\{x\in \Omega, \|x - \xi_i \|_2<\tau_0 \}$. There exists a finite number of iterations $N$, such that for all $k\geq N$, $X_k$ has exactly $s$ points, one in each $B_i$. It follows that $dist(X_k,\xi)=dist(\xi,X_k)$. Moreover if $S_k$ is the set of active point of $\mathcal D(\Omega_k)$, that is 
\[ S_k=\{z \in \Omega_k \text{ s.t. } |A^* q_k(z)|=1\}, \] then $S_k\subset \cup_i B_i$ and for each $i$, $B_i \cap S_k \ne \emptyset$. 
\end{prop}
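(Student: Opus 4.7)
The plan is to first invoke the generic convergence Theorem~\ref{thm:generic_convergence} to push $(q_k,\mu_k)$ into the basin where Proposition~\ref{prop:qVsRegime} applies, and then couple its local structural information with the weak-$*$ convergence of the primal iterates to pin down both $X_k$ and $S_k$. Uniqueness of $q^\star$ follows from strong convexity of $f^*$ via Proposition~\ref{eq:duality_lipschitz_stronglyconvex}, while primal uniqueness is postulated in Assumption~\ref{ass:dualCert}, so Theorem~\ref{thm:generic_convergence} yields the full-sequence convergences $q_k \to q^\star$ and $\mu_k \wstarto \mu^\star$. Invoking the last paragraph of Assumption~\ref{ass:dualCert}, I shrink $\tau_0$ once and for all so that the closures $\overline{B_i}$ are pairwise disjoint; this is cost-free since $\tau_0$ appears only as a sufficient threshold. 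Then I choose $N_1$ such that $\|q_k - q^\star\|_2 < c_q$ for $k \geq N_1$, which activates the four local properties (i)--(iv) from the proof of Proposition~\ref{prop:qVsRegime} on each $B_i$.

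The central step is to show $S_k \cap B_i \neq \emptyset$ for every $i$ and every $k$ sufficiently large; this simultaneously gives the last assertion of the proposition and feeds the analysis of $X_k$. The key tool is the primal-dual relation \eqref{eq:primal_dual_relationship} applied to the discretized pair $(\mu_k,q_k)$, which forces $\supp(\mu_k) \subseteq \{x \in \Omega_k : |A^*q_k|(x)=1\} = S_k$. For each $i$, I pick a bump $\phi_i \in \calC_0(\Omega)$ supported in $B_i$ with $\phi_i(\xi_i) = 1$; then $\mu^\star(\phi_i) = \alpha_i^\star \neq 0$, so weak-$*$ convergence gives $\mu_k(\phi_i) \neq 0$ for $k$ past some $N_2^i$, forcing $\supp(\mu_k) \cap B_i \neq \emptyset$ and hence $S_k \cap B_i \neq \emptyset$. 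Setting $N = \max(N_1, \max_i N_2^i)$ makes this uniform in $i$.

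From there the remaining assertions follow by local analysis on each $B_i$. A point $y_k^i \in S_k \cap B_i$ satisfies $|A^*q_k|(y_k^i) = 1$, so the maximum of the strictly concave function $|A^*q_k|$ over $\overline{B_i}$ is at least $1$; property (iv) (nonzero gradient on the annulus $\tau < \|x-\xi_i\|_2 \leq \tau_0$) keeps this maximum in the interior of $B_i$, producing a unique local maximizer $z_k^i$ of $|A^*q_k|$ with $|A^*q_k|(z_k^i) \geq 1$, i.e. $z_k^i \in X_k$. Property (iii) forbids any point of $X_k$ or of $S_k$ from lying outside $\bigcup_i B_i$, so $X_k = \{z_k^1,\dots,z_k^s\}$ and $S_k \subseteq \bigcup_i B_i$. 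Pairwise disjointness of $\overline{B_i}$ then makes $z_k^i$ the unique point of $X_k$ closest to $\xi_i$ and $\xi_i$ the unique point of $\xi$ closest to $z_k^i$, so both asymmetric distances equal $\max_i \|z_k^i - \xi_i\|_2$.

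The main obstacle will be the transfer from primal mass to active dual constraints carried out in the second step. Proposition~\ref{prop:qVsRegime} by itself does not guarantee that $|A^*q_k|$ actually reaches $1$ near each $\xi_i$ — the local peak converges to $1$ but a priori could approach from below, in which case that $\xi_i$ would remain invisible to the grid-refinement rule. Uniqueness of $\mu^\star$ together with weak-$*$ convergence of $\mu_k$, combined with the KKT-type inclusion $\supp(\mu_k)\subseteq S_k$, is precisely what bridges that gap.
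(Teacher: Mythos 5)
Your proof is correct, and it diverges from the paper's on the key step, namely showing that every $B_i$ contains a point of $S_k$ for large $k$. The paper argues via a \emph{functional gap}: it introduces $\calM_- = \{\mu : \exists i,\ \supp(\mu)\cap B_i = \emptyset\}$, observes that $J_+ \eqdef \inf_{\calM_-} J > J(\mu^\star)$ (this follows from uniqueness of $\mu^\star$, coercivity, weak-$*$ closedness of $\calM_-$ and weak-$*$ lower semicontinuity of $J$), and then uses convergence of the objective values $J(\mu_k)\to J(\mu^\star)$ to conclude $\mu_k\notin\calM_-$ eventually. You instead test $\mu_k$ directly against a bump $\phi_i\in\calC_0(\Omega)$ supported in $B_i$ with $\phi_i(\xi_i)=1$, and use full-sequence weak-$*$ convergence $\mu_k\wstarto\mu^\star$ to get $\mu_k(\phi_i)\to\alpha_i^\star\ne 0$. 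Both routes are legitimate: both feed the KKT inclusion $\supp(\mu_k)\subseteq S_k$ and both lean on Theorem \ref{thm:generic_convergence}. Yours is arguably more elementary (no auxiliary variational problem to analyze), while the paper's gap argument only needs convergence of the \emph{values} $J(\mu_k)$ -- which holds for free by monotonicity -- rather than full-sequence weak-$*$ convergence of the iterates, and so is marginally more robust. The second half of your argument (strict concavity from item (ii), exclusion outside $\bigcup_i B_i$ from item (iii), locating the critical point via item (iv)) tracks the paper closely, with the small note that to keep the maximizer off the boundary $\|x-\xi_i\|_2=\tau_0$ you really want item (iii) (the value there is $<1$), while (iv) then pushes it into the inner ball of radius $\tau$. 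Finally, your shrinking of $\tau_0$ to make the $\overline{B_i}$ disjoint is harmless but unnecessary: Assumption \ref{ass:dualCert} already forces $\|\xi_i-\xi_j\|_2\ge 2\tau_0$, since if two of the $\tau_0$-balls overlapped, strict concavity of $|A^*q^\star|$ along the segment $[\xi_i,\xi_j]$ combined with $|A^*q^\star|(\xi_i)=|A^*q^\star|(\xi_j)=1$ would force a value $>1$ somewhere on the segment, contradicting $\|A^*q^\star\|_\infty\le 1$.
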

\begin{proof}

 We first prove that $B_i$ contains a point in $S_k$. To this end, define the set of measures $\calM_-=\{\mu \in \calM(\Omega), \exists i\in \{1,\hdots, s\}, \supp(\mu)\cap B_i = \emptyset\}$ and
    \begin{equation*}
	 J_{+} = \min_{\mu \in \calM_-} \norm{\mu}_{\calM} + f(A\mu).
    \end{equation*}
    By assumption \eqref{ass:dualCert}, $J_{+}>J^\star$. Since $(J(\mu_k))_{k\in \N}$ converges to $J(\mu^\star)$, there exists $k_2\in \N$ such that $\forall k\geq k_2$, $J(\mu_k)<J_{+}$. Hence $\mu_k$ must for each $1\leq i \leq s$ have points $z_{k}^i\in \Omega_k$ such that $\mu_k$ has non-zero mass at $z_{k}^i$. Consequently, $|A^*q_k(z_{k}^i)|=1$, hence, each $B_i$ contains \emph{at least} one point in $\Omega_k$ such that $|A^*q_k(z_{k}^i)|=1$.
   
   Notice that $q_k$ converges to $q^\star$ by Theorem \ref{thm:generic_convergence}. Hence there  a finite number of iterations $k_1$ such that $\Vert q_k-q^\star\Vert< c_q$ for all $k\ge k_1$.  
    By item $(iii)$ of the proof of Proposition~\ref{prop:qVsRegime}, $|A^*q_k| <1$ outside  $\cup _i B_i$, 
    and by item $(ii)$, $|A^*q_k|$ is strictly concave in each $B_i$. Hence each $B_i$ contains exactly one maximizer of $|A^*q_k|$ exceeding one.
    
\end{proof}

We now move on to analyzing our exchange approach. Before formulating the main result, let us introduce a term: \emph{$\delta$-regimes}.

\begin{defi}
    We say that the algorithm enters a \emph{$\delta$-regime} at iteration $k_\delta$ if for all $k \ge k_\delta$, we have $\dist(\xi,X_k) \le \delta$. In particular it means that only points with a distance at most $\delta$ from $\xi$ are added to the grid.
\end{defi}

\begin{lem} \label{lem:regimeChange}
Let $\bar \tau_0 = \frac{\kappa_\nabla}{\gamma}c_q$ and $A=2^{d+1}d^{d/2}\left( \frac{\rho \sqrt{R\kappa_{hess}}\kappa_{\nabla}}{\gamma}\right)^{3d}$. Let $N$ be as in Proposition~\ref{prop:Good_Localization}.
\begin{enumerate}
\item For any $\tau$, the algorithm enters a \emph{$\tau$-regime} after a finite number of iterations.
\item Assume that $N$ iterations have passed and that the algorithm is in a $\tau$-regime with $\tau \leq \bar \tau_0$. Then for every  $\alpha \in (0,1)$ it takes no more than $\left\lceil \frac{A}{\alpha^{2d}} \right\rceil  +1$ iterations to enter an $\alpha\tau$-regime.
\end{enumerate}

\end{lem}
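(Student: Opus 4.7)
The plan is to separate the two claims: Part 1 follows almost immediately from the global convergence of $(q_k)$, while Part 2 is the main work. For Part 2, I would first identify a grid-density condition that suffices to enter an $\alpha\tau$-regime, and then count iterations via a packing/volume argument.

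For Part 1, Theorem~\ref{thm:generic_convergence} (applicable under Assumption~\ref{ass:grid}) gives $q_k \to q^\star$. For any $\tau>0$ I would pick $k_\tau$ so large that $\norm{q_k-q^\star}_2 \leq \min(c_q, \gamma\tau/\kappa_\nabla)$ for every $k\geq k_\tau$. Proposition~\ref{prop:qVsRegime} then yields $\dist(\xi,X_k)\leq(\kappa_\nabla/\gamma)\norm{q_k-q^\star}_2\leq\tau$, so the algorithm has permanently entered a $\tau$-regime at iteration $k_\tau$.

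For Part 2, assume we are in a $\tau$-regime with $\tau\leq\bar\tau_0$ and $k\geq N$. Proposition~\ref{prop:Good_Localization} tells us $|X_k|=s$ with exactly one point $x_k^i\in B(\xi_i,\tau)$ per ball, and since $\Omega_k\supseteq X_{k-1}$ we also have $\dist(\Omega_k,\xi)\leq\tau$. Lemma~\ref{lem:gridVsQ} with $\delta=\tau$, combined with Proposition~\ref{prop:qVsRegime}, then gives
\begin{equation*}
    \dist(\xi,X_k)\;\leq\;\frac{\kappa_\nabla}{\gamma}\,\rho\sqrt{2R\kappa_{\hess}}\,\sqrt{\tau\cdot\dist(\Omega_k,\xi)}.
\end{equation*}
Setting $\eta := \gamma^2\alpha^2\tau/(2\kappa_\nabla^2 R\kappa_{\hess}\rho^2)$, the condition $\dist(\Omega_k,\xi)\leq\eta$ is then sufficient for $\dist(\xi,X_k)\leq\alpha\tau$. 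Since the grids $(\Omega_k)$ are nested, this density condition persists for all later iterations, so we permanently enter the $\alpha\tau$-regime; it then remains only to bound the number of iterations needed to reach density $\eta$.

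My plan for the counting step is a packing argument. Each iteration after $N$ adds one new point per ball, so all added points lie in $\bigcup_i B(\xi_i,\tau)$. Using Lemma~\ref{lem:SupVsq} together with Proposition~\ref{prop:qVsRegime}, I would show that during any iteration not yet in the $\alpha\tau$-regime, at least one newly added point must lie at significant distance from all existing grid points, yielding a separation property. A careful volumetric count---covering $\bigcup_i B(\xi_i,\tau)$ by cubes of diameter of the order $\eta$---then bounds the number of such iterations by $\lceil A/\alpha^{2d}\rceil$, with the constant $A$ absorbing the geometric factors; one additional iteration is then sufficient to certify the regime. The main obstacle I expect is making the separation/packing argument quantitative enough to yield the precise constant $A$: it is the square-root scaling of Lemma~\ref{lem:gridVsQ} (as opposed to the linear scaling of Lemma~\ref{lem:SupVsq}) that drives the $\alpha^{-2d}$ rather than an $\alpha^{-d}$ dependence in the iteration count, and tracking both auxiliary lemmata together is what produces the factor $(\rho\sqrt{R\kappa_{\hess}}\kappa_\nabla/\gamma)^{3d}$ in $A$.
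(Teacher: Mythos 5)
Your proposal follows essentially the same route as the paper's proof. Part 1 is identical: $q_k \to q^\star$ plus Proposition~\ref{prop:qVsRegime}. For Part 2, you isolate the same density threshold $\eta = \gamma^2\alpha^2\tau/(2\kappa_\nabla^2 R\kappa_{\hess}\rho^2)$ on $\dist(\Omega_k,\xi)$ via Lemma~\ref{lem:gridVsQ}, invoke the nestedness of the grids for persistence, and invoke a packing/covering bound on the iterations needed to drive $\dist(\Omega_k,\xi)$ below $\eta$; the paper instead runs the chain forward -- covering argument gives $\dist(\Omega_k,X_k)\le\Delta$ with $\Delta = \bigl(\gamma/(\kappa_\nabla\rho\sqrt{R\kappa_{\hess}})\bigr)^3\alpha^2\tau/2$, then Lemma~\ref{lem:SupVsq}, Proposition~\ref{prop:qVsRegime}, nestedness and Lemma~\ref{lem:gridVsQ} -- but this is the same chain written in the other direction, and both produce the same $\Delta\sim\alpha^2\tau$ and hence the same $A$. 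One small logical slip worth tightening: the separation should be extracted from the monotone quantity $\dist(\Omega_k,\xi)>\eta$ (which, via Proposition~\ref{prop:qVsRegime} and Lemma~\ref{lem:SupVsq}, forces $\dist(\Omega_{k-1},X_{k-1})>\Delta$), rather than directly from the statement "not yet in the $\alpha\tau$-regime," which is a condition on future iterates; you actually do set up the former ($\eta$ and nestedness), so this is a phrasing issue rather than a gap. Your observation that the square-root scaling of Lemma~\ref{lem:gridVsQ} is responsible for the $\alpha^{-2d}$ (rather than $\alpha^{-d}$) rate, and that chaining Lemmas~\ref{lem:SupVsq} and~\ref{lem:gridVsQ} together produces the exponent $3d$ in $A$, is exactly the mechanism in the paper's computation of $\Delta$.
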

\begin{proof} 
Note that for any $\delta \le \bar \tau_0$, if there exists $p\in \N$ such that 
\begin{equation}
\label{eq::entering:tau:regime}
\|q_k-q^\star\|_2 \leq \frac{\gamma}{\kappa_{\nabla}}\delta \quad \text{ for all }k\geq p,
\end{equation} we will enter an $\delta$-regime after iteration $p$ by applying Proposition \ref{prop:qVsRegime}.  

To prove $(1)$, note that we without loss of generality can assume that $\tau \le \bar \tau_0$ (since entering a $\tau$-regime means in particular entering a $\tau'$-regime for any $\tau'\ge \tau$.) Then , since $\norm{q_k-q^\star}_2$ tends to zero as $k$ goes to infinity,
\eqref{eq::entering:tau:regime} with $\delta =\tau$ is true after a finite number of iterations.

To prove $(2)$, we proceed as follows :
Proposition \ref{prop:Good_Localization} ensures that in each iteration, exactly one point is added in each ball $\{x\in \Omega, \|x-\xi_i\|_2\leq \tau\}$. Let $k_0$ be the actual iteration, a covering number argument \cite{vapnik2013nature} ensures, for any $\Delta$ that after $\delta_0 =\left\lceil 2d^{d/2}\left(\frac{\tau}{\Delta} \right)^d\right\rceil$ iterations, each point in $X_k$ needs to lie at a distance at most $\Delta$ from $\Omega_k$, i.e., $\dist(\Omega_{k},X_k)\leq \Delta$.

Now, if we choose $\Delta=\left(\frac{\gamma}{\kappa_\nabla\rho \sqrt{R\kappa_{hess}}}\right)^3\frac{\alpha^2\tau}{2}$, Lemma \ref{lem:SupVsq} together with Proposition \ref{prop:qVsRegime} imply
\begin{align*}
 \dist(\Omega_{k_0+\delta_0+1},\xi)\leq \dist(X_{k_0+\delta_0},\xi) \leq \frac{\kappa_\nabla}{\gamma}  \rho\sqrt{R\kappa_{hess}} \dist(\Omega_{k_0+\delta_0},X_{k_0+\delta_0}) \leq \left( \frac{\gamma\alpha}{\kappa_{\nabla}\rho} \right)^2\frac{\tau}{2R\kappa_{hess}}
\end{align*}

Since $\Omega_{k+1}\subset \Omega_k$ for all $k$, the distance $\dist(\Omega_k,\xi)$ is non-increasing. As a result $\dist(\Omega_{k},\xi)\leq \left( \frac{\gamma\alpha}{\kappa_{\nabla}\rho} \right)^2\frac{\tau}{2R\kappa_{hess}}$  for all $k\geq k_0+\delta_0+1$. Since we are in $\tau$-regime, we know that $\dist(X_k,\xi)\leq \tau$ and $\dist(\Omega_k,\xi)\leq \tau$. Hence we can apply Lemma \ref{lem:gridVsQ} to obtain that 
\begin{align*}
\|q_k-q^\star\|_2\leq \sqrt{2R\kappa_{\hess} \tau \cdot \dist(\Omega_k,\xi)} \rho \leq \frac{\gamma}{\kappa_\nabla}\alpha \tau.
\end{align*}
Then inequality \eqref{eq::entering:tau:regime} is satisfied with $\delta=\alpha\tau$ and the algorithm enters a $\alpha\tau$-regime.
\end{proof}

The main result will tell us how many iterations we need to enter a $\tau$-regime. 

\begin{theo} \label{th:MainExchange}
Let $\tau \leq \bar{\tau}_0\eqdef\frac{\kappa_\nabla}{R\gamma}c_q$ and $k_0$ be the iteration on which the algorithm enters a $\bar \tau_0$-regime. Then $k_0 < \infty$, and the algorithm will enter a $\tau$-regime after no more than $k_0 + k_\tau$ iterations, where
\begin{align*}
    k_\tau :=  \left\lceil e 2^{d+1}d^{d/2}\left( \frac{\rho \sqrt{R\kappa_{\hess}}\kappa_{\nabla}}{\gamma}\right)^{3d}+1\right\rceil  \left\lceil 2d \log\left( \frac{\bar \tau_0}{\tau}\right) \right\rceil.
\end{align*}
Additionally, we will have
\begin{align}
\norm{q_k - q_*}_2 &\leq \tau \sqrt{2R\kappa_{\mathrm{hess}}}\rho \nonumber \\
\inf \eqref{eq:discretePrimal} &\leq \inf \eqref{eq:primal} +  \frac{2R\kappa_{\hess}\rho^2}{L} \cdot \tau^2  \label{eq:cvrate_cost_primal}
    \end{align}
    for $k \geq k_0+k_\tau+1$.
In other words, the algorithm will eventually converge linearly.
\end{theo}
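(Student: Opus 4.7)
The strategy is to bootstrap from the $\bar{\tau}_0$-regime down to the $\tau$-regime by iterating part (2) of Lemma \ref{lem:regimeChange}. First I would invoke part (1) of the same lemma (applied with $\tau = \bar\tau_0$) to conclude $k_0 < \infty$. From iteration $k_0$ on, the algorithm satisfies all hypotheses needed for part (2) to be applicable repeatedly.

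The main computation is the following. I would fix a contraction factor $\alpha \in (0,1)$ and apply part (2) recursively: having entered an $\alpha^{j-1}\bar\tau_0$-regime (with $j\geq 1$), part (2) guarantees entry into an $\alpha^j \bar\tau_0$-regime after at most $\lceil A/\alpha^{2d}\rceil + 1$ further iterations. The number of such rounds needed to reach a $\tau$-regime is any integer $n$ with $\alpha^n \bar\tau_0 \leq \tau$, i.e.\ $n = \lceil \log(\bar\tau_0/\tau)/\log(1/\alpha)\rceil$. The total iteration cost is therefore bounded by
\[
   \left(\left\lceil A/\alpha^{2d}\right\rceil + 1\right)\cdot \left\lceil \frac{\log(\bar\tau_0/\tau)}{\log(1/\alpha)}\right\rceil .
\]
The two factors balance when $\alpha^{2d} = e^{-1}$, i.e.\ $\alpha = e^{-1/(2d)}$: this yields per-round cost $\lceil e A\rceil + 1$ and $n = \lceil 2d \log(\bar\tau_0/\tau)\rceil$, whose product is exactly the $k_\tau$ stated.

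For the auxiliary bounds on $\Vert q_k - q^\star\Vert_2$ and $\inf \eqref{eq:discretePrimal}$, I would note that once the algorithm is in a $\tau$-regime at iteration $k-1$, one has $\dist(\xi, X_{k-1}) \leq \tau$; since the update rule gives $X_{k-1} \subseteq \Omega_k$, this transfers to $\dist(\xi, \Omega_k) \leq \tau$. This is precisely why the claim is stated for $k \geq k_0 + k_\tau + 1$. Now Lemma \ref{lem:gridVsQ} applied with $\delta = \tau$ delivers $\Vert q_k - q^\star\Vert_2 \leq \rho\sqrt{2R\kappa_{\hess}}\,\tau$ and $f^*(q^\star)-f^*(q_k) \leq 2R\kappa_{\hess}\rho^2 \tau^2 / L$. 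Strong duality of the discretized problem (Proposition~\ref{prop:duality}), together with the optimality $q_k\in \mathcal{D}(\Omega_k)$ and the fact that $\mathcal{P}(\Omega_k)$ is a restriction of \eqref{eq:primal}, converts the latter into the bound \eqref{eq:cvrate_cost_primal}.

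The only genuinely delicate point is the optimization over $\alpha$: any fixed $\alpha$ already yields \emph{some} linear rate, but reproducing the precise constant in $k_\tau$ forces the choice $\alpha = e^{-1/(2d)}$. Everything else is bookkeeping that invokes Lemma \ref{lem:regimeChange}, Lemma \ref{lem:gridVsQ}, and Proposition \ref{prop:Good_Localization} (which ensures the regime is well-defined via $\dist(X_k,\xi) = \dist(\xi, X_k)$ once past iteration $N$, absorbed into $k_0$).
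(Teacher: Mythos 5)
Your proposal is correct and follows essentially the same route as the paper: invoke Lemma~\ref{lem:regimeChange}(1) for $k_0<\infty$, bootstrap via Lemma~\ref{lem:regimeChange}(2) with the specific choice $\alpha=e^{-1/(2d)}$, and conclude the auxiliary bounds from Lemma~\ref{lem:gridVsQ} with $\delta=\tau$ together with strong duality. One small notational slip worth fixing: you write that being in a $\tau$-regime at iteration $k-1$ together with $X_{k-1}\subseteq\Omega_k$ transfers to $\dist(\xi,\Omega_k)\leq\tau$, but adding points to $\Omega_k$ can only \emph{increase} $\dist(\xi,\Omega_k)$; what you actually get (and what Lemma~\ref{lem:gridVsQ} needs) is $\dist(\Omega_k,\xi)\leq\dist(X_{k-1},\xi)\leq\tau$, using the fact that after iteration $N$ (absorbed into $k_0$, as you note) Proposition~\ref{prop:Good_Localization} equates $\dist(X_{k-1},\xi)$ with $\dist(\xi,X_{k-1})$.
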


\begin{proof}
 The fact that $k_0< \infty$ is the first assertion of Lemma~\ref{lem:regimeChange}. As for the other part, we argue as follows: Fix $\alpha \in (0,1)$. Since we have entered a $\bar \tau_0$-regime at iteration $k_0$, Lemma \ref{lem:regimeChange} implies that it will take no more than  $\left\lceil \frac{A}{\alpha^{2d}} \right\rceil  +1$ additional iterations to enter a $\alpha \bar \tau_0$. Repeating this argument, we see that after no more than
 \begin{align*}
    n \cdot \left(\left\lceil \frac{A}{\alpha^{2d}} \right\rceil  +1\right)
 \end{align*}
 iterations, we will have entered a $\alpha^n \bar \tau_0$ regime. Choosing $\alpha = e^{-1/2d}$ and $n  = \lceil 2d \log\left( \bar{\tau}_0/ \tau \right) \rceil$, we obtain the first statement.
 
 The second statement immediately follows from Lemma \ref{lem:gridVsQ} (as in the proof of Theorem~\ref{th:fixedGrid}) and the fact that entering a $\tau$-regime exactly amounts to that $\dist(X_k,\xi)\leq \tau$ for all future $k$, and therefore in particular $\dist(\Omega_{k+1},\xi)\leq \tau$.
\end{proof}

The inequality \eqref{eq:cvrate_cost_primal} upper-bounds the cost function for the problem \eqref{eq:discretePrimal}. 
In practice, the numerical resolution of this problem is hard since $\Omega_k$ contains clusters of points and in practice it is beneficial to solve the simpler discrete problem 
\begin{align*}
 \widehat{\mu}_k=\argmin_{\mu \in \calM(X_k)} \|\mu\|_{\calM} + f(A\mu) \tag{$\calP(X_k)$} \label{eq:simplePrimal}
\end{align*}
For this measure, we also obtain an a posteriori estimate of the convergence rate.
\begin{prop} Define $\widehat{\mu}_k$ as the solution of \eqref{eq:simplePrimal}, if $\dist(X_k,\xi)\leq \tau$, we have
\begin{equation} \label{eq:XkOptVal}
 J(\widehat{\mu}_k)\leq J(\mu^\star) + \left(\norm{\alpha^\star}_1 \frac{\kappa_{\hess} \norm{q^\star}_2}{2}  + \frac{L}{2} \|\alpha^\star\|_1^2 \kappa_\nabla^2\right) \tau^2. 
\end{equation}
\end{prop}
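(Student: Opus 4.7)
The plan is to upper bound $J(\widehat{\mu}_k)$ by constructing an explicit feasible test measure for the discrete problem \eqref{eq:simplePrimal} and computing its cost. Since $\widehat{\mu}_k$ is the minimizer over $\calM(X_k)$, any measure supported on $X_k$ gives a valid upper bound.

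First I would exploit the hypothesis $\dist(X_k,\xi)\le \tau$ to pick, for each $\xi_i$, a point $x_k^i \in X_k$ with $\|x_k^i-\xi_i\|_2\le \tau$ (such a point exists by the definition of $\dist$ in the paper). I define the test measure
\[ \tilde{\mu}_k \eqdef \sum_{i=1}^s \alpha_i^\star \delta_{x_k^i} \in \calM(X_k), \]
which immediately yields $J(\widehat{\mu}_k) \le J(\tilde{\mu}_k)$. Since the amplitudes are preserved, $\|\tilde{\mu}_k\|_\calM = \|\alpha^\star\|_1 = \|\mu^\star\|_\calM$, and the whole task reduces to comparing $f(A\tilde{\mu}_k)$ to $f(A\mu^\star)$.

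The bound on $f(A\tilde{\mu}_k)$ uses the $L$-smooth variation inequality \eqref{eq:LSmoothVariation} around $A\mu^\star$, combined with the primal-dual relationship $-q^\star\in\partial f(A\mu^\star)$ from Proposition~\ref{prop:duality} (note that Assumption~\ref{ass:grid} makes $f$ differentiable, so $f'(A\mu^\star)=-q^\star$). The linear term becomes
\[ \sprod{-q^\star, A\tilde{\mu}_k - A\mu^\star} = \sum_{i=1}^s \alpha_i^\star \bigl(A^*q^\star(\xi_i) - A^*q^\star(x_k^i)\bigr). \]
Here I would use that by Assumption~\ref{ass:dualCert}, each $\xi_i$ is a strict local extremum of $|A^*q^\star|$ with value $1$, so $(A^*q^\star)'(\xi_i)=0$. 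A second-order Taylor expansion then yields
\[ |A^*q^\star(x_k^i)-A^*q^\star(\xi_i)| \le \tfrac{1}{2}\|(A^*q^\star)''\|_\infty \|x_k^i-\xi_i\|_2^2 \le \tfrac{1}{2}\kappa_{\hess}\|q^\star\|_2 \tau^2, \]
where the last inequality uses the definition of $\kappa_{\hess}$. Summing with weights $|\alpha_i^\star|$ gives the first contribution $\|\alpha^\star\|_1\,\tfrac{\kappa_{\hess}\|q^\star\|_2}{2}\,\tau^2$.

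For the quadratic remainder, I would bound $\|A\tilde{\mu}_k-A\mu^\star\|_2 = \|\sum_i \alpha_i^\star(A(x_k^i)-A(\xi_i))\|_2$ by the triangle inequality and Proposition~\ref{prop:Lipschitz_A_Aprime}, yielding $\|A\tilde{\mu}_k-A\mu^\star\|_2 \le \|\alpha^\star\|_1 \kappa_\nabla \tau$, hence $\tfrac{L}{2}\|A\tilde{\mu}_k-A\mu^\star\|_2^2 \le \tfrac{L}{2}\|\alpha^\star\|_1^2\kappa_\nabla^2 \tau^2$. Adding the two contributions to $\|\tilde{\mu}_k\|_\calM + f(A\mu^\star) = J(\mu^\star)$ produces exactly \eqref{eq:XkOptVal}. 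There is no real obstacle here; the only point deserving care is justifying $(A^*q^\star)'(\xi_i)=0$, which follows from $|A^*q^\star|\le 1$ everywhere with equality at the interior point $\xi_i\in\Omega$ together with $|A^*q^\star(\xi_i)|=1\neq 0$.
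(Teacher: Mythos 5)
Your proof is correct and mirrors the paper's argument step for step: same test measure $\tilde\mu_k=\sum_i\alpha_i^\star\delta_{x_k^i}$, same use of $L$-smoothness around $A\mu^\star$, same identification $q^\star=-\nabla f(A\mu^\star)$ to turn the linear term into a sum of $\alpha_i^\star\bigl(A^*q^\star(\xi_i)-A^*q^\star(x_k^i)\bigr)$, and the same second-order Taylor bound using $(A^*q^\star)'(\xi_i)=0$ and $\|(A^*q^\star)''\|_\infty\le\kappa_{\hess}\|q^\star\|_2$.
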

\begin{proof}
For any $i$, denote $x^i_k$ a point in $X_k$ closest to $\xi_i$ and define $\tilde \mu_k = \sum_{i=1}^s \alpha_i^\star \delta_{x_k^i}$. We have $J(\widehat{\mu}_k)\leq J(\tilde \mu_k)$ and $\|\tilde \mu_k\|_\calM \le \|\mu^\star\|_\calM$. Furthermore, we have 
\begin{align*}
f(A\tilde \mu_k) & \leq f(A\mu^\star) + \langle \nabla f(A\mu^\star), A\tilde \mu_k - A\mu^\star\rangle + \frac{L}{2} \|A\tilde \mu - A\mu^\star\|_2^2.
\end{align*}
The last term in the inequality is dealt with the following estimate:
\begin{align*}
 \|A\tilde \mu - A\mu^\star\|_2\leq  \sum_{i=1}^s |\alpha_i^\star| \|A(x_k^i)-A(\xi_i)\|_2 
 \leq \sum_{i=1}^s |\alpha_i^\star| \kappa_\nabla\|x_k^i-\xi_i\|_2 \leq \|\alpha^\star\|_1 \kappa_\nabla\tau.
\end{align*}
As for the penultimate term, remember that $q^\star = - \nabla f(A\mu^\star)$. This implies
\begin{align*}
    \sprod{\nabla f(A\mu^\star), A \tilde{\mu}_k - A \mu^\star} = \sprod{A^*q^\star, \mu^\star - \tilde{\mu}_k} = \sum_{i=1}^s \alpha_i^\star \left((A^*q^\star)(\xi_i)-A^*q^\star(x_k^i) \right)
\end{align*}
By making a Taylor expansion of $A^*q^\star$ in each $\xi_i$, utilizing that the derivative vanishes there, and that $\norm{(A^*q^\star)''(x)} \leq \kappa_{\hess} \norm{q^\star}_2$ for each $x \in \Omega$, we see that $\abs{(A^*q^\star)(x_k^i)-(A^*q^\star)(\xi_i)} \leq \frac{\kappa_{\hess} \norm{q^\star}_2}{2}\norm{x_k^i-\xi_i}_2^2$ for each $i$. This yields
\begin{align*}
     \sprod{\nabla f(A\mu^\star), A \tilde{\mu}_k - A \mu^\star} \leq \norm{\alpha^\star}_1 \frac{\kappa_{\hess} \norm{q^\star}_2 \tau^2}{2}.
\end{align*}
Overall, we obtain 
\begin{align*}
J(\widehat{\mu}_k)\leq J(\tilde \mu_k) = \|\tilde \mu_k\|_\calM + f(A\tilde \mu_k) \leq J(\mu^\star) + \norm{\alpha^\star}_1 \frac{\kappa_{\hess} \norm{q^\star}_2 \tau^2}{2} + \frac{L}{2} \|\alpha^\star\|_1^2 \kappa_\nabla^2\tau^2.
\end{align*} 
\end{proof}

\section{Convergence of continuous methods} \label{sec:GradientDescent}

 \label{sec:nonConvex}

%The exchange method tends to run into numerical issues for high values of $k$. The reason is the following: as iterations go by, the sets $\Omega_k$ contain denser clusters of points around the points in $\xi$. Since $A$ is continuous, some vectors in the set $\{A(x), x\in \Omega_k\}$ get highly correlated, which leads to numerical issues. A simple idea to tackle this problem is to stop the exchange algorithm after a few iterations and start a gradient descent on the function
In this section, we study an alternative algorithm that consists of using nonlinear programming approaches to minimize the following finite dimensional problem:
\begin{align}
    G(\alpha,X) \eqdef J\left(\sum_{i=1}^p\alpha_i\delta_{x_i}\right) = \norm{\alpha}_1 +f\left(A\left(\sum_i \alpha_i \delta_{x_i} \right) \right), \label{eq:defG}
\end{align}
where $X=(x_1,\hdots, x_p)$.
This principle is similar to continuous methods in semi-infinite programming \cite{rettich98} and was proposed specifically for total variation minimization in \cite{boyd2017alternating,denoyelle2018sliding,traonmilin2018basins,chizat2018global}. By Proposition \ref{prop:Good_Localization}, we know that after a finite number of iterations, $X_k$ will contain exactly $s$ points located in a neighborhood of $\xi$. This motivates the following hybrid algorithm:
\begin{itemize}
 \item Launch the proposed exchange method until some criterion is met. This yields a grid $X^{(0)}=X_k$ and we let $p=|X_k|$.
 \item Find the solution of the finite convex program 
 \begin{align*}
\alpha^{(0)} = \min_{\alpha \in \R^p} G(\alpha, X^{(0)}).
 \end{align*}
 \item Use the following gradient descent:
 \begin{align}\label{eq:continuous_gradient}
(\alpha^{(t+1)}, X^{(t+1)})= (\alpha^{(t+1)}, X^{(t+1)}) - \tau \nabla G( \alpha^{(t)}, X^{(t)}),
 \end{align} 
 where $\tau$ is a suitably defined step-size (e.g. defined using Wolfe conditions).
\end{itemize}

We tackle the following question: does the gradient descent algorithm converge to the solution if initialized well enough? 

%When using this method, two questions arise: when to stop the exchange algorithm and does the gradient descent algorithm converge to the solution if initialized well enough? We answer those questions in the next paragraphs.

% \subsection{Stopping criterion for the exchange algorithm}
% \label{sec:stopping_criterion}
% 
% One of the major issues for the use of hybrid algorithms is the stopping criterion. Unless some prior assumption on the problem's solution is available (in our case, the values of $\tau_0$ and $\gamma$ in Assumption \ref{ass:dualCert}), there is no clear swapping rule. 
% Lemma \ref{lem:SupVsq} however shows that the quantity $\dist(\Omega_k, X_k)$ - which can be computed easily - provides strong certifications on the current solution $q_k$ in terms of violation of the constraints, cost function and distance to the minimizer $q^\star$. We therefore recommend the rule: run the exchange algorithm until $\dist(\Omega_k, X_k)\leq \eta$ for some parameter $\eta>0$.

\subsection{Existence of a basin of attraction}
This section is devoted to proving the existence of a basin of attraction of a descent method in $G$. Under two additional assumptions, we state our result in Proposition~\ref{prop:G}.

\begin{assumption} \label{ass:strongConv}
 The function $f$ is twice differentiable and $\Lambda$-strongly convex. 
 \end{assumption}

 The twice differentiability assumption is mostly due to convenience, but the strong convexity is crucial. The  second assumption is related to the structure of the support $\xi$ of the solution $\mu^\star$. 

\begin{assumption} \label{ass:TransMatrix}
For any $x,y \in \Omega$ denote $K(x,y)=\sum_{\ell} a_\ell (x)a_\ell (y)$.
The \emph{transition matrix}
\begin{align*}
     T(\xi)=\begin{bmatrix}
       [K(\xi_i,\xi_j)]_{i,j=1}^s &[\nabla_x K(\xi_i,\xi_j)^*]_{i,j=1}^s \\
        [\nabla_x K(\xi_i, \xi_j)]_{i,j=1}^s &[\nabla_x \nabla_y K(\xi_i,\xi_j)^*]_{i,j=1}^s
    \end{bmatrix} \in \R^{s+sd,s+sd} .
\end{align*}
is assumed to be positive definite, with a smallest eigenvalue larger than $\Gamma>0$. 
\end{assumption}
It is again possible to prove for many important operators $A$ that this assumption is satisfied if the set  $\xi$ is separated. See the references listed in the discussion about Assumption \ref{ass:dualCert}. The following proposition describes the links between minimizing $G$ and solving \eqref{eq:primal}.
% For $\mu^*$, remembering that $q^* = - \nabla f(A\mu^*)$ (gradient vanishes)
%  \begin{align*}
%  \frac{\partial^2 G}{\partial^2 \alpha} (\beta, \gamma) &=   f''(A\mu^*)(A(\xi) \beta,A(\xi)\gamma) \\ \frac{\partial^2 G}{\partial \alpha \partial x} (\beta,\updelta) &= f''(A\mu^*) (A(\xi) \beta, A'(\xi)D(\alpha)\updelta) - \sum_{i}(A^*q^*)'(\xi_i) \beta_i \updelta_i \\
%   \frac{\partial^2 G}{\partial^2 x} (\updelta, \upepsilon) &=   f''(A\mu^*)(A'(\xi)D(\alpha)\updelta ,A'(\xi_i)D(\alpha)\upepsilon) - \sum_i \alpha_i(A^*q^*)''(\xi_i)(\updelta_i, \upepsilon_i)
% \end{align*}

\begin{prop} \label{prop:G} 
Let $\mu^\star=\sum_{i=1}^s \alpha^\star_i \delta_{\xi_i} \neq 0$ be the solution of \eqref{eq:primal}. Under Assumption \ref{ass:strongConv} and  \ref{ass:TransMatrix}, $(\alpha^\star,\xi)$ is global minimum of $G$.  Additionally, $G$ is differentiable with a Lipschitz gradient and strongly convex in a neighborhood of $(\alpha^\star,\xi)$. 

Hence, there exists a basin of attraction around $(\alpha^\star,\xi)$ such that performing a gradient descent on $G$ will yield the solution of \eqref{eq:primal} at a linear rate. 
%Said basin includes the set of $(\alpha,X)$ with  $\norm{x_i - \xi_i}_2 \leq \tau_0$ for each $i$ and
% \begin{align*}
%  \Lambda(\Gamma- G_1(X)) - G_2(\alpha,X) \geq 0 \quad \text{ and } \quad \gamma \abs{\alpha_i} + \Lambda(\Gamma- G_1(X)) \alpha_i^2 - G_2(\alpha,X) \geq 0 \text{ for all $i$},
% \end{align*}
% where we defined
% \begin{align*}
%  G_1(X)& \eqdef\frac{s}{R^2} \max_i\norm{x_i-\xi_i}_2  \sqrt{\kappa^2+\kappa_\nabla^2} \sqrt{\kappa_{\hess}^2+\kappa_\nabla^2},\\
%  G_2(\alpha, X) &\eqdef 2\kappa_{\hess} \max_i\norm{x_i-\xi_i}_2 + \frac{(2s\kappa_\nabla+ \norm{\alpha}_1 \kappa_{\hess})L}{R}\left(\frac{\kappa}{R} \norm{\alpha-\alpha^\star}_1 +\frac{\kappa_\nabla}{R}\max_i\norm{x_i-\xi_i}_2\norm{\alpha^\star}_1\right).
% \end{align*}
\end{prop}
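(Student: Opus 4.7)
The plan is to verify each claim of the proposition in turn, with the main technical effort going into proving that $\nabla^2 G(\alpha^\star, \xi) \succ 0$. That $(\alpha^\star, \xi)$ is a global minimum is immediate: every pair $(\alpha, X) \in \R^p \times \Omega^p$ parametrizes a measure $\mu = \sum_i \alpha_i \delta_{x_i}$, so $G(\alpha, X) = J(\mu) \geq J(\mu^\star) = G(\alpha^\star, \xi)$ by the optimality of $\mu^\star$. For local smoothness, because $\alpha_i^\star \neq 0$ for every $i$ (otherwise $\mu^\star$ would be expressible with fewer than $s$ atoms, contradicting Assumption~\ref{ass:dualCert}), there is a neighborhood $U$ of $\alpha^\star$ on which $\|\alpha\|_1 = \sum_i \sgn(\alpha_i^\star)\,\alpha_i$ is linear; combined with $C^2$ regularity of $f$ (Assumption~\ref{ass:strongConv}) and of the functionals $a_\ell$ (Assumption~\ref{ass:regf}), this makes $G$ of class $C^2$ on $U \times \Omega^s$, with Lipschitz gradient on any sufficiently small compact neighborhood of $(\alpha^\star, \xi)$.

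Next, I would verify $\nabla G(\alpha^\star, \xi) = 0$. The primal-dual relation \eqref{eq:primal_dual_relationship} yields $f'(A\mu^\star) = -q^\star$ and $(A^*q^\star)(\xi_i) = \sgn(\alpha_i^\star)$, while Assumption~\ref{ass:dualCert} forces $(A^*q^\star)'(\xi_i) = 0$ since each $\xi_i$ is an interior local maximizer of $|A^*q^\star|$. A direct computation of $\partial_{\alpha_i} G$ and $\partial_{x_i} G$ then shows that both vanish. Writing $B(\alpha, X) = \sum_i \alpha_i A(x_i)$, one has
\begin{equation*}
    \nabla^2 G(\alpha^\star, \xi) = DB^\top f''(A\mu^\star)\, DB - \sum_{\ell=1}^m q^\star_\ell\, \nabla^2 B_\ell(\alpha^\star, \xi).
\end{equation*}
The columns of $DB$ are $A(\xi_i)$ and $\alpha_i^\star \partial_k A(\xi_i)$, so a direct computation shows that $DB^\top DB = D_{\alpha^\star} T(\xi) D_{\alpha^\star}$, where $D_{\alpha^\star} = \diag(I_s, \alpha_1^\star I_d, \ldots, \alpha_s^\star I_d)$ is invertible. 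Combined with $f''(A\mu^\star) \succcurlyeq \Lambda\,\id$ and Assumption~\ref{ass:TransMatrix}, this gives $DB^\top f''(A\mu^\star) DB \succcurlyeq \Lambda \Gamma \min_i(1, (\alpha_i^\star)^2)\,\id$.

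For the second term in $\nabla^2 G$: the $(\alpha, \alpha)$-block of $\nabla^2 B_\ell$ vanishes since $B$ is linear in $\alpha$; the $(\alpha_i, x_j)$-block contracted with $-q^\star$ equals $-\delta_{ij}(A^*q^\star)'(\xi_i) = 0$; and the $(x_i, x_j)$-block vanishes for $i \ne j$ while for $i = j$ it equals $-\alpha_i^\star (A^*q^\star)''(\xi_i) = -|\alpha_i^\star|\,|A^*q^\star|''(\xi_i) \succcurlyeq |\alpha_i^\star|\gamma\,\id$ by Assumption~\ref{ass:dualCert} and the sign agreement $\sgn((A^*q^\star)(\xi_i)) = \sgn(\alpha_i^\star)$. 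Hence the second term is positive semidefinite, and adding it to the first yields $\nabla^2 G(\alpha^\star, \xi) \succ 0$. By continuity of $\nabla^2 G$, there is an open ball around $(\alpha^\star, \xi)$ on which $G$ is $C^2$ and strongly convex with Lipschitz gradient; classical convergence theory for gradient descent with Wolfe step sizes then produces a basin of attraction (a sublevel set contained in that ball) within which \eqref{eq:continuous_gradient} converges linearly to $(\alpha^\star, \xi)$. The main obstacle is the bookkeeping of the Hessian: once the critical point condition is used to dispose of the cross terms, Assumptions~\ref{ass:dualCert} and \ref{ass:TransMatrix} combine cleanly to furnish positive definiteness.
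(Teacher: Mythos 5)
Your proof is correct and rests on the same decomposition of the Hessian into the ``$f''$'' part and the ``$\nabla f$'' part that the paper uses, but it takes a slightly different, cleaner route. You evaluate $\nabla^2 G$ only at the critical point $(\alpha^\star,\xi)$, where $(A^*q^\star)'(\xi_i)=0$ makes the $(\alpha,X)$ cross-terms of the second contribution vanish identically, so that its only surviving blocks are $-\alpha_i^\star(A^*q^\star)''(\xi_i)\succcurlyeq|\alpha_i^\star|\gamma\,\id$, and the factorization $DB = M_2(\xi)D_{\alpha^\star}$ together with $f''\succcurlyeq\Lambda\id$ and $T(\xi)\succcurlyeq\Gamma\id$ makes the first contribution strictly positive definite; positive definiteness on a neighborhood then follows from continuity of $\nabla^2 G$. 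The paper instead writes the Hessian $H_1(\alpha,X)+H_2(\alpha,X)$ at an \emph{arbitrary} nearby point (where the cross terms do not vanish) and controls the perturbation quantitatively via explicit functions $G_1(X)$ and $G_2(\alpha,X)$ that tend to zero as $(\alpha,X)\to(\alpha^\star,\xi)$. Your version is shorter and more conceptually transparent, since the critical-point structure does the work that the paper's perturbation bounds accomplish by hand; the paper's version is messier but, in principle, furnishes an explicit radius for the basin of attraction (the condition $\Lambda(\Gamma-G_1(X))-G_2(\alpha,X)>0$), which a pure-continuity argument does not. Both establish the claim as stated. One small remark: for the step from pointwise strong convexity to linear convergence of the iterates, it is worth noting---as you do---that a $C^2$ function with locally positive-definite Hessian has a locally Lipschitz gradient, and that the gradient iteration needs to remain inside the good neighborhood; a sublevel-set argument, as you sketch, handles this.
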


The rest of this section is devoted to the proof of Proposition~\ref{prop:G}. Let us begin by stating a simple auxiliary result.
\begin{lem} \label{lem:posDefProd}
    Let $U$ and $V$ be vector spaces and $C: V \to V$ be a linear operator with $C \succcurlyeq \lambda \id_V$ for a $\lambda \geq 0$. Then, for any $B:U \to V$
    \begin{align*}
      B^*CB  \succcurlyeq \lambda B^*B.
    \end{align*}
\end{lem}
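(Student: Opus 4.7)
The plan is to unfold the operator inequality $C \succcurlyeq \lambda \id_V$ pointwise via the bilinear form it induces, then conjugate by $B$. Concretely, I would pick an arbitrary $u \in U$ and compute $\langle B^*CBu, u\rangle_U = \langle CBu, Bu\rangle_V$ using the defining property of the adjoint. Applying the hypothesis $C \succcurlyeq \lambda \id_V$ to the vector $Bu \in V$ then yields
\begin{align*}
\langle CBu, Bu\rangle_V \geq \lambda \langle Bu, Bu\rangle_V = \lambda \langle B^*Bu, u\rangle_U,
\end{align*}
and since this holds for every $u \in U$, the asserted operator inequality $B^*CB \succcurlyeq \lambda B^*B$ follows immediately.

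There is no real obstacle here: the whole proof is a one-line manipulation based on the definition of a positive (semi)definite operator as a bilinear form, combined with the adjoint identity. The only minor care to take is to note that $B$ need not be square or invertible, so the inequality is only of the form $B^*CB \succcurlyeq \lambda B^*B$ rather than $B^*CB \succcurlyeq \lambda \id_U$, which is exactly what the statement says. I would keep the write-up to two or three lines.
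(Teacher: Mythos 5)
Your proof is correct and takes exactly the same route as the paper: pick an arbitrary $u\in U$, use the adjoint identity to rewrite $\langle B^*CBu,u\rangle$ as $\langle CBu,Bu\rangle$, apply $C\succcurlyeq\lambda\id_V$ to the vector $Bu$, and fold back to conclude $B^*CB\succcurlyeq\lambda B^*B$. Nothing to add.
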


\begin{proof}
  If $B^*CB-\lambda B^*B$ is positive semidefinite, we claim holds. Since for $v \in U$ arbitrary
  \begin{align*}
        \sprod{(B^*CB-\lambda B^*B)v,v} = \sprod{C(Bv),Bv} - \lambda \sprod{Bv,Bv} \geq \lambda \norm{Bv}_V^2 - \lambda \norm{Bv}_V^2 =0,
        \end{align*}
 the latter is the case.
\end{proof}
Let us introduce some notation that will be used in this section: for an $X=(x_1,\hdots,x_p) \in \Omega^p$ for some $p$, $A(X)$ denotes the matrix $[a_i(x_j)]$. Analogously, $A'(X)$ and $A''(X)$ denote the operators
\begin{align*}
 A'(X) : (\R^d)^p \to \R^m, (v_i)_{i=1}^p \mapsto  \left(\sum_{i=1}^p \partial_x a_j(x_i)v_i\right)_j, \quad A''(X): (\R^d \times \R^d)^p \to \R^m, (v_i,w_i)_{i=1}^p \mapsto \sum_{i=1}^p A''(x_i)[v_i,w_i]
\end{align*}
respectively. Note that for $q \in \R^m$ and $X \in \Omega^p$,
\begin{align*}
 A(X)^*q &= ((A^*q)(x_i))_{i=1}^p \eqdef (A^*q)(X) \in \R^p \\
 A'(X)^*q &= (\nabla (A^*q)(x_1), \dots, \nabla (A^*q)(x_p)) \in (\R^d)^p \\
 A''(X)^*q &= ((A^*q)''(x_1), \dots , (A^*q)''(x_p)) \in (\R^d \times \R^d)^p
\end{align*}
 We will also use the shorthands $\mu= \sum_{i} \alpha_i \delta_{x_i}$, $G_f(\alpha,X) = f(A\mu)$, and, for $\alpha \in \R^p$, $D(\alpha)$ denotes the operator
\begin{align*}
    D(\alpha): (\R^d)^p \to (\R^d)^p, (v_i)_{i=1}^p \mapsto (\alpha_i v_i)_{i=1}^p.
\end{align*}
 We  have
   \begin{align*}
        \frac{\partial G_f}{\partial \alpha}(\alpha,X) \beta &= \sprod{\nabla f(A\mu),A(X)\beta} \\ 
        \frac{\partial G_f}{\partial X} \updelta &= \sprod{\nabla f(A\mu),  A'(X)D(\alpha)\updelta},
        \end{align*}
     so that in points $(\alpha,X)$ with $\alpha_i \neq 0$ for all $i$, and in particular in a neighborhood of $(\alpha^\star,\xi)$, $G$ is differentiable and its gradient is given by :
        \begin{align}
        \label{eq:formula:gradient:G}
            \R^p \times (\R^p)^d \ni \nabla G(\alpha,X) =  \left(\sgn(\alpha) - (A^*q)(X), -D(\alpha)(A^*q)'(X)  \right), \quad \text{ with } q=-\nabla f(A\mu).
        \end{align}
  
        As for the second derivatives, we have 
        \begin{align*}
 \frac{\partial^2 G_f}{\partial^2 \alpha}(\alpha,X) [\beta, \gamma] &=   f''(A\mu)(A(X) \beta,A(X)\gamma) \\ 
 \frac{\partial^2 G_f}{\partial \alpha \partial X}(\alpha, X)[ \beta,\updelta] &= f''(A\mu) (A(X) \beta, A'(X)D(\alpha)\updelta) + \sprod{\nabla f(A\mu), A'(X)D(\beta) \updelta} \\
  \frac{\partial^2 G_f}{\partial^2 X} (\alpha,X)[\updelta, \upepsilon] &=   f''(A\mu)(A'(X)D(\alpha)\updelta ,A'(X)D(\alpha)\upepsilon) + \sprod{\nabla f(A\mu), A''(X)(D(\alpha) \updelta, \upepsilon)}.
\end{align*}

We may now prove our claims.
\begin{proof}[Proof \ref{prop:G}] First, let us note that due to the optimality conditions of \ref{eq:primal}, we know that 
\begin{align*}
    q^\star = - \nabla f(A\mu^\star).
\end{align*}
Letting $q=-\nabla f(A\mu)$, it is furthermore fruitful to decompose the Hessian of $G$ into two parts:
 \begin{align*}
    H_1(\alpha,X)&=  \begin{bmatrix}
                            A(X)^*f''(A\mu) A(X)  & A(X)^* f''(A\mu) A'(X)D(\alpha) \\
                            D(\alpha)^* A'(X)^*f''(A\mu)A(X) & D(\alpha)^*A'(X)^*f''(A\mu)A'(X)D(\alpha)
                        \end{bmatrix} \\
    H_2(\alpha,X) [(\beta,\updelta),(\gamma,\upepsilon)] &=  -\sum_{i=1}^s \beta_i (A^*q)'(x_i)\epsilon_i  + \gamma_i (A^*q)'(x_i)\updelta_i +\alpha_i(A^*q)''(x_i)[\updelta_i,\upepsilon_i] , 
 \end{align*}

Now, $\abs{A^*q^\star}$ has local maxima in the points $\xi_i$, so that $(A^*q^\star)'(\xi)=0$. In these points, we furthermore have that  $\sgn(\alpha_i^\star)= A^*q^\star(\xi_i)$, so that the gradient of $G$ given in \eqref{eq:formula:gradient:G} vanishes.

 To prove the rest, it is enough to show that the Hessian of $G_f$ is positive definite in a neighborhood around $(\alpha^\star,\xi)$. Let $(\alpha,X)$ be arbitrary.  $H_1$ is an operator of the form $M_1^*M_2(X)^*{\mathcal L} M_2(X)M_1$, with   ${\mathcal L}= f''(A\mu): \R^m \to \R^m$ and 
 \begin{align*}
    M_1= \begin{bmatrix}\id& 0 \\ 0 & D(\alpha)
                        \end{bmatrix} : \R^p \times (\R^d)^s \to \R^s \times (\R^d)^s, \quad  M_2(X)= \begin{bmatrix}A(X) & A'(X)
                        \end{bmatrix}: \R^s \times (\R^d)^s \to \R^m.
 \end{align*}
Due to the $\Lambda$-strong convexity of $f$, $\mathcal{L} \matgeq \Lambda \id$. We furthermore have
\begin{align*}
    M_1^*M_1 = \begin{bmatrix} \id & 0 \\ 0 & D(\alpha)^*D(\alpha) \end{bmatrix}
\end{align*}

Let us now turn to $M_2(X)^*M_2(X)$. If we define $M_2(\xi) = \begin{bmatrix}A(\xi) & A'(\xi)\end{bmatrix}$, we have
\begin{align*}
    M_2(\xi)^*M_2(\xi) = \begin{bmatrix} A(\xi)^*A(\xi)& A(\xi) A'(\xi)^*\\
                          A'(\xi)^*A(\xi) & A'(\xi)^*A'(\xi)^*
                         \end{bmatrix} = T(\xi) \succcurlyeq \Gamma \id
\end{align*}
by Assumption \eqref{ass:TransMatrix}. We however have
\begin{align*}
    \norm{M_2(X)^*M_2(X) -M_2(\xi)^*M_2(\xi)}_2 \leq \norm{M_2(X)^*}_2 \norm{M_2(X)-M_2(\xi)}_2 + \norm{M_2(X)-M_2(\xi)}_2 \norm{M_2(\xi)}_2.
\end{align*}
Now, by definition of $\kappa$ and $\kappa_\nabla$,
\begin{align*}
    \norm{M_2(\xi)^*}_2^2 = \sup_{\norm{q}_2\leq 1} \sum_{i=1}^s\norm{(A^*q)(\xi_i)}_2^2 + \norm{(A^*q)'(\xi_i)}_2^2 \leq s( \kappa^2 + \kappa_\nabla^2),
\end{align*}
and similarly for $ \norm{M_2(X)}_2 = \norm{M_2(X)^*}_2$. Also, we have, by \eqref{eq:kappa:taylor}:
\begin{align*}
    \norm{M_2(X)-M_2(\xi)}_2^2 
    &\leq  s(\kappa_\nabla^2+\kappa_{\hess}^2) \sup_i\norm{x_i-\xi_i}_2^2,
\end{align*}
so that all in all
\begin{align*}
    M_2^*M_2 \geq \Gamma - s \max_i\norm{x_i-\xi_i}_2  \sqrt{\kappa^2+\kappa_\nabla^2} \sqrt{\kappa_{\hess}^2+\kappa_\nabla^2}
\end{align*}
We may now apply Lemma \ref{lem:posDefProd} twice to conclude
\begin{align}
\label{eq:proofofG:H1:bound}
    H_1 &\succcurlyeq \Lambda\left(\Gamma - s \max_i\norm{x_i-\xi_i}_2  \sqrt{\kappa^2+\kappa_\nabla^2} \sqrt{\kappa_{\hess}^2+\kappa_\nabla^2} \right) \begin{bmatrix} \id & 0 \\ 0 & D(\alpha)^*D(\alpha) \end{bmatrix} \\ &=: \Lambda (\Gamma - G_1(X))    \begin{bmatrix} \id & 0 \\ 0 & D(\alpha)^*D(\alpha) \end{bmatrix},  \nonumber
\end{align}
where we defined
\begin{align*}
    G_1(X)&= s \max_{1 \leq i \leq s} \norm{x_i-\xi_i}_2 \sqrt{\kappa^2+\kappa_\nabla^2} \sqrt{\kappa_{\hess}^2 + \kappa_\nabla^2}
    \end{align*}
It remains to analyze $H_2$. Define the bilinear form 
\begin{align*}
\calH_2[(\beta,\updelta),(\gamma,\upepsilon)] := 
-\sum_{i=1}^s \beta_i (A^*q^\star)'(x_i)\updelta_i +\gamma_i (A^*q^\star)'(x_i)\upepsilon_i  + \alpha_i(A^* q^\star)''(x_i)[\updelta_i,\upepsilon_i].
\end{align*}
Then, if we define $w= \nabla f(A\mu) - \nabla f(A\mu^\star)=q^\star-q$, we have
\begin{align*}
   (H_2(\alpha,X) -\calH_2)[(\beta,\updelta),(\gamma,\upepsilon)] &=  \sum_{i=1}^s \beta_i (A^*w)'(x_i)\upepsilon_i  + \gamma_i (A^*w)'(x_i)\updelta_i + \alpha_i(A^* w)''(x_i)[\updelta_i,\upepsilon_i].
\end{align*}
This makes it evident that
\begin{align*}
    \norm{H_2(\alpha, X)- \calH_2}_{2\to 2} \leq  2s\norm{(A^\star w)'}_\infty+
    \norm{\alpha}_1 \norm{(A^\star w)''}_\infty
 \le 
    (2s\kappa_\nabla+ \norm{\alpha}_1 \kappa_{\hess}) \norm{w}_2.
\end{align*}
The $L$-Lipschitz gradient of $f$ proves that $\norm{w}_2\leq L \norm{A\mu-A\mu_*}_2$. Using \eqref{eq:kappa:taylor} yields directly:
\begin{align*}
    \norm{A\mu-A\mu^\star}_2 &\leq \norm{A(X)(\alpha-\alpha^\star)}_2 + \norm{A(X)\alpha^\star-A(\xi)\alpha^\star}_2 \\
    &\leq \kappa \norm{\alpha-\alpha^\star}_1 +\kappa_\nabla \max_i\norm{x_i-\xi_i}_2\norm{\alpha^*}_1.
\end{align*}
We still need to bound $\calH_2$.  First remember that Assumption \ref{ass:dualCert} asserts that for each $i$, $\sgn \alpha_i^\star (A^*q^\star)'' \matleq -\gamma \id$ and $\sgn(\alpha_i)=\sgn(\alpha_i^\star)$ in the ball of radius $\tau_0$ around $\xi_i$. Consequently, if $(\alpha,X)$ is chosen so that for each $i$, $\norm{x_i-\xi_i}_2\leq \tau_0$  we get $-\alpha_iA^*q^\star(x_i) \matgeq \abs{\alpha_i} \gamma \id$, and
\[\calH_2[(\beta,\updelta),(\beta,\updelta)] \geq \sum_{i=1}^s \gamma \abs{\alpha_i} \norm{\updelta_i}_2^2 -2\beta_i (A^*q^\star)'(x_i)\updelta_i\]
By definition of $\kappa_{\hess}$, we can further estimate
\[\Vert (A^*q^\star)'(x_i)-(A^*q^\star)'(\xi_i)\Vert_2 \le \kappa_{\hess} \norm{q^\star}{\norm{x_i-\xi_i}}_2.\]
Using $(A^*q^\star)'(\xi_i)=0$, we  obtain
\begin{align*}
    \calH_2 \succcurlyeq  \gamma\begin{bmatrix}0 & 0 \\ 0 & D(\abs{\alpha}) \end{bmatrix}- 2\norm{q^\star}_2\kappa_{\hess}s  \max_i\norm{x_i-\xi_i}_2 \id.
\end{align*}
If we now define
\begin{align*}
    G_2(\alpha,X)= \kappa \norm{\alpha-\alpha^\star}_1+ \kappa_\nabla \norm{x_i-\xi_i}_2 \norm{\alpha^\star}_1 + 2 \norm{q^\star} \kappa_{\hess}s \max_i \norm{x_i-\xi_i}_2,
\end{align*}
we obtain
\begin{align*}
 H_2(\alpha,X) &\succcurlyeq   \begin{bmatrix}-G_2(\alpha,X)\id & 0 \\ 0 & \gamma D(\abs{\alpha})-G_2(\alpha,X) \id \end{bmatrix}
\end{align*}
Further utilizing the definition of $G_1$ and \eqref{eq:proofofG:H1:bound}, we arrive at
\begin{align*}
    H_1(\alpha,X) + H_2(\alpha,X) \geq \begin{bmatrix}\left( \Lambda(\Gamma- G_1(X)) - G_2(\alpha,X)\right)\id & 0 \\ 0 & \gamma D(\abs{\alpha}) + \Lambda(\Gamma- G_1(X)) D(\alpha^2) - G_2(\alpha,X)\id\end{bmatrix}.
\end{align*}
Since $G_1(X), G_2(\alpha,X) \to 0$ for $\alpha \to \alpha^\star$ and $X \to \xi$, we obtain the claim.
\end{proof}

\subsection{Eventually entering the basin of attraction}
% It is well known that if we start a gradient descent around $(\alpha^\star,\xi)$ in a set in which $G$ is strictly convex, a gradient descent (with correct stepsize) will converge linearly towards $(\alpha^\star,\xi)$. 
The following proposition shows that $(\tilde{\alpha},X_k)$ defined as the amplitudes and positions of the Dirac-components of the solution $\widehat{\mu}$ of \eqref{eq:simplePrimal},
 $(\tilde{\alpha}, X_k)$ will lie in the basin described by Proposition \ref{prop:G}. This result is stated in Corollary~\ref{cor:alphaClose}, the rest of this section is dedicated to proving it.
 \begin{prop} \label{prop:alphaClose}
    Assume that Assumptions \ref{ass:strongConv}  and \ref{ass:TransMatrix} are true. Consider an $s$-sparse measure
    \begin{align*}
        \tilde{\mu} = \sum_{\ell=1}^s \tilde{\alpha}_\ell \delta_{\tilde{x}_\ell}
    \end{align*}
for some $\tilde{\alpha} \in \R^s$ and $(\tilde{x}_\ell)_{\ell=1\dots s}$ pairwise different points of $\Omega$. We then have
\begin{align*}
    \norm{\tilde{\alpha}-\alpha^\star}_2 \leq \frac{1}{\sqrt{\Gamma}} \left(\kappa_\nabla \norm{\tilde{\mu}}_\calM \sup_{\substack{1 \leq \ell \leq s}} \norm{\xi_\ell-\tilde{x}_\ell}_2 + \sqrt{\frac{2}{\Lambda}\left(J(\tilde{\mu})-J(\mu^\star)\right)}\right).
\end{align*}
 \end{prop}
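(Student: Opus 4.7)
The plan is to split the measurement residual into two natural pieces and then combine a lower bound coming from Assumption~\ref{ass:TransMatrix} with an upper bound coming from the strong convexity of $f$ together with the primal-dual optimality conditions.

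First, I would write the identity $A\tilde\mu - A\mu^\star = A(\xi)(\tilde\alpha - \alpha^\star) + (A(\tilde X) - A(\xi))\tilde\alpha$, where $A(\xi)$ and $A(\tilde X)$ denote the $m\times s$ matrices whose columns are the $A(\xi_i)$ and $A(\tilde x_i)$ respectively. Rearranging and using the triangle inequality gives
\begin{equation*}
\|A(\xi)(\tilde\alpha - \alpha^\star)\|_2 \leq \|A\tilde\mu - A\mu^\star\|_2 + \|(A(\tilde X) - A(\xi))\tilde\alpha\|_2.
\end{equation*}
The second term on the right is controlled by expanding the matrix-vector product and applying the Lipschitz estimate of Proposition~\ref{prop:Lipschitz_A_Aprime}: $\|(A(\tilde X) - A(\xi))\tilde\alpha\|_2 \leq \sum_\ell |\tilde\alpha_\ell|\,\|A(\tilde x_\ell) - A(\xi_\ell)\|_2 \leq \kappa_\nabla \|\tilde\mu\|_\calM \sup_\ell \|\xi_\ell - \tilde x_\ell\|_2$.

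For the left-hand side, I would note that the top-left $s \times s$ block of $T(\xi)$ is exactly the Gram matrix $A(\xi)^*A(\xi)$, since $K(\xi_i,\xi_j) = \sprod{A(\xi_i),A(\xi_j)}$. A principal submatrix of a matrix satisfying $T(\xi) \succcurlyeq \Gamma\id$ inherits the same lower bound (test against vectors of the form $(v,0)$), hence $A(\xi)^*A(\xi) \succcurlyeq \Gamma\id$, which yields $\|A(\xi)(\tilde\alpha - \alpha^\star)\|_2 \geq \sqrt{\Gamma}\,\|\tilde\alpha - \alpha^\star\|_2$.

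The remaining step, which I expect to be the most delicate, is to show that $\|A\tilde\mu - A\mu^\star\|_2^2 \leq \frac{2}{\Lambda}(J(\tilde\mu) - J(\mu^\star))$. Here I would apply $\Lambda$-strong convexity of $f$ at $A\mu^\star$, and then replace $\nabla f(A\mu^\star)$ by $-q^\star$ via Proposition~\ref{prop:duality}, giving
\begin{equation*}
f(A\tilde\mu) \geq f(A\mu^\star) + \sprod{A^*q^\star,\mu^\star - \tilde\mu} + \tfrac{\Lambda}{2}\|A\tilde\mu - A\mu^\star\|_2^2.
\end{equation*}
The key observation is that $A^*q^\star \in \partial\|\cdot\|_\calM(\mu^\star)$ implies $\sprod{A^*q^\star,\mu^\star} = \|\mu^\star\|_\calM$ and $\sprod{A^*q^\star,\tilde\mu} \leq \|\tilde\mu\|_\calM$ (using $\|A^*q^\star\|_\infty \leq 1$), so $\sprod{A^*q^\star,\mu^\star - \tilde\mu} \geq \|\mu^\star\|_\calM - \|\tilde\mu\|_\calM$. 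Adding $\|\tilde\mu\|_\calM$ to both sides converts this into $J(\tilde\mu) - J(\mu^\star) \geq \tfrac{\Lambda}{2}\|A\tilde\mu - A\mu^\star\|_2^2$. Combining the three inequalities and dividing by $\sqrt{\Gamma}$ produces the claimed bound.
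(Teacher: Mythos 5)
Your proof is correct and follows essentially the same route as the paper: the same algebraic split of $A\tilde\mu - A\mu^\star$ into an $A(\xi)(\tilde\alpha-\alpha^\star)$ piece and a position-perturbation piece, the same Lipschitz bound from Proposition~\ref{prop:Lipschitz_A_Aprime}, and the same strong-convexity-plus-optimality argument for $\|A\tilde\mu-A\mu^\star\|_2^2 \le \tfrac{2}{\Lambda}(J(\tilde\mu)-J(\mu^\star))$. The only cosmetic difference is that you lower-bound $\|A(\xi)v\|_2 \ge \sqrt\Gamma\|v\|_2$ directly from the top-left block of $T(\xi)$, whereas the paper phrases the same fact through the Moore--Penrose inverse $A(\xi)^\dagger$ having operator norm at most $\Gamma^{-1/2}$.
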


\begin{proof}
    Let $A(\xi)^\dagger$ be the Moore-Penrose inverse of $A(\xi) =[A(\xi_1), \dots, A(\xi_s)]$. Due to Assumption \ref{ass:TransMatrix}, $A(\xi)^\dagger$ has full rank and has an operator norm no larger than $\Gamma^{-1/2}$. Since
   \begin{align*}
        \tilde{\alpha} = \alpha^\star + A(\xi)^\dagger ( A(\xi) \tilde{\alpha} - A \tilde{\mu}) + A(\xi)^\dagger(A\tilde{\mu} - A(\xi)\alpha^\star),
    \end{align*} 
bounds on $A(\xi) \tilde{\alpha} - A \tilde{\mu}$ and $A\tilde\mu - A(\xi)\alpha^\star$ will therefore transform to a bound on $\tilde{\alpha}-\alpha^\star$.

Let us begin with the former. We have
\begin{align*}
    \norm{A(\xi) \tilde{\alpha}-A \tilde{\mu}}_2 \leq \sum_{\ell=1}^s \abs{\alpha_\ell} \norm{A(\xi_\ell) - A(\tilde{x}_\ell)} \leq \sum_{\ell=1}^s \kappa_\nabla \abs{\tilde{\alpha}_\ell} \norm{\xi_\ell-\tilde{x}_\ell}_2  = \kappa_\nabla \norm{\tilde{\alpha}}_1 \sup_{\substack{1 \leq \ell \leq s \\ \tilde{\alpha}_\ell \neq 0}} \norm{\xi_\ell-\tilde{x}_\ell}_2,
\end{align*}
where we used the Cauchy-Schwarz inequality in the last step.

To bound the latter, recall that $\Lambda$-strong convexity of $f$ means that
\begin{align} \label{eq:variation}
    f(A\tilde{\mu}) \geq f(A\mu^\star) + \sprod{\nabla f(A\mu^\star), A\tilde{\mu}-A \mu^\star} + \frac{\Lambda}{2} \norm{A\tilde{\mu}-A\mu^\star}_2^2.
\end{align}
The optimality conditions for \eqref{eq:primal} tell us that $q^\star= - \nabla f(A\mu^\star)$, and hence
\begin{align*}
     \sprod{\nabla f(A\mu^\star), A\tilde{\mu}-A \mu^\star} = \sprod{A^*q^\star, \mu^\star-\tilde{\mu})}= \sum_{\ell=1}^s \alpha_\ell^\star (A^* q^\star)(\xi_\ell) - \tilde{\alpha}_\ell A^*q^\star(\tilde{x}_\ell) \geq \norm{\alpha^\star}_1 - \norm{\tilde{\alpha}}_1,
\end{align*}
where we in the last step used that $\norm{A^*q^\star}_\infty \leq 1 $. Plugging the above inequality in \eqref{eq:variation} yields
\begin{align*}
    \frac{\Lambda}{2} \norm{A\tilde{\mu}-A\mu^\star}_2^2 \leq J(\tilde{\mu})- J(\mu^\star).
\end{align*}
The claim follows.
\end{proof}
\begin{cor}
\label{cor:alphaClose}
 By Proposition~\ref{prop:Good_Localization}, if $k$ is large enough  then $X_k$ contains exactly $s$ points. In this case, let $\widehat{\mu}_k=\sum_{i=1}^s \widehat{ \alpha}_i \delta_{\hat{x}_i^k}$ be the solution of \eqref{eq:simplePrimal}. Applying Proposition~\ref{prop:alphaClose}, recalling that $\max_{i} \norm{\xi_i-\hat{x}_i^k}_2 \le \dist(X_k,\xi)$ and using the bound \eqref{eq:XkOptVal}, we obtain :
    \begin{align*}
    \norm{\widehat{\alpha}-\alpha^\star}_2 \leq \frac{\dist(X_k,\xi)}{\sqrt{\Gamma}} \left(\kappa_\nabla \norm{\widehat{\mu}_k}_\calM  + \sqrt{\frac{2}{\Lambda}\left(\norm{\alpha^\star}_1 \frac{\kappa_{\hess} \norm{q^\star}_2}{2}  + \frac{L}{2} \|\alpha^\star\|_1^2 \kappa_\nabla^2\right)}\right).
\end{align*}
Since  $\dist(X_k,\xi)$ is guaranteed to eventually converge to zero by Theorem \ref{th:MainExchange} and $\Vert \widehat{\mu}_k \Vert_\calM$ are bounded 
( e.g. by lower boundedness of $f$ and upper boundedness of $J(\widehat\mu_k)$)
, $(\widehat{\alpha},X_k)$ will eventually lie in the basin of attraction of $G$.   
    
 \end{cor}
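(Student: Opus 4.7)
The plan is to assemble the statement directly from results already established in the paper. The corollary is essentially a book-keeping exercise that plugs concrete estimates into Proposition \ref{prop:alphaClose}.

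First I would invoke Proposition \ref{prop:Good_Localization}: for $k \geq N$, the set $X_k$ has exactly $s$ points, one in each ball $B_i$ of radius $\tau_0$ around $\xi_i$. This allows me to index $X_k = \{\hat{x}_1^k, \dots, \hat{x}_s^k\}$ in a natural way, matching $\hat{x}_i^k$ with $\xi_i$. Under this indexing, for each $i$ we have $\|\hat{x}_i^k - \xi_i\|_2 \leq \inf_{x\in X_k} \|x - \xi_i\|_2 \leq \dist(X_k, \xi)$ (using that $\dist(X_k,\xi) = \sup_{\eta\in\xi}\inf_{x\in X_k}\|x-\eta\|_2$), so in particular $\sup_\ell \|\xi_\ell - \hat{x}_\ell^k\|_2 \leq \dist(X_k, \xi)$.

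Next I would apply Proposition \ref{prop:alphaClose} to $\tilde\mu = \widehat{\mu}_k$, $\tilde\alpha = \widehat\alpha$, $\tilde{x}_\ell = \hat{x}_\ell^k$. The first term in the bound is immediate from the estimate above; for the second term, I substitute the bound \eqref{eq:XkOptVal}, namely
\begin{equation*}
J(\widehat{\mu}_k) - J(\mu^\star) \leq \left(\norm{\alpha^\star}_1 \frac{\kappa_{\hess} \norm{q^\star}_2}{2} + \frac{L}{2}\|\alpha^\star\|_1^2 \kappa_\nabla^2\right)\dist(X_k,\xi)^2,
\end{equation*}
valid since $\dist(X_k,\xi) \leq \tau$. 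Factoring out $\dist(X_k,\xi)$ then yields the displayed inequality of the corollary.

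Finally, I would justify the convergence claim. By Theorem \ref{th:MainExchange}, $\dist(X_k, \xi) \to 0$ as $k \to \infty$. The sequence $\|\widehat{\mu}_k\|_\calM$ is bounded because $f$ is bounded below (Assumption \ref{ass:f}), which gives $\|\widehat{\mu}_k\|_\calM \leq J(\widehat{\mu}_k) - \inf f$, and $J(\widehat{\mu}_k) \to J(\mu^\star)$ by \eqref{eq:XkOptVal}. Consequently the right-hand side of the displayed bound tends to zero, so $\widehat\alpha \to \alpha^\star$; combined with $\hat{x}_i^k \to \xi_i$, the pair $(\widehat\alpha, X_k)$ eventually enters the neighborhood of $(\alpha^\star,\xi)$ where $G$ is strongly convex and has a Lipschitz gradient provided by Proposition \ref{prop:G}, which is precisely the basin of attraction.

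I do not expect a genuine obstacle; the only subtlety is making the labeling in step one rigorous so as to reduce the asymmetric quantity $\sup_\ell\|\xi_\ell - \hat{x}_\ell^k\|_2$ to $\dist(X_k,\xi)$. This requires the exact cardinality statement of Proposition \ref{prop:Good_Localization} — without it, one only controls $\dist(\xi, X_k)$ and not the per-point distance appearing in Proposition \ref{prop:alphaClose}.
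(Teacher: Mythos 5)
Your proposal follows the same route as the paper: index $X_k$ by matching each $\hat{x}_i^k$ with the unique element of $X_k$ in the ball $B_i$ (Proposition~\ref{prop:Good_Localization}), apply Proposition~\ref{prop:alphaClose}, and plug in the optimality-gap bound~\eqref{eq:XkOptVal} before invoking Theorem~\ref{th:MainExchange} to conclude convergence into the basin of Proposition~\ref{prop:G}. One slip to flag: you write
\[
\|\hat{x}_i^k - \xi_i\|_2 \le \inf_{x\in X_k}\|x-\xi_i\|_2 \le \dist(X_k,\xi),
\]
but the first inequality is reversed as stated --- since $\hat{x}_i^k \in X_k$, trivially $\inf_{x\in X_k}\|x-\xi_i\|_2 \le \|\hat{x}_i^k - \xi_i\|_2$. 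What you actually need is \emph{equality}, i.e.\ that the point of $X_k$ lying in $B_i$ is also the point of $X_k$ closest to $\xi_i$. This holds because the $\tau_0$-balls $B_i$ are necessarily pairwise disjoint (strong concavity of $|A^*q^\star|$ on each ball would otherwise produce two maxima on a single segment), so any $\hat{x}_j^k\in B_j$ with $j\ne i$ satisfies $\|\hat{x}_j^k - \xi_i\|_2 > \tau_0 \ge \|\hat{x}_i^k - \xi_i\|_2$. This separation argument is the content implicit in the paper's phrase "recalling that $\max_i\norm{\xi_i-\hat{x}_i^k}_2 \le \dist(X_k,\xi)$"; you correctly identified it as the one nontrivial step, but the stated inequality should be an equality justified by disjointness of the balls.
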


\section{Description of the hybrid approach} \label{sec:hybrid}

To conclude this paper, we propose a method alternating between an exchange step and a continuous gradient descent. It is detailed in Algorithm \ref{alg:hybrid}. The idea is, after each iteration of an exchange algorithm, to start a gradient descent of $G$ initialized at the solution $\widehat{\mu}_k$ of \eqref{eq:simplePrimal}. If this gradient descent converges to a measure $\bar{\mu}_k$, we can subsequently test if it is an optimal point by checking if $\bar{q}_k= -\nabla f(A\bar{\mu}_k)$ fulfills the stopping criterion $\norm{A^*\bar{q}_k}_\infty \leq 1+\epsilon$, where $\epsilon$ is a user defined stopping criterion (the latter is justified by Proposition \ref{prop:supvsq_raw}). If so, we may output $\bar{\mu}_k$, and if not, we may instead continue our exchange algorithm, possibly after adding also the support points of $\bar{\mu}_k$. Its behavior is described in the following theorem. 
%It can be seen as a direct corollary of Theorem \ref{thm:generic_convergence}, Theorem \ref{th:MainExchange} and Proposition \ref{prop:G}.

\begin{theo}[Convergence guarantees for the alternating method]
Algorithm \ref{alg:hybrid} comes with the following guarantees:
\begin{enumerate}
 \item \emph{(Theorem \ref{thm:generic_convergence})} Under Assumptions \ref{ass:f}, \ref{ass:A} and \ref{ass:grid}, it is guaranteed to stop after a finite number of iterations for any stopping criterion $\epsilon>0$. 
 \item \emph{(Theorem \ref{th:MainExchange})} If in addition Assumptions \ref{ass:regf} and \ref{ass:dualCert} are satisfied, then the algorithm eventually converges linearly: $k \geq N+k_\tau$ with $k_\tau \leqsim \log(\tau^{-1})$, we have $\dist(\Omega_{k},\xi)\leq \tau$. 
 \item \emph{(Proposition \ref{prop:G}, Theorem \ref{th:MainExchange} and Proposition \ref{prop:alphaClose})} If in addition Assumptions \ref{ass:strongConv} and \ref{ass:TransMatrix} are satisfied, then - for large enough $k$ - the low complexity gradient descent \eqref{eq:continuous_gradient}  method converges linearly : $\|(\alpha^{(t)},X^{(t)})-(\alpha^\star,\xi)\|_2\leq c^t\|(\alpha^{(0)},X^{(0)})-(\alpha^\star,\xi)\|_2$ for some $0\leq c<1$. 
\end{enumerate}
\end{theo}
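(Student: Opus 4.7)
The plan is to assemble the three claims from results already established in the paper, verifying in each case that the extra operations introduced by Algorithm~\ref{alg:hybrid} (running a gradient descent on $G$, and possibly augmenting $\Omega_{k+1}$ with $\supp(\bar\mu_k)$) do not invalidate the hypotheses of those results.

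For claim (1), I would first note that the sequence of grids $(\Omega_k)$ produced by the hybrid algorithm still satisfies $\Omega_{k+1} \supseteq \Omega_k \cup X_k$, possibly with additional points coming from $\supp(\bar\mu_k)$. The proof of Theorem~\ref{thm:generic_convergence} only relies on monotonicity of the discretizations and on the fact that every local maximizer of $|A^*q_k|$ exceeding $1$ is eventually incorporated, so adding further points is harmless and the conclusion $\|A^*q_k\|_\infty \to 1$ persists. Since the continuous step operates on top of $\Omega_k$, the iterate $\bar\mu_k$ is supported on a grid at least as fine as $\Omega_k$, so the stopping criterion $\|A^*\bar q_k\|_\infty \leq 1+\epsilon$ is triggered after finitely many iterations.

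For claim (2), I would re-inspect the auxiliary lemmata of Section~\ref{sec::exchange::auxiliary} and the regime arguments of Section~\ref{sec::exchange:linear}, all of which only use the inclusion $\Omega_{k+1} \supseteq \Omega_k \cup X_k$ and not equality. Lemmata~\ref{lem:SupVsq}, \ref{lem:gridVsQ} and \ref{lem:regimeChange} thus remain valid verbatim for the hybrid algorithm; the extra support points can only tighten the bounds involving $\dist(\Omega_k,X_k)$ and $\dist(\Omega_k,\xi)$, and hence can only accelerate the regime transitions. Consequently Theorem~\ref{th:MainExchange} applies and yields the eventual linear rate $\dist(\Omega_k,\xi)\leq \tau$ for $k\geq N+k_\tau$ with $k_\tau = O(\log\tau^{-1})$.

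For claim (3), I would combine Proposition~\ref{prop:Good_Localization}, which eventually guarantees $|X_k|=s$ with one point near each $\xi_i$, with Corollary~\ref{cor:alphaClose}, which shows that the coefficients $\widehat\alpha^k$ of the solution of \eqref{eq:simplePrimal} tend to $\alpha^\star$ at the same rate as $\dist(X_k,\xi) \to 0$. For $k$ sufficiently large, $(\widehat\alpha^k, X_k)$ then lies inside the basin of attraction from Proposition~\ref{prop:G}, on which $G$ is strongly convex with Lipschitz continuous gradient. Standard linear convergence of a gradient descent on a strongly convex $\calC^{1,1}$ function under e.g.\ a Wolfe line search then delivers the announced geometric contraction with a rate $c<1$ depending only on the local condition number of $G$ at $(\alpha^\star,\xi)$.

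The only real obstacle is bookkeeping: one must check that ``$q_k$'' in the exchange lemmata can consistently be read as the dual optimum on whichever (augmented) grid is actually used at iteration $k$, and that the suboptimality of $\bar\mu_k$ on that grid does not disturb the monotone decay of $J(\mu_k)$ invoked in Theorem~\ref{thm:generic_convergence}. Both points are immediate, since the exchange step of iteration $k+1$ always re-optimizes on the current grid and thereby restores the primal optimum used in every estimate, regardless of the intervening descent step.
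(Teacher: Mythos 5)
Your proposal is correct and takes essentially the same approach as the paper: the paper itself presents this theorem as a compilation of the cited results (Theorem \ref{thm:generic_convergence}, Theorem \ref{th:MainExchange}, Propositions \ref{prop:G}/\ref{prop:alphaClose}, Corollary \ref{cor:alphaClose}) and gives no separate proof, so your job is exactly the "bookkeeping" you describe. Your observations — that the grid monotonicity $\Omega_{k+1}\supseteq\Omega_k\cup X_k$ is preserved, that the exchange step re-optimizes the dual on the current grid after any continuous step and therefore restores the invariants used in Theorems \ref{thm:generic_convergence} and \ref{th:MainExchange}, and that Proposition \ref{prop:Good_Localization} together with Corollary \ref{cor:alphaClose} places $(\widehat\alpha^k,X_k)$ in the basin from Proposition \ref{prop:G} for large $k$ — are precisely the ingredients the paper implicitly relies on.
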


Overall, this method has many desirable properties: the continuous method should be used whenever the exchange method reaches its basin of attraction since its per iteration cost is much cheaper. However, it is unclear in general that this basin even exists. In that case, the exchange method should be preferred since it eventually converges linearly under quite mild assumptions. The proposed algorithmic scheme somehow captures the best of all methods. Let us notice that it is very similar in spirit to the sliding Frank-Wolfe algorithm proposed in \cite{denoyelle2018sliding}, apart from the fact that we suggest adding \emph{all} the points $X_k$ violating the constraints, while the single most violating point is added in \cite{denoyelle2018sliding}. We believe that the proposed analysis sheds some light on the good numerical performance of this method.

Arguably the most complicated step in this algorithm is to evaluate $X_k$, the set of local maximizers of $A^*q_k$ exceeding $1$. This is an impossible task for an arbitrary function $A^*q_k$. However, a simple heuristic described in the next section provided rather satisfactory results for the measurement functions considered in this paper (trigonometric polynomials and Gaussian convolution).

Apart from this, let us outline that the subproblems in this algorithm are well suited for numerical resolution. In the exchange algorithm, we only solve the dual problems $\calD(\Omega_k)$ which are strongly convex. 
Hence first-order methods for instance come with guarantees of convergence to $q_k$ in $\ell^2$-norm.  
Recovering the masses $\hat \alpha_k$, solutions of $\calP(X_k)$ is also stable since $X_k$ (the local maximizers of $A^*q_k$) is typically a well separated set of low cardinality. The gradient descent (or alternative nonlinear programming approach) on $G(\alpha,X)$ is performed over a low dimensional set. If the convergence is not satisfactory (e.g. the norm of $\nabla G$ doesn't decay fast enough), it can be stopped, and we can switch back to the exchange algorithm. 

\begin{algorithm}[htbp]
\caption{Alternating method \label{alg:hybrid}}
\begin{algorithmic}[1]
\State \textbf{Input:} Operator $A$, data fitting term $f$, stopping criterion $\epsilon>0$.
\State Set $q_0=0$, $k=0$, $\Omega_0=\emptyset$
\State Evaluate $X_0$ in \ref{eq:defXk} and $\|A^* q_0\|_\infty$ \algorithmiccomment{Nonconvex - Possibly complicated}
\While{$\|A^* q_k\|_\infty > 1+\epsilon$}
    \State $k=k+1$
    \State Set $\Omega_{k} = \Omega_{k-1}\cup X_k$
    \State Solve $\calD(\Omega_{k})$ to retrieve $q_k$ \algorithmiccomment{Convex - Stable}
    \State Evaluate $X_k$ in \ref{eq:defXk} and $\|A^*q_k\|_\infty$ \algorithmiccomment{Nonconvex - Possibly complicated}
    \State Solve $\calP(X_k)$ to retrieve $\widehat{\alpha}_k$ \algorithmiccomment{Convex - Low dimensional}
    \State Gradient descent on $G(\alpha,X)$ in \eqref{eq:defG} starting from $(\widehat{\alpha}_k,X_k)$ \algorithmiccomment{Nonconvex - Low dimensional}
 %   \If{Not converged}
 %     \State Continue
 %   \ElsIf{Converged to $(\bar \alpha_k, \bar X_k)$}
 \If{ Gradient descent converged to $(\bar \alpha_k, \bar X_k)$}
      \State Define $q_k=-\nabla f(A\bar \mu_k)$ with $\bar \mu_k=\sum_{i=1}^{|X_k|} \bar \alpha_k(i) \delta_{\bar X_k(i)}$
      \State Evaluate $X_k$ in \ref{eq:defXk} and $\|A^*q_k\|_\infty$ \algorithmiccomment{Nonconvex - Possibly complicated}
      \State (Optional) Define $\Omega_{k}=\Omega_k\cup{\bar X_k}$.
    \EndIf
\EndWhile
\State Solve $\calP(X_k)$ to retrieve $\alpha_k$  \algorithmiccomment{Convex - Low dimensional}
\State \textbf{Output:} $\mu_k= \sum_{i=1}^{|X_k|} \alpha_k(i) \delta_{X_k(i)}$ and $q_k=-\nabla f(A \mu_k)$.
\end{algorithmic}
\end{algorithm}

\section{Numerical Experiments} \label{sec:numerics}

To test our theory, we have implemented our algorithm in MATLAB. Before displaying the results of the experiments, let us discuss a few key steps in the implementation. In the entire section, we assume that $\Omega = [0,1]^d$ for $d=1$ or $2$ for simplicity. Note that this is no true restriction: we can always by scaling and translation ensure that $\Omega \sse [0,1]^d$, and trivially extend the measurement functions by $0$ to the entirety of $[0,1]^d$.

% \paragraph{Keeping Track of $\Omega_k$}
% Assuming that $\Omega$ is bounded, we can by linear transformations always ensure that $\Omega$ is included in the unit cube $[0,1]^d$. We start with $\Omega_0$ as a uniform, rectangular grid of $[0,1]^d$  with cells of length $2^{-L}$ for some integer $L$. We then choose to refine a cell $\omega$ by subdividing it into $2^d$ new rectangles. This ensures that assumption \ref{ass:refinement} is fulfilled. More importantly, it conveniently allows us to use a $2^d$-ic (i.e. dyadic for $d=1$, quadric for $d=2$, and so on) tree structure to keep track of the active points. The nodes of the tree corresponds to different possible active subsquares in the grid. If the node is a leaf, the subsquare is actually active, and if not, its children correspond to the subsquares its corresponding cells has been divided into (see Figure \ref{fig:graphVsGrid} ). Given such a tree, it is easy to generate the points in $X_k$ (we simply loop over all leaves and collect their corners), which can then be used to generate the matrix $A_k$ for solving the finite-dimensional problems in the exchange algorithm. The refinement process is also particularly simple using this data structure: refining a cell simply means that the adding $2^d$ children to the corresponding leaf in the tree.

\paragraph{Evaluating $X_k$}
Each iteration of the exchange algorithm requires the exact calculation of the local maximizers of $A^*q_k$ exceeding $1$. This is, in general, an impossible task. We resort to the following heuristic method: Given a $q_k$, we first evaluate $|A^*q_k|$ on a fixed rectangular grid $G=((n)^{-1}[0, \dots, n])^d$, and determine all of the discrete peaks, i.e. points in which $\set{A^*q_k}$ is larger than all of its neighbors in the grid, and where $A^*q_k$ exceeds $1-\epsilon_1$ for a threshold $\epsilon_1>0$. Next, we start a gradient descent in each of these points, stopping them once $\norm{(A^*q_k)'}_2$ is lower than another threshold. Since it is possible that several of these gradient descents land in the same point $x$, we subsequently check if the set contains sets of points which are too close to each other - if this is the case, we discard all but one of them in such a group. We finally remove any point in which $\abs{A^*q_k}$ is not larger than $1-\epsilon_2$, for a small $\epsilon_2>0$.

 \paragraph{Solving the Discrete Problems} We have chosen to solve the problems \eqref{eq:discreteDual} and $(\calP(X_k))$ using an accelerated proximal gradient descent \cite{nesterov2013gradient}.
 %, a package for solving convex programs \cite{cvx1,cvx2}. The internal solver in CVX is an interior point method. Other first-order approaches could be used with a lower precision. 
 %The exchange algorithm is ran until $\dist(\Omega_k, X_k) \leq \epsilon_3$, as explained in  paragraph \ref{sec:stopping_criterion}.

 %Note that this inevitably leads to non-sparse solution vectors (entries in the vector that should be equal to zero are non-zero, but small). In practice, this problem is solved 
 
 %As was discussed at the end of section \ref{sec:GradientDescent}, this problem can however be dealt with by simply thresholding the vector $\mu_k$ before carrying out the merging procedure outlined above.

\subsection{Example 1: Super-resolution from Fourier measurements in 1D.}
We start by testing our algorithm on a popular instance of problem \eqref{eq:primal}: super-resolution of a measure $\mu \in \calM(0,1)$ from finitely many of its Fourier moments 
\begin{align*}
   y_k= \sprod{a_k, \mu} = \int_{0}^1 \exp(-ikx)d\mu,  -m/2\leq k \leq m/2-1.
\end{align*}
We use  a quadratic data fidelity term $f(z) = \frac{L}{2} \norm{z-y}_2^2$. This example is well studied by the signal processing community \cite{tang2013compressed,candes2014towards,duval2015exact,poon2018support}.
%, and it is well established that the regularity Assumption \ref{ass:dualCert} is satisfied when $y = A\mu_0$ for a sparse atomic measure, and $L$ is large.

We chose $m$ to be equal to $30$, and a vector $y$ generated as $A\mu_0$, where $\mu_0$ is chosen at random as a $5$-sparse atomic measure with amplitudes close to $1$ or $-1$. The positions of the Dirac masses were chosen as a small random perturbation from a uniform grid. The initial grid $\Omega_0$ was chosen as a uniform grid with $8$ points, i.e. $[0, \frac{1}{8}, \dots, \frac{7}{8}]$. We made $100$ experiments, with $20$ iterations of the exchange algorithm. The evolution of $\mu_k$ and $q_k$ for the first iterations for a typical iteration is displayed in Figure \ref{fig:Fourier_evol1}. We see that after already $8$ iterations, $A^*q_k$ appears to be very close to $A^*q^\star$. Before this iteration, the algorithm 'chooses' to add points relatively uniformly to the grid, but after that, new points are only added close to $\xi$. This is further emphasized by Figure \ref{fig:Fourier_evol2}, in which $X_k$ is plotted for each iteration, along with size of $\Omega_k$.

\begin{figure}
 \centering
 \includegraphics[width=1\textwidth]{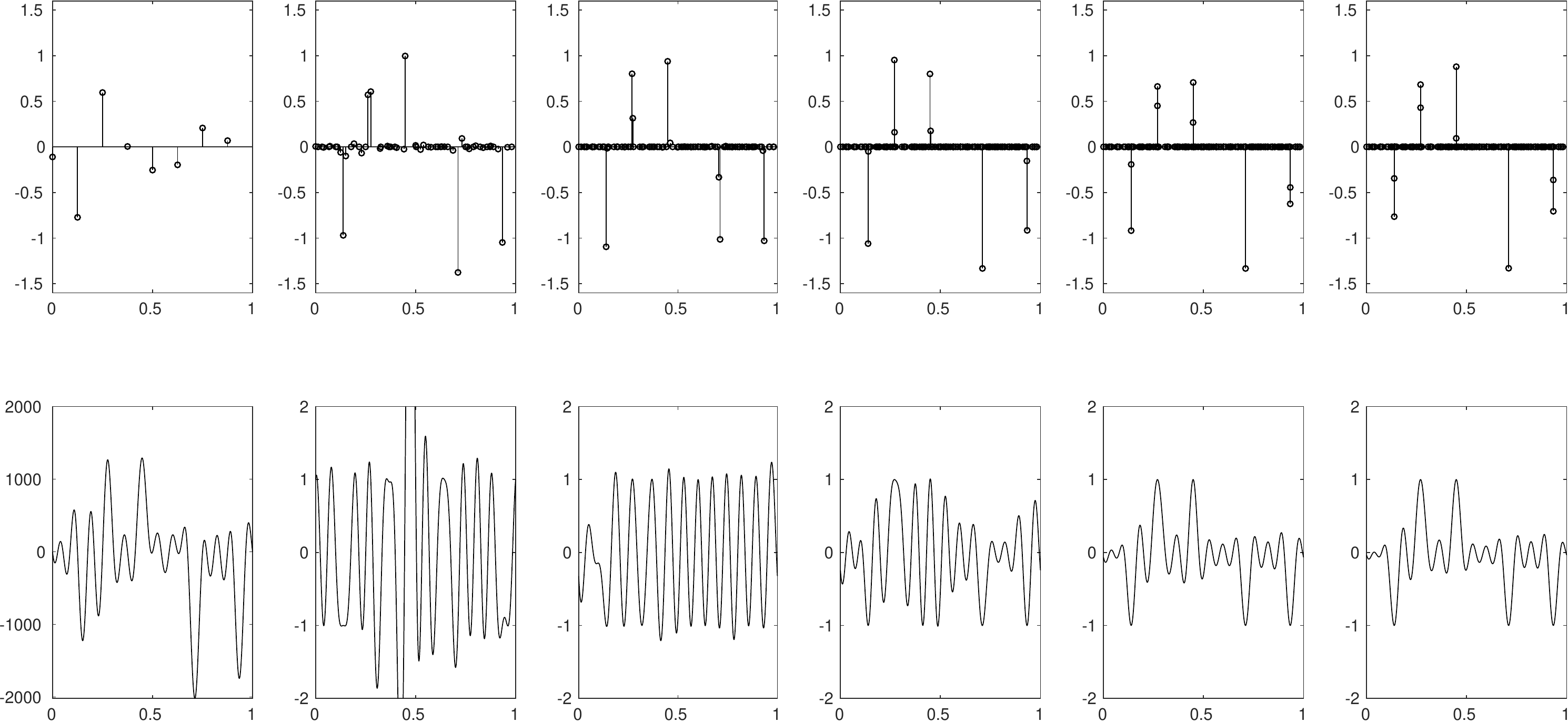}
\caption{Above: $\mu_k$ for $k=0,2,4,6,8,20$ along one run of the algorithm. Below: $A^*q_k$ for $k=0,2,4,6,8,20$ along the same run. Note that the range of the first plot is different from the others. \label{fig:Fourier_evol1} }
\end{figure}

\begin{figure}
 \centering
 \includegraphics[width=1\textwidth]{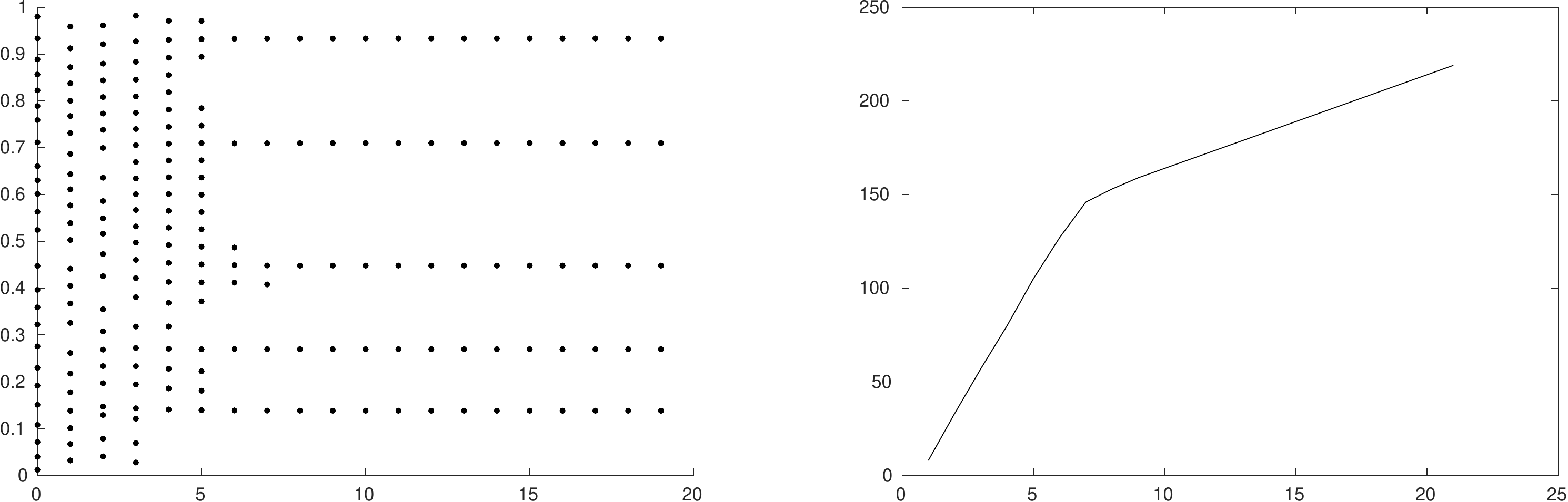}
 \caption{Left: The set $X_k$ of added points for each iteration along a run of the algorithm. Right: The total number of points in $\Omega_k$ along the same run.\label{fig:Fourier_evol2}}
\end{figure}

To track the success of the algorithm a bit more systematically, we chose to track the evolution of $\dist(\xi,X_k)$, $\dist(\Omega_k,X_k)$ and $\dist(\Omega_k,\xi)$. The median over the 100 iterations, along with confidence intervals covering all experiments but the top and bottom $5\%$ are plotted in Figures \ref{fig:distances}. We see that all of the quality measures seem to converge linearly to 0.

\begin{figure}
 \centering
 \includegraphics[width=1\textwidth]{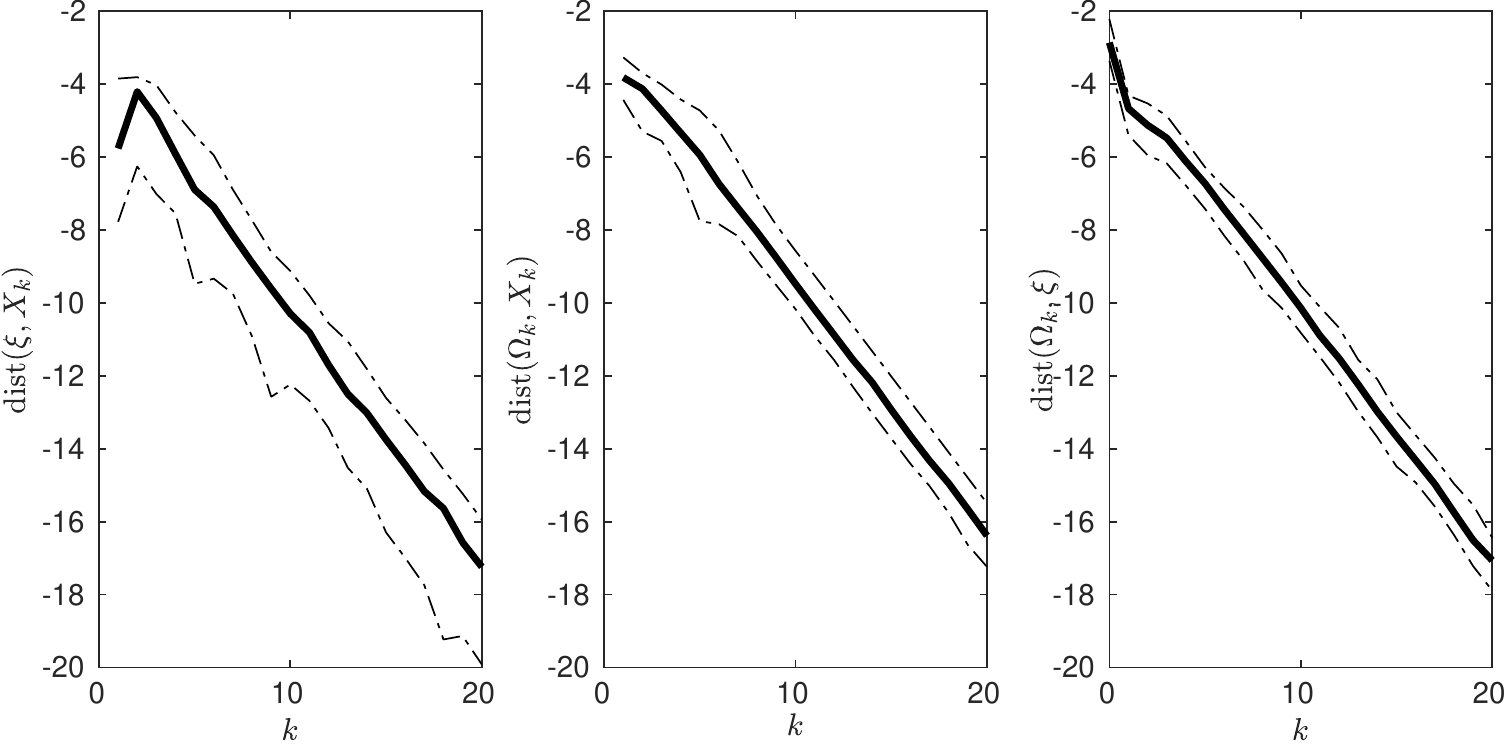}
 \caption{Logarithmic plot of $\dist(\xi,X_k)$, $\dist(\Omega_k,X_k)$ and $\dist(\Omega_k,\xi)$. Shown is the median value (oblique line) along with confidence intervals(dashed) covering all but the top and lower $5\%$. \label{fig:distances}}
\end{figure}

Finally, we performed the same analysis for the optimum gap $\min \eqref{eq:discretePrimal}-\min\eqref{eq:primal}$, the error $\norm{q_k-q^\star}_2$ and the sizes of the grids $\Omega_k$. ($\min\eqref{eq:primal}$ was in each case chosen as the lowest value of $\min \eqref{eq:discretePrimal}$ over all iterations $k$, and $q^\star$ as the corresponding dual solution). We see that the optimum gap seems to converge exponentially to $0$ right from the first iteration, wheras the error $\norm{q_k-q^\star}_2$ initially does not. The 'two-phase'-effect is also easy to spot: After about $5-6$ iterations, the algorithm switches from adding many points to adding only few points close to $\xi$. Interestingly, the plateau of the $q$-errors seems to be simultaneuos with the 'phase-transition'. 

\begin{figure}
\centering\includegraphics[width=1\textwidth]{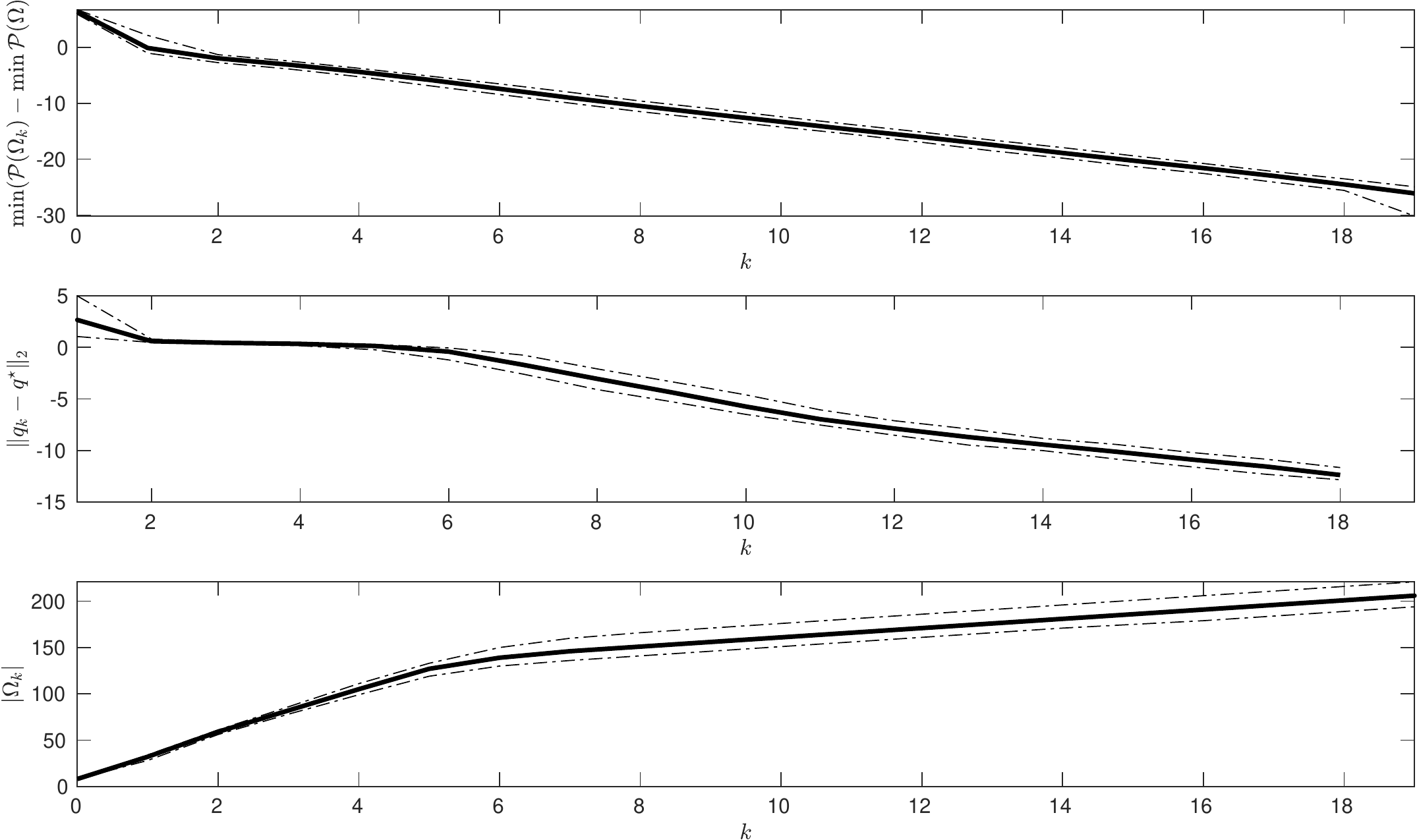}
 \caption{Plot of the evolution optimum gap, $q$-error and grid sizes. The top two plots are logarithmic, while the bottom one is not. The oblique lines are represent the median iterations, the dashed ones are confidence intervals covering all but the top and bottom $5\%$ values. \label{fig:Pqn}}
\end{figure}

\subsection{Example 2: Super-resolution from Gaussian measurements in 2D}
Next, we perform a study in a two-dimensional setting. 
We consider $\Omega=[-1,1]^2$ and measurement functions of the form
\begin{align*}
    a_i(x) = \exp\left( - \frac{\norm{x-x_i}^2}{2\sigma^2} \right),
\end{align*}
where the points $x_i$ live on a Euclidean grid of size $64\times 64$, restricted to the domain $[-0.5,0.5]^2$. 
We then add white Gaussian noise to the measurements, leading to pictures of the type shown in Fig. \ref{fig:super_resolution_2D}. Here, the true underlying measure contains $11$ Dirac masses with random positive amplitudes and random locations on $[-0.4,0.4]^2$. 

\begin{figure}
 \centering\includegraphics[width=.4\textwidth]{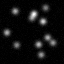}
 \caption{Measurements $y$ associated to a super-resolution experiment. A sparse measure is convolved with a Gaussian kernel and Gaussian white noise is added. \label{fig:super_resolution_2D}}
\end{figure}

\subsubsection{Exchange algorithm}

The evolution of the grids $\Omega_k$ and of the dual certificates $|A^*q_k|$ is shown in Figure \ref{fig:Dual_Certificate}. As can be seen, points are initially added anywhere in the domain, but after a few iterations, they all cluster around the true locations, as expected from the theory. To further stress this phenomenon and illustrate our theorems and lemmata, we display many quantities of interest appearing in our main results in Fig. \ref{fig:SeveralQuantities}.  
the distance from $X_k$ to $\xi$ (where $\xi$ is estimated as $X_{40}$) on Fig. \ref{sub:distX}, the distance from $\Omega_k$ to $\xi$ on Fig. \ref{sub:distO}, the evolution of $J(\widehat \mu_k)-J(\widehat \mu_{40})$ on Fig. \ref{sub:distJ}, $\|A^*q_k\|_\infty-1$ on Fig. \ref{sub:viol}. Finally, the number of maxima of $|A^*q_k|$ is shown on Fig. \ref{sub:max}.
As can be seen, the number of maxima quickly stabilizes, suggesting that we reached a $\tau_0$-regime. Then all the quantities (cost function, distance from $\xi$, violation of the constraints) seem to converge to $0$ linearly. This is not true after iteration 15, and we suspect that this is solely due to numerical inaccuracies when computing the solution of the discretized problems. Notice however that the accuracy of the Dirac locations drops below $10^{-3}$ after 14 iterations, and that this accuracy is more than enough for the particular super-resolution application. Notice that if we wished to reach this accuracy with a fixed grid, we would need a Euclidean discretization containing $10^6$ points, while we here needed only 152 ($|\Omega_{14}|=152$). In addition, the $\ell^1$ resolution is stable since it is accomplished on a grid $X_{14}$ containing only $11$ points.

\begin{figure}[h]
  \begin{center}
    \subfloat[$|A^*q_1|$]{
      \includegraphics[width=0.32\textwidth]{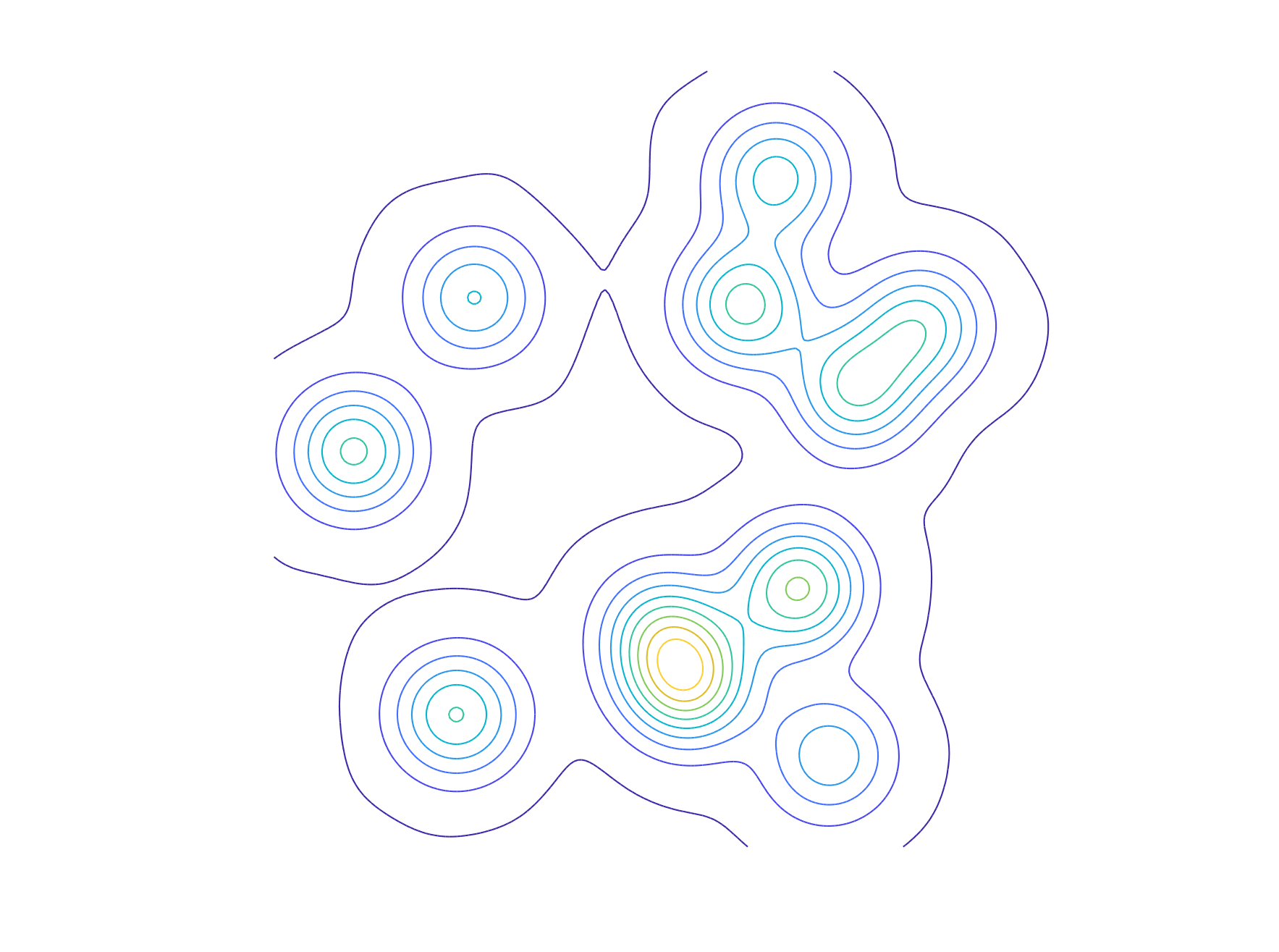}}
    \subfloat[$|A^*q_2|$]{
      \includegraphics[width=0.32\textwidth]{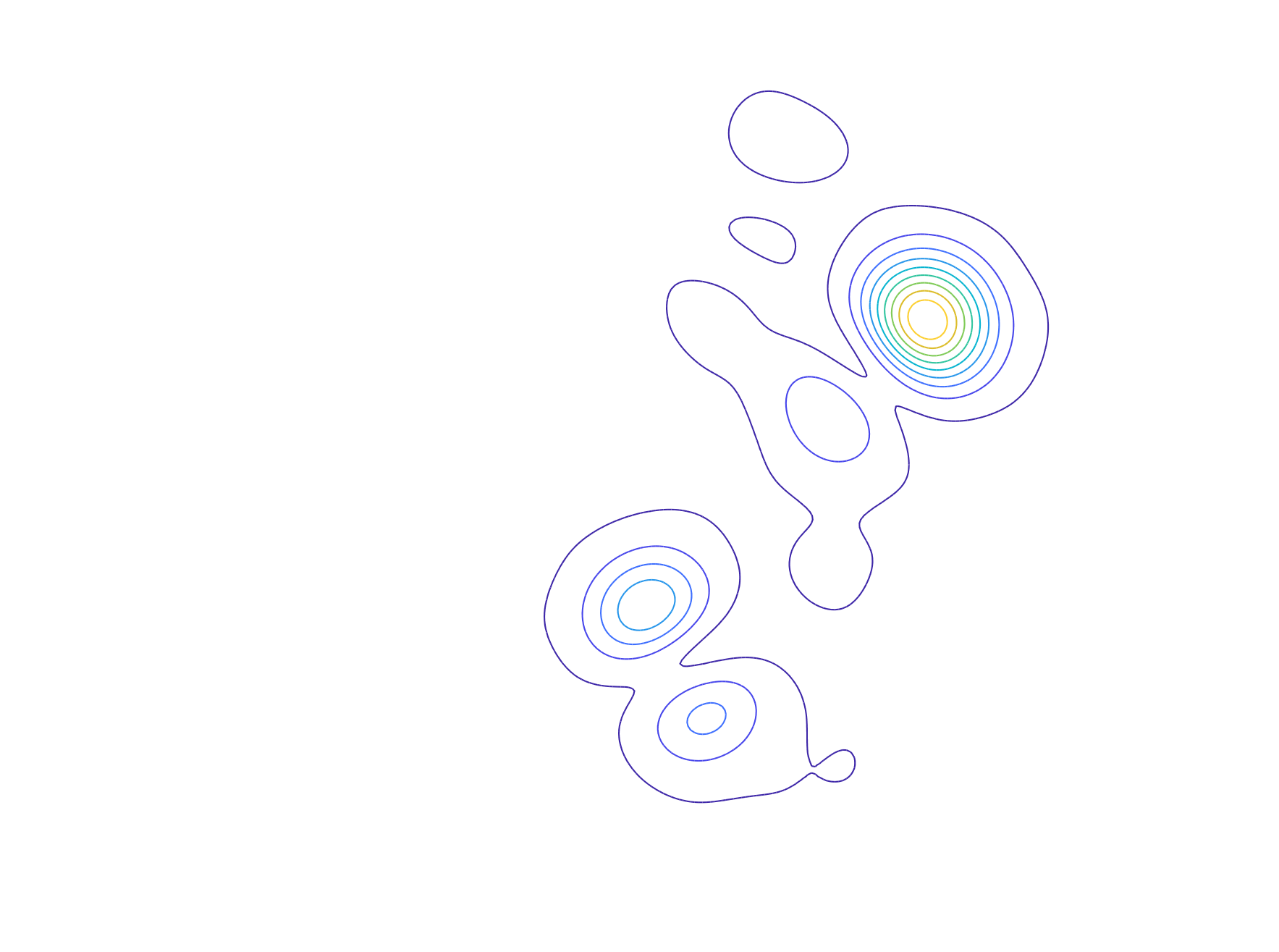}}
    \subfloat[$|A^*q_4|$]{
      \includegraphics[width=0.32\textwidth]{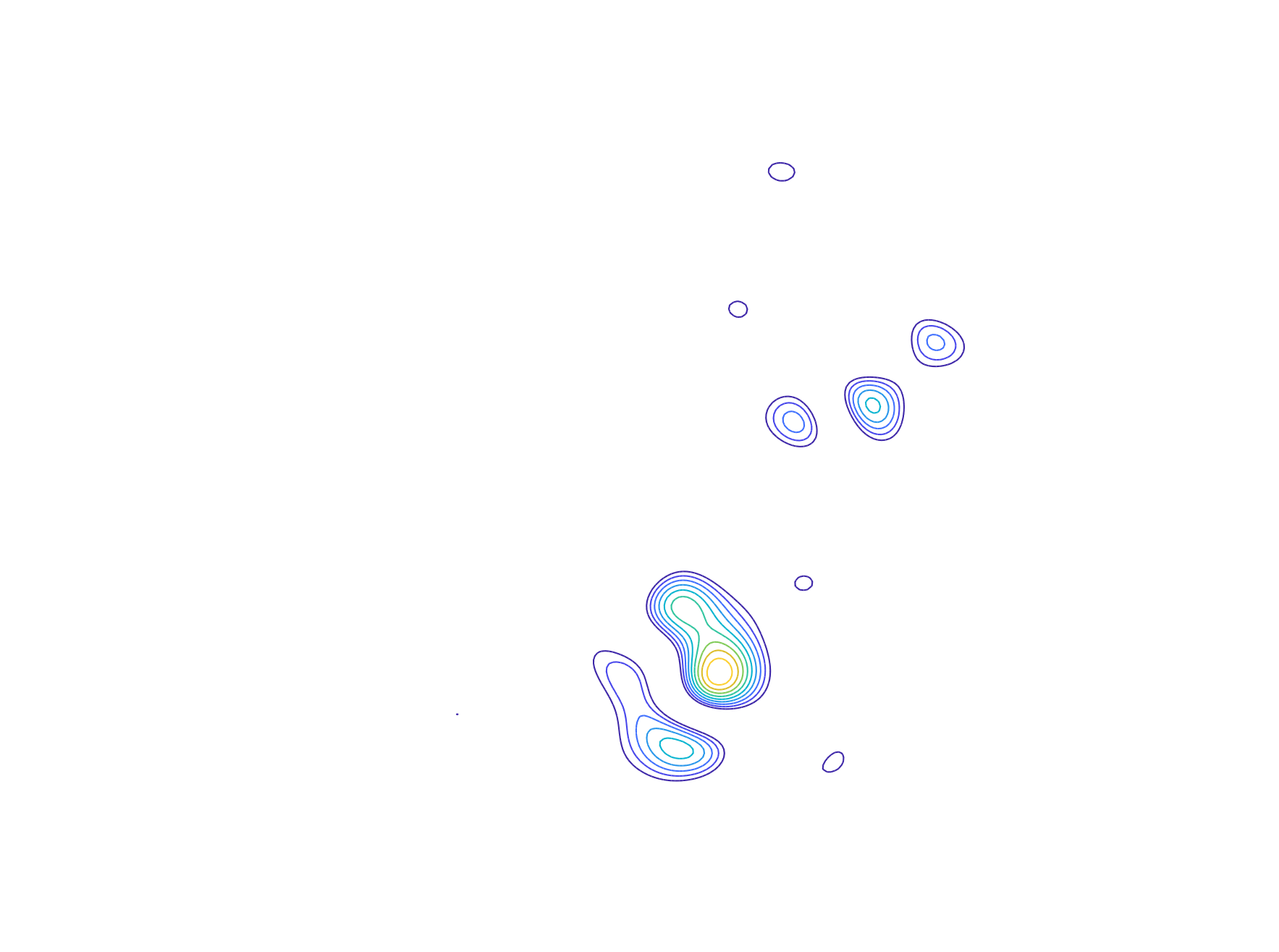}}

    \subfloat[Grid 1]{
      \includegraphics[width=0.32\textwidth]{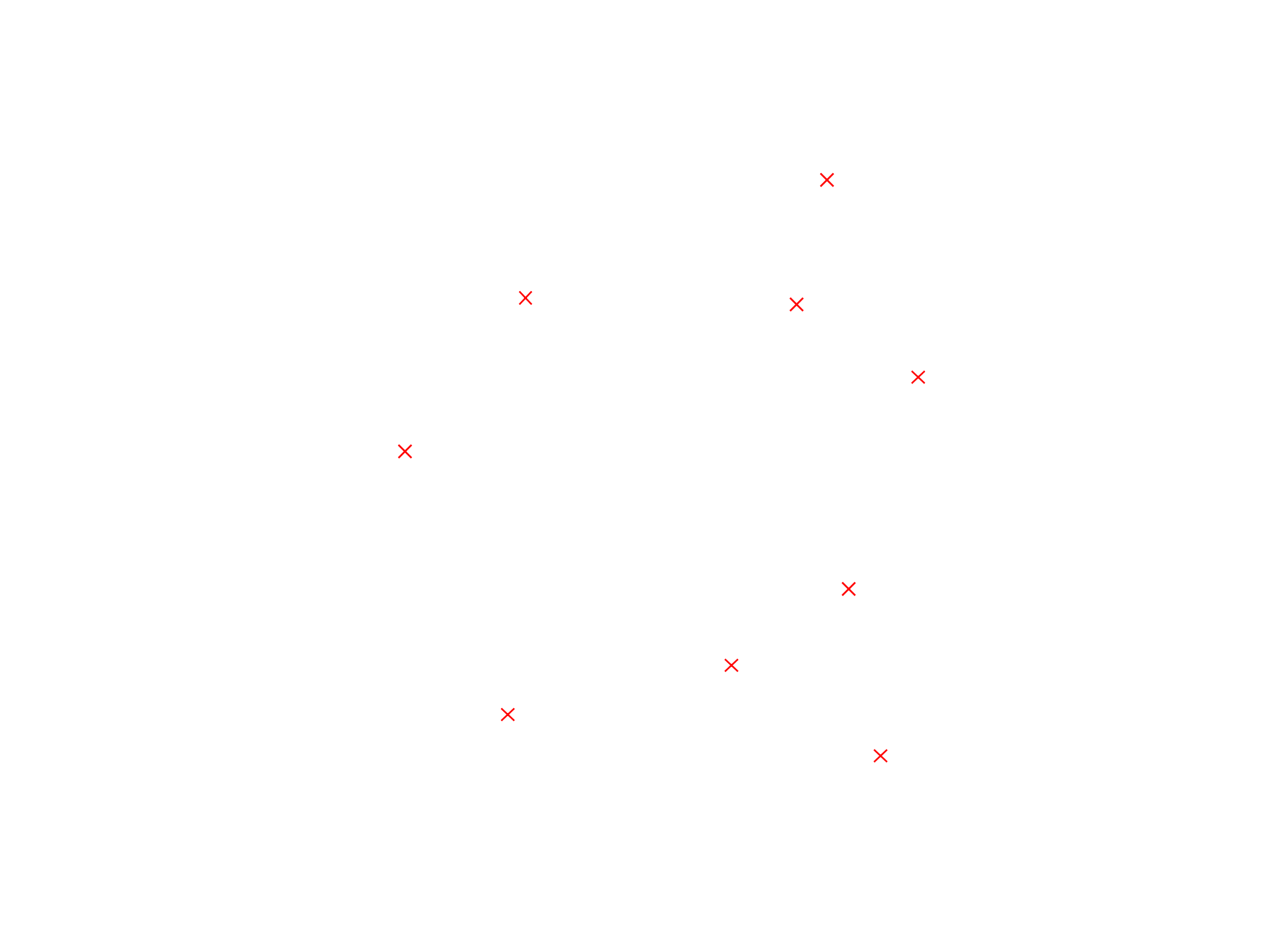}}
    \subfloat[Grid 2]{
      \includegraphics[width=0.32\textwidth]{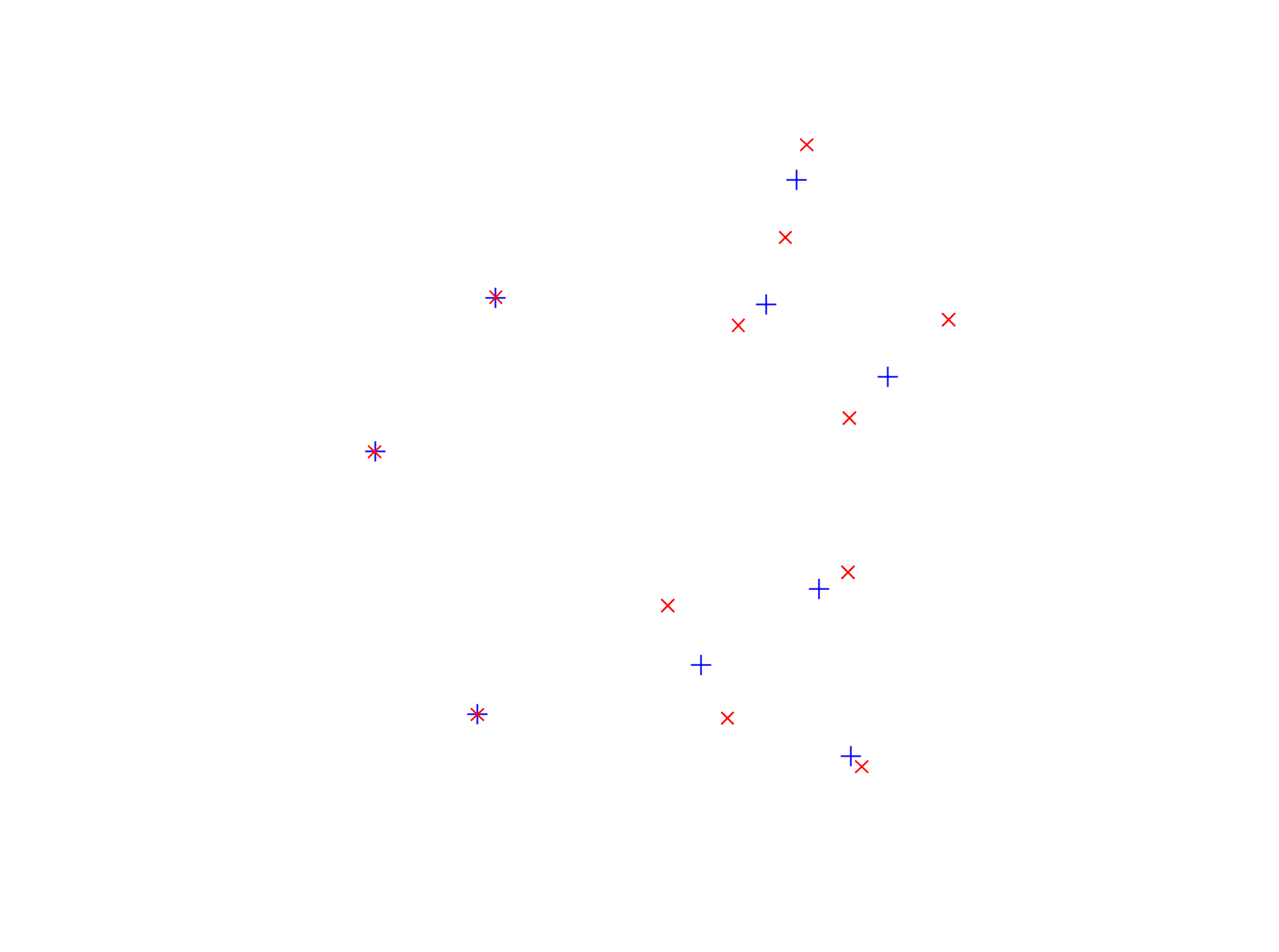}}
    \subfloat[Grid 4]{
       \includegraphics[width=0.32\textwidth]{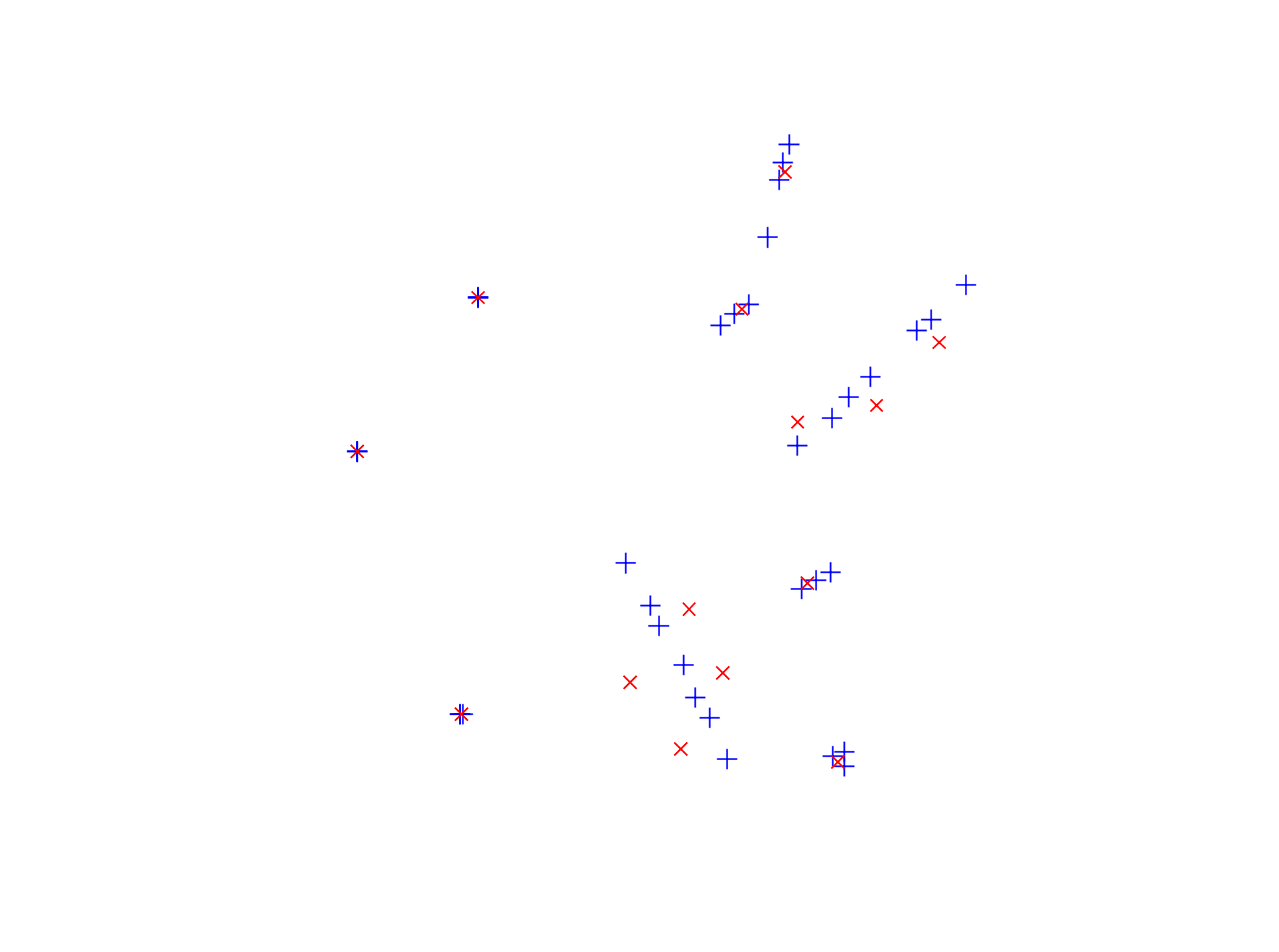}}
      
      \subfloat[$|A^*q_8|$]{
      \includegraphics[width=0.32\textwidth]{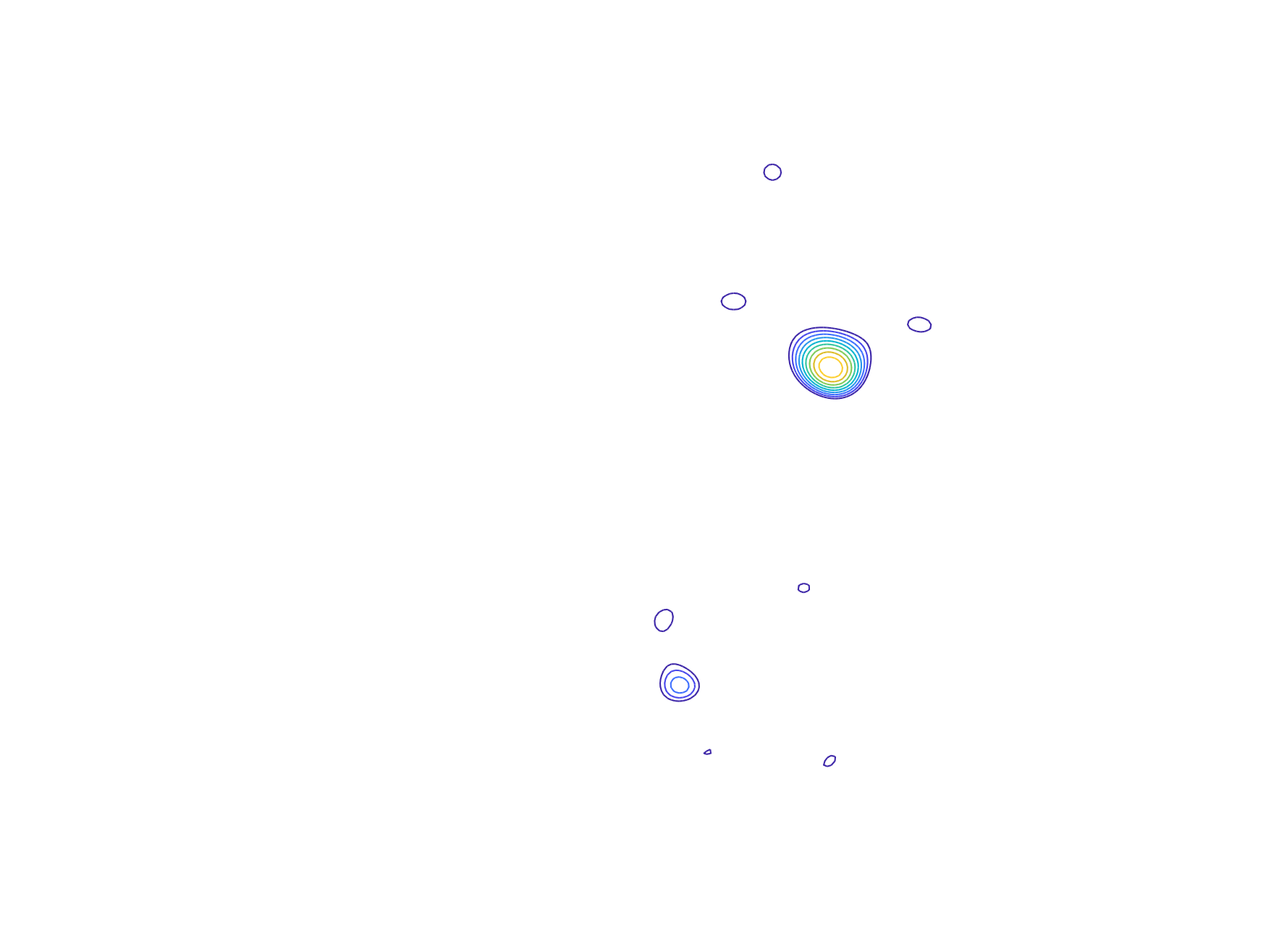}}
    \subfloat[$|A^*q_{10}|$]{
      \includegraphics[width=0.32\textwidth]{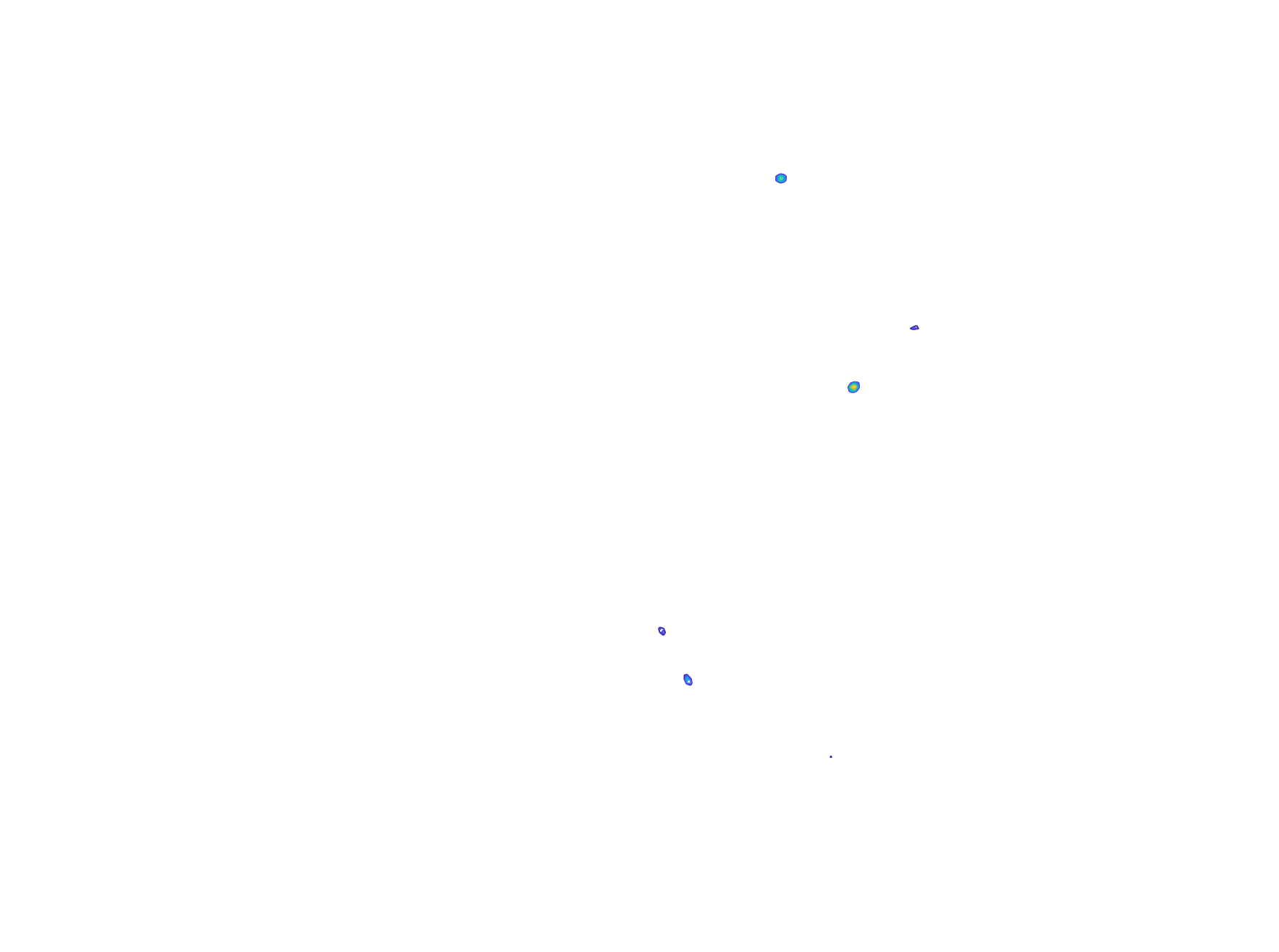}}
    \subfloat[$|A^*q_{12}|$]{
      \includegraphics[width=0.32\textwidth]{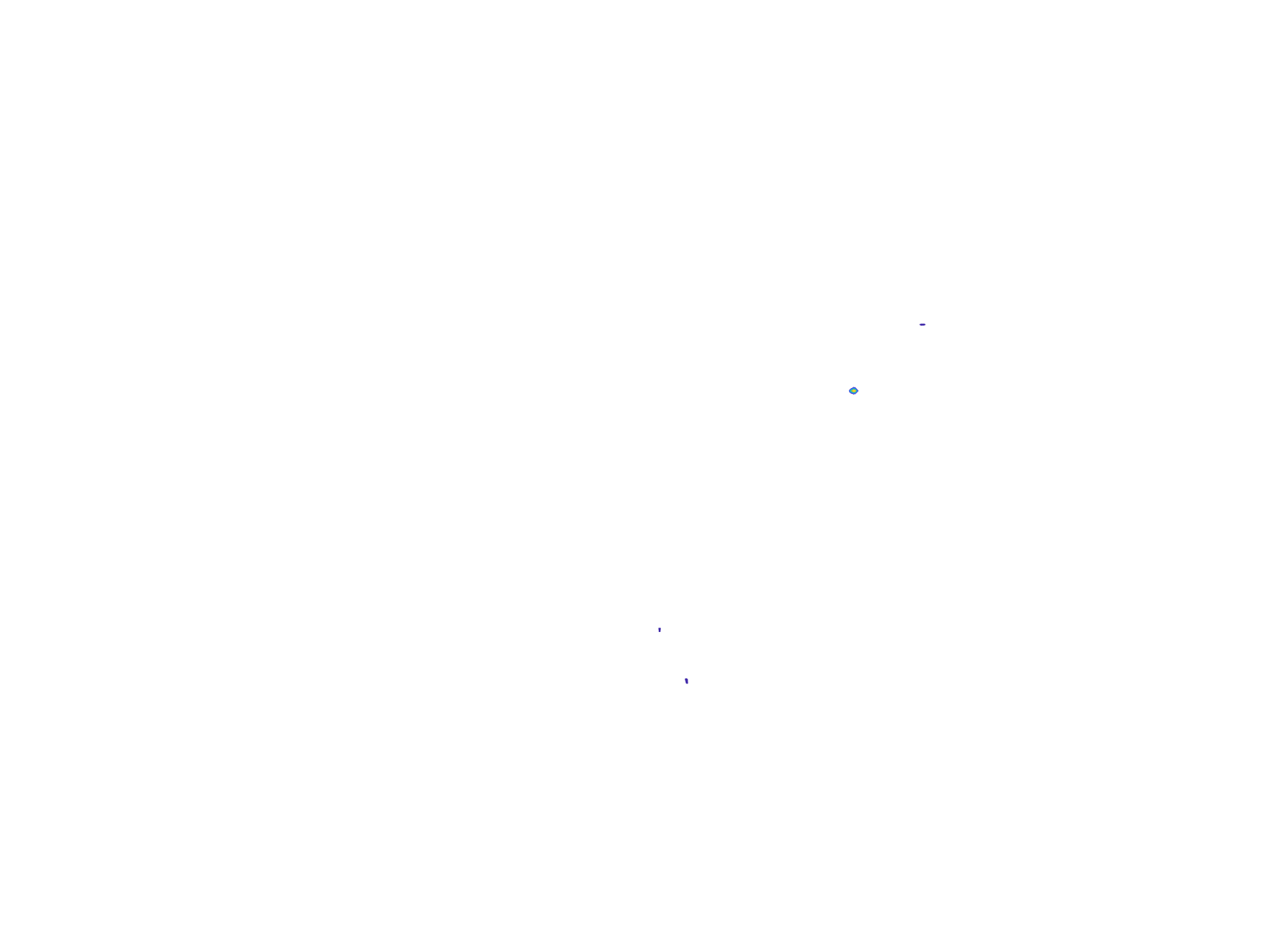}}

    \subfloat[Grid 8]{
      \includegraphics[width=0.32\textwidth]{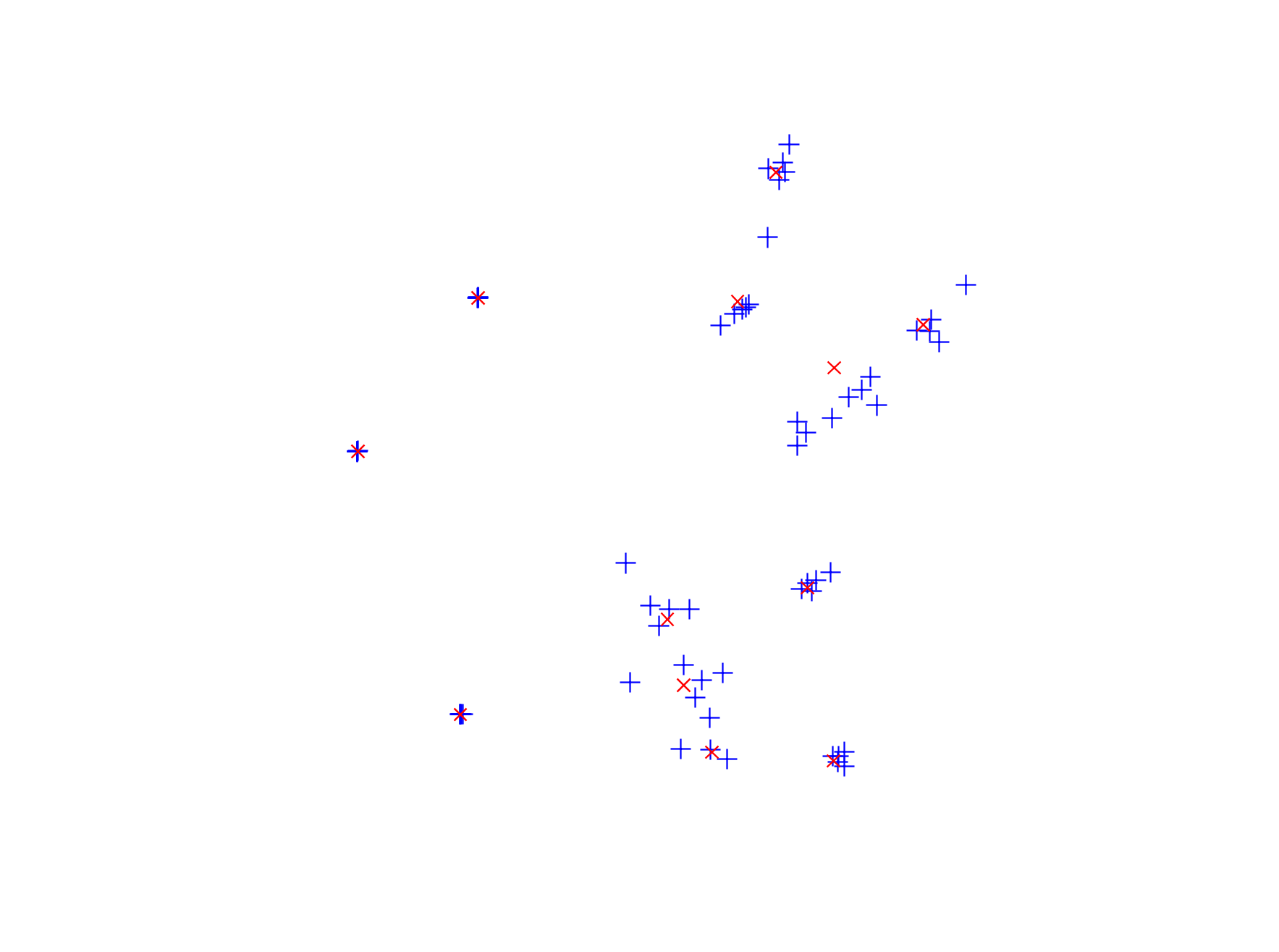}}
    \subfloat[Grid 10]{
      \includegraphics[width=0.32\textwidth]{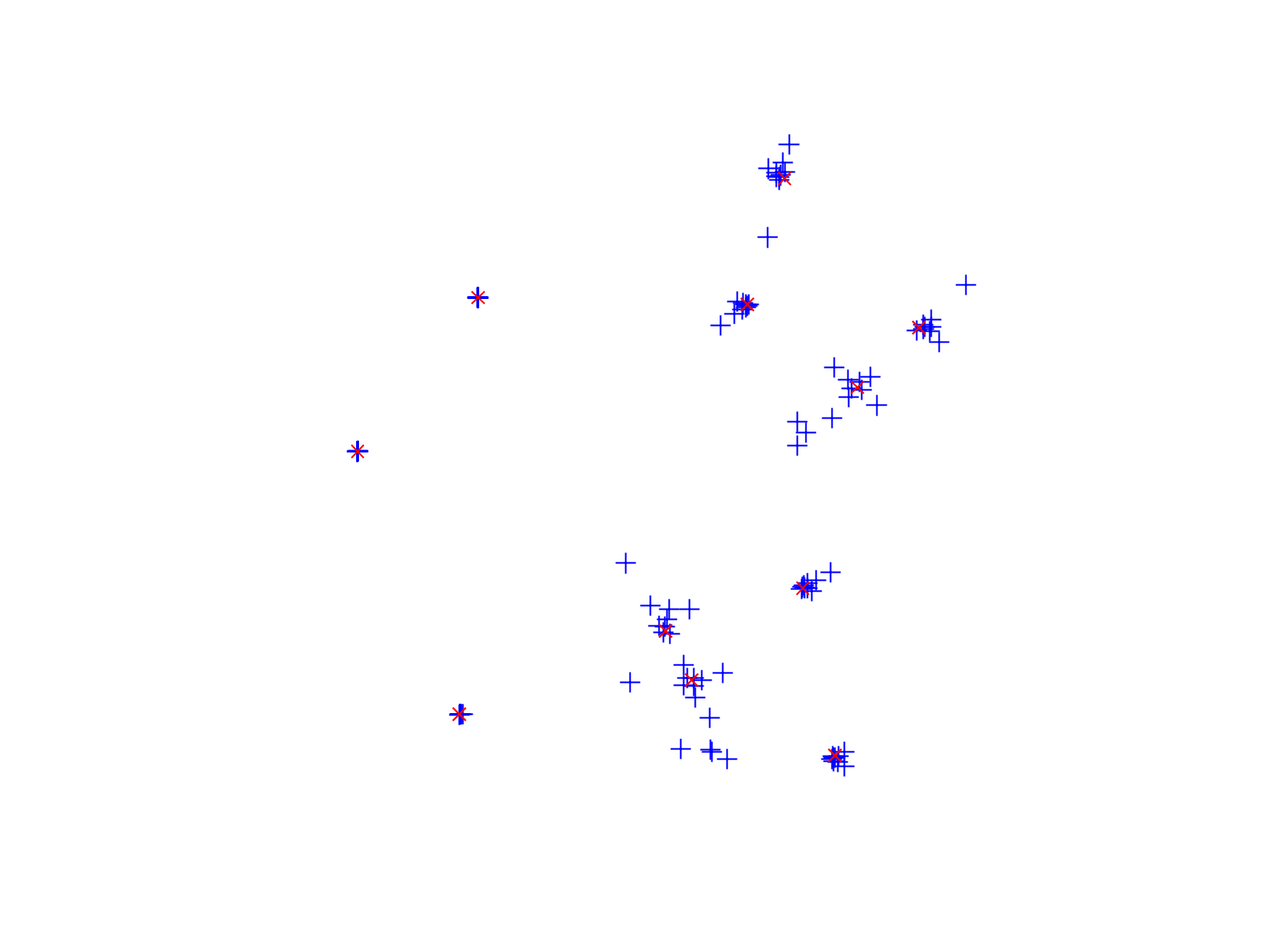}
      \label{sub:ref6}}
    \subfloat[Grid 12]{
       \includegraphics[width=0.32\textwidth]{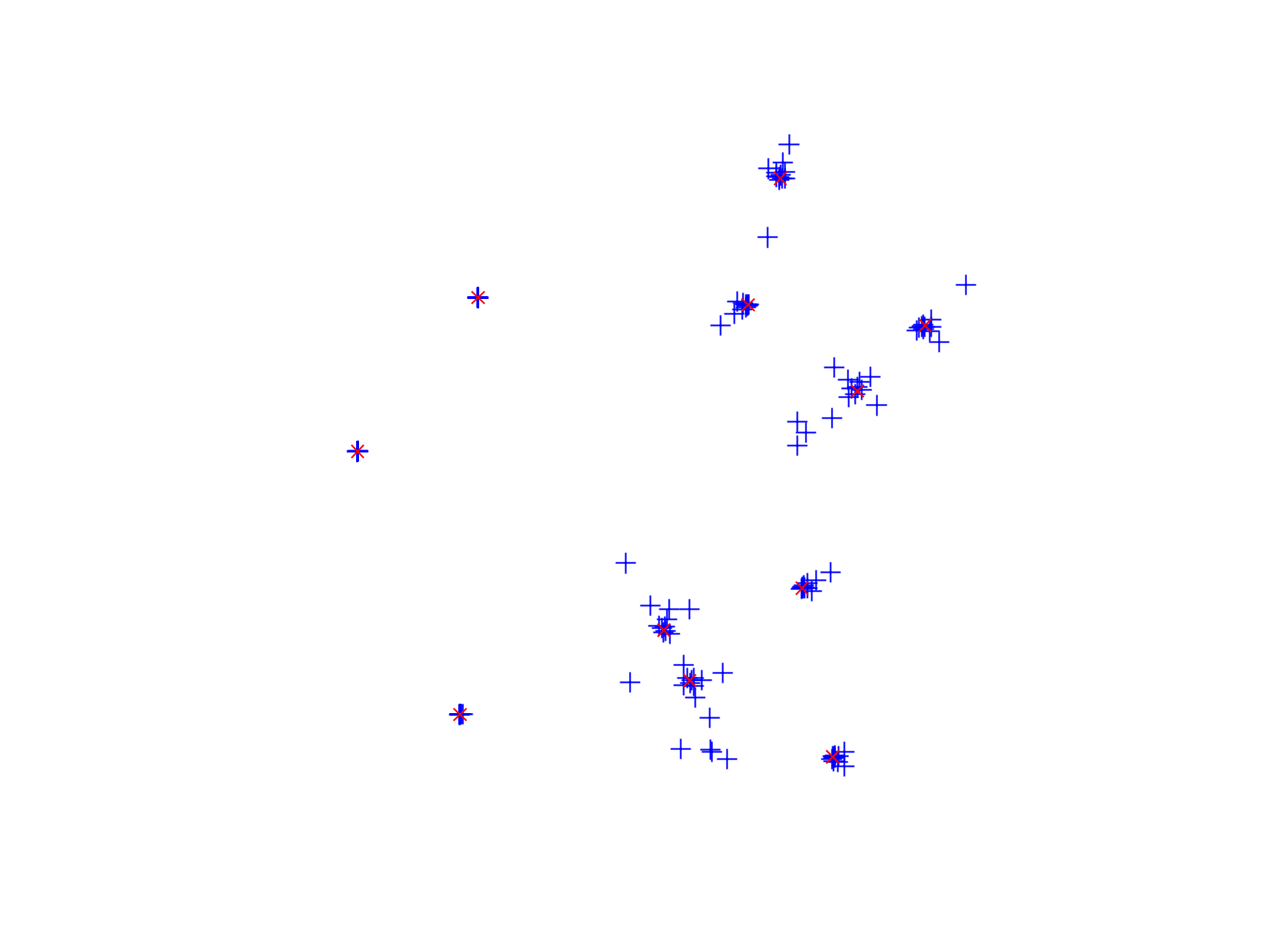}}
      
      \caption{Evolution of the dual certificate and of the grid through the 12 first iterations. This is a contour plot with the levels from 1 to the maximum of $|A^*q_i|$ indicated.}
    \label{fig:Dual_Certificate}
  \end{center}
\end{figure}

\begin{figure}[h]
  \begin{center}
    \subfloat[$J(\widehat \mu_k)-J(\mu_{40})$]{\includegraphics[width=0.3\textwidth]{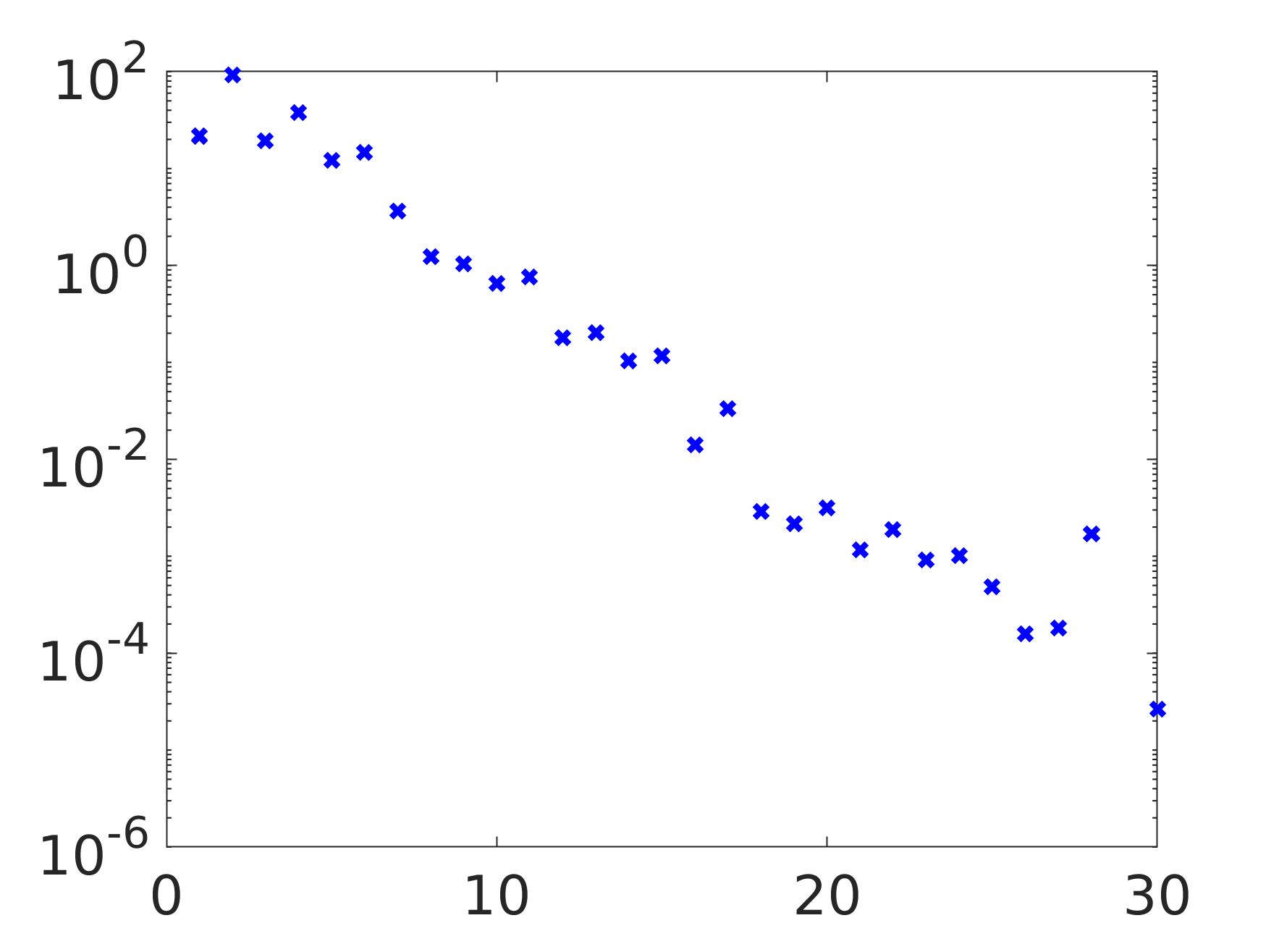} \label{sub:distJ}}
    \subfloat[$\dist(\Omega_k,\xi)$]{\includegraphics[width=0.3\textwidth]{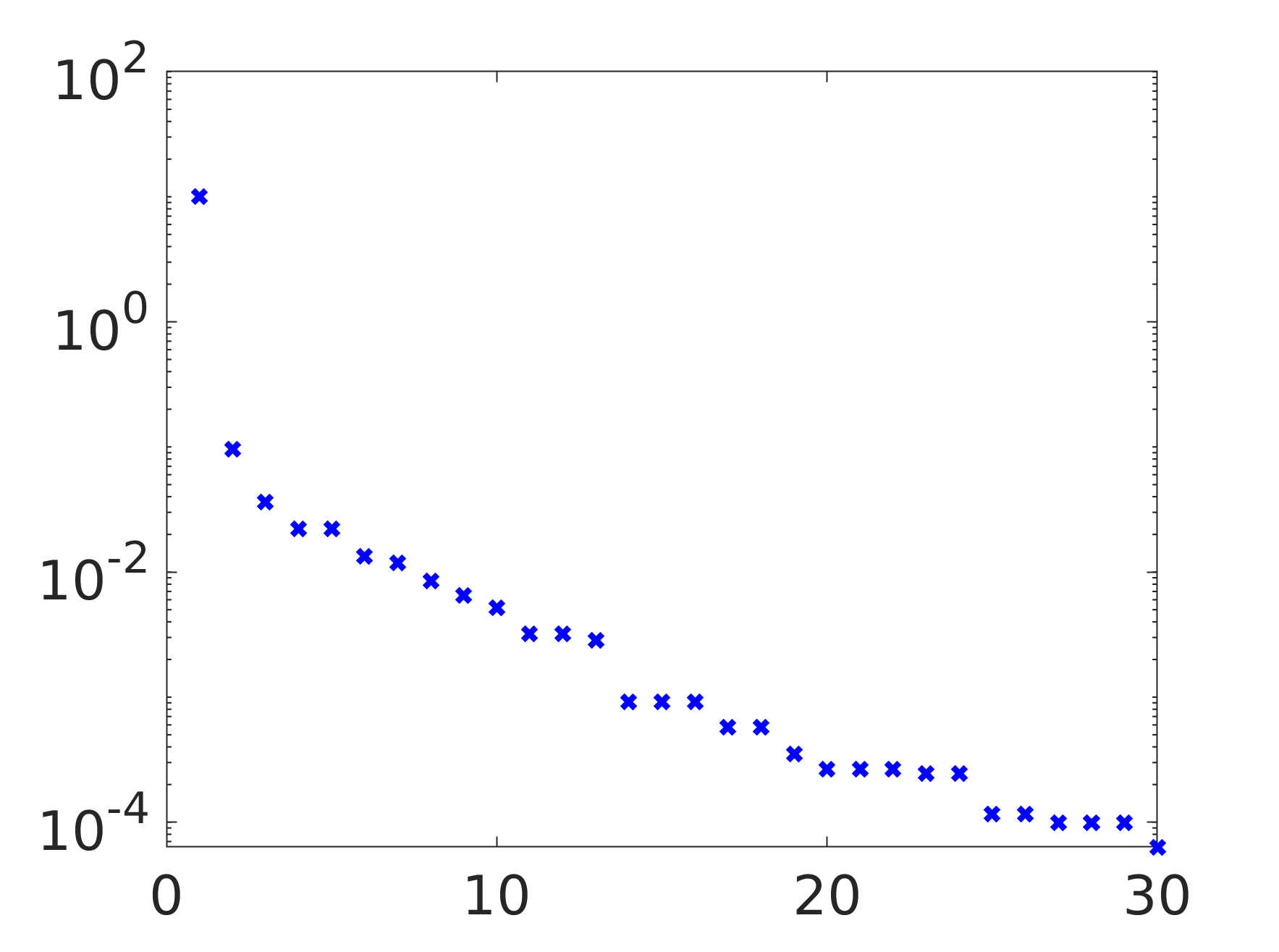} \label{sub:distO}}
    \subfloat[$\dist(X_k,\xi)$]{\includegraphics[width=0.3\textwidth]{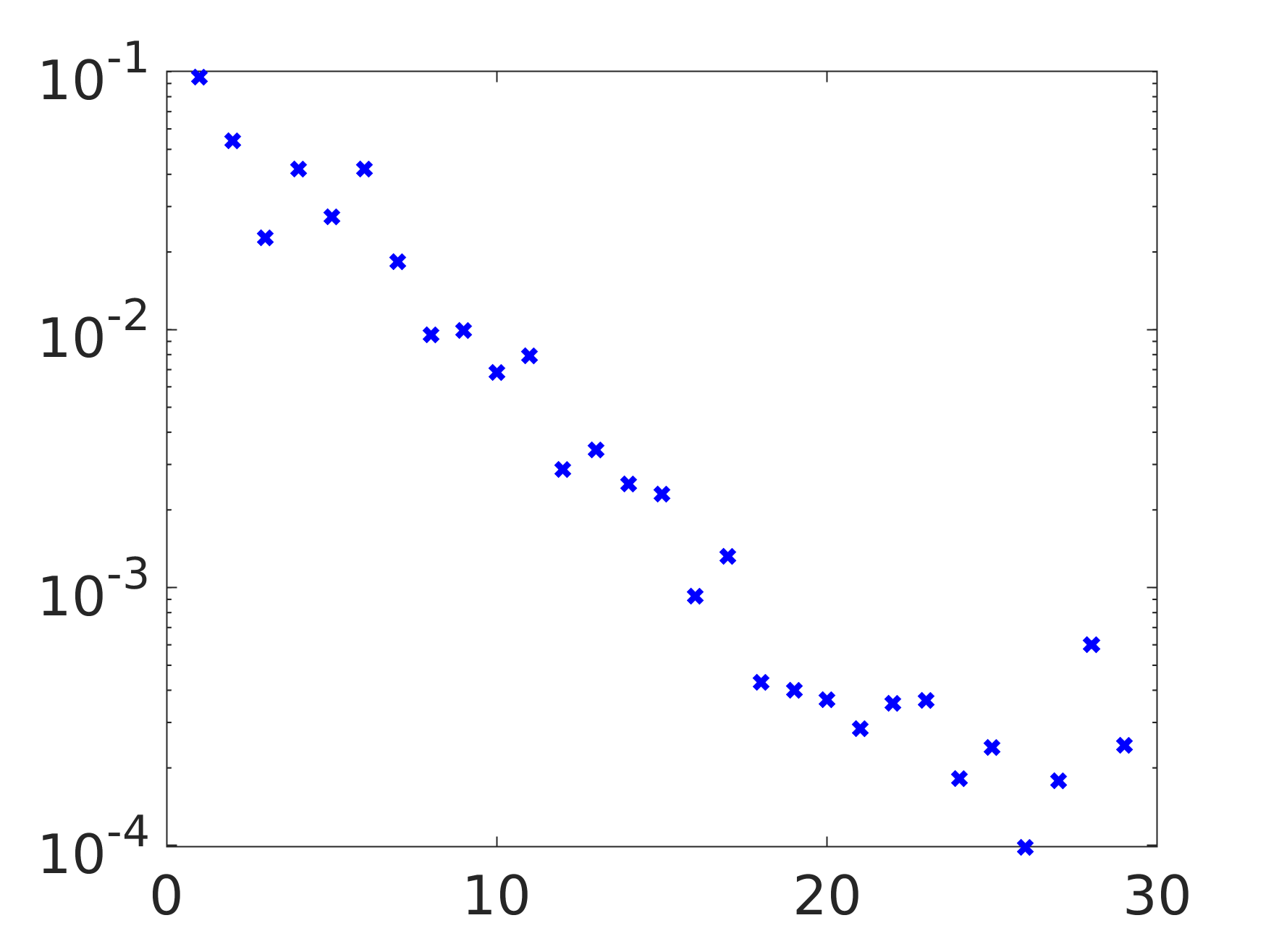} \label{sub:distX}}
    
    \subfloat[$\|q_k-q_{40}\|_2$]{\includegraphics[width=0.3\textwidth]{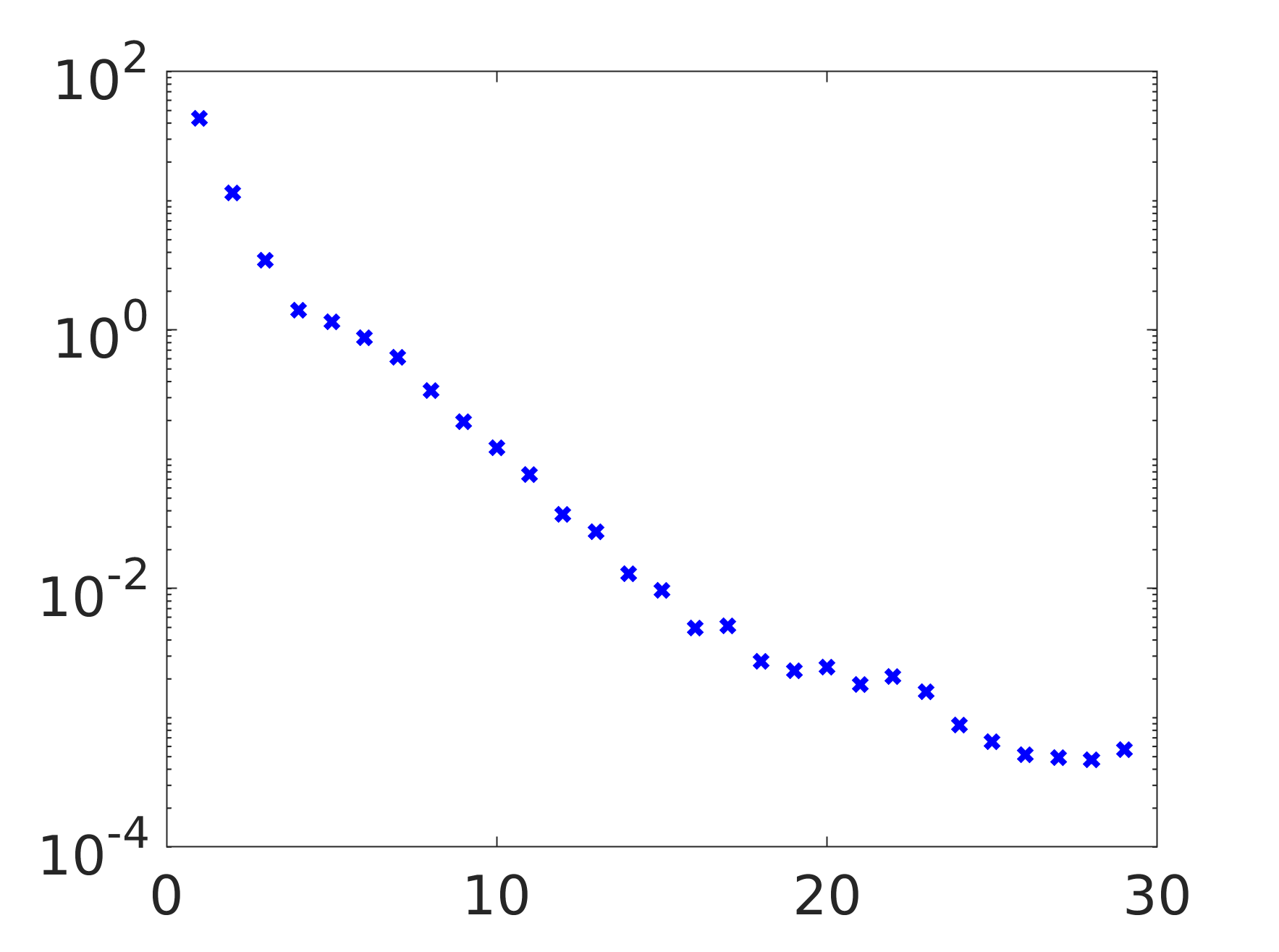} \label{sub:distq}}
    \subfloat[$\|A^*q_k\|_\infty-1$]{\includegraphics[width=0.3\textwidth]{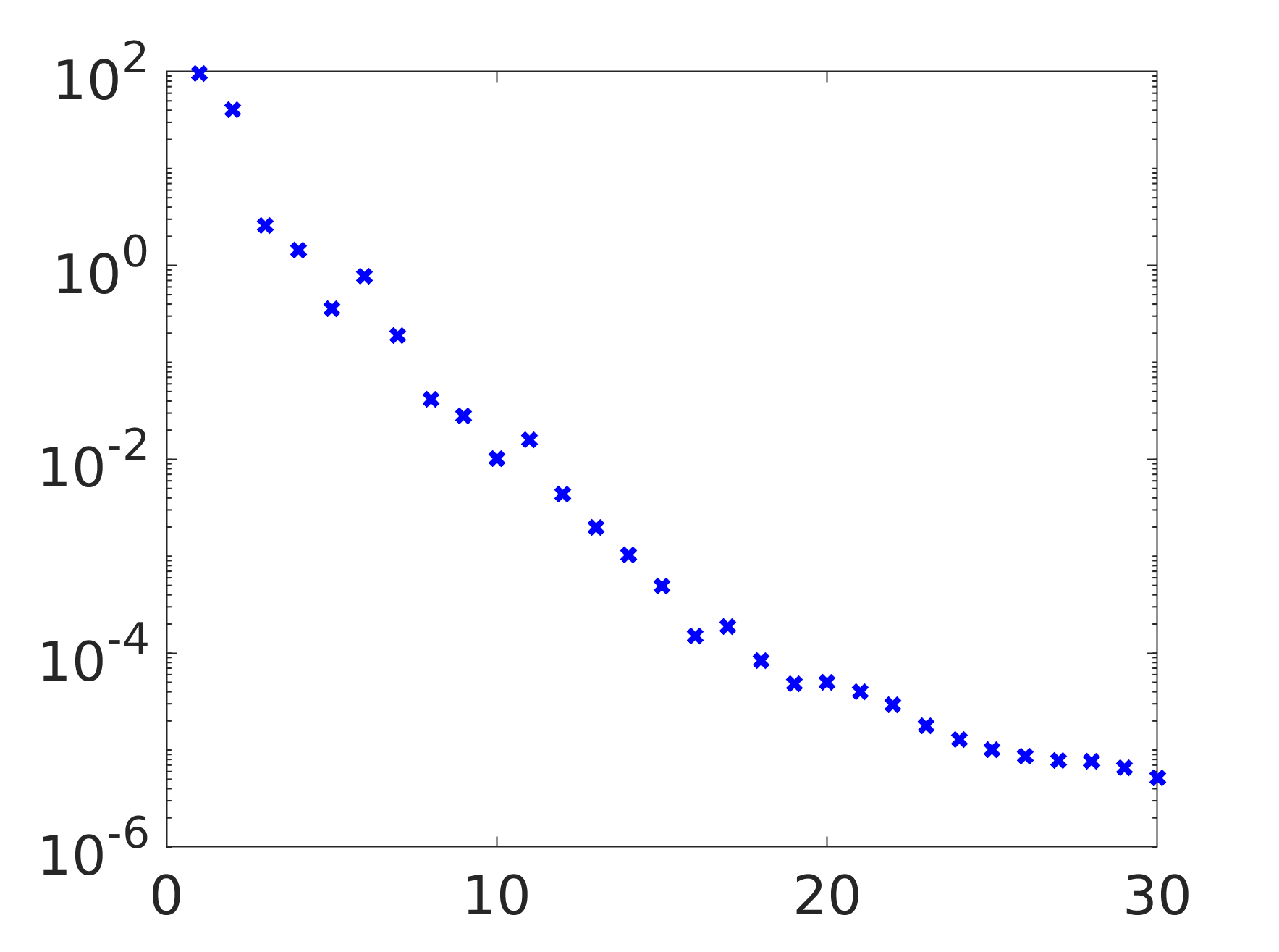} \label{sub:viol}}
    \subfloat[$|X_k|$]{\includegraphics[width=0.3\textwidth]{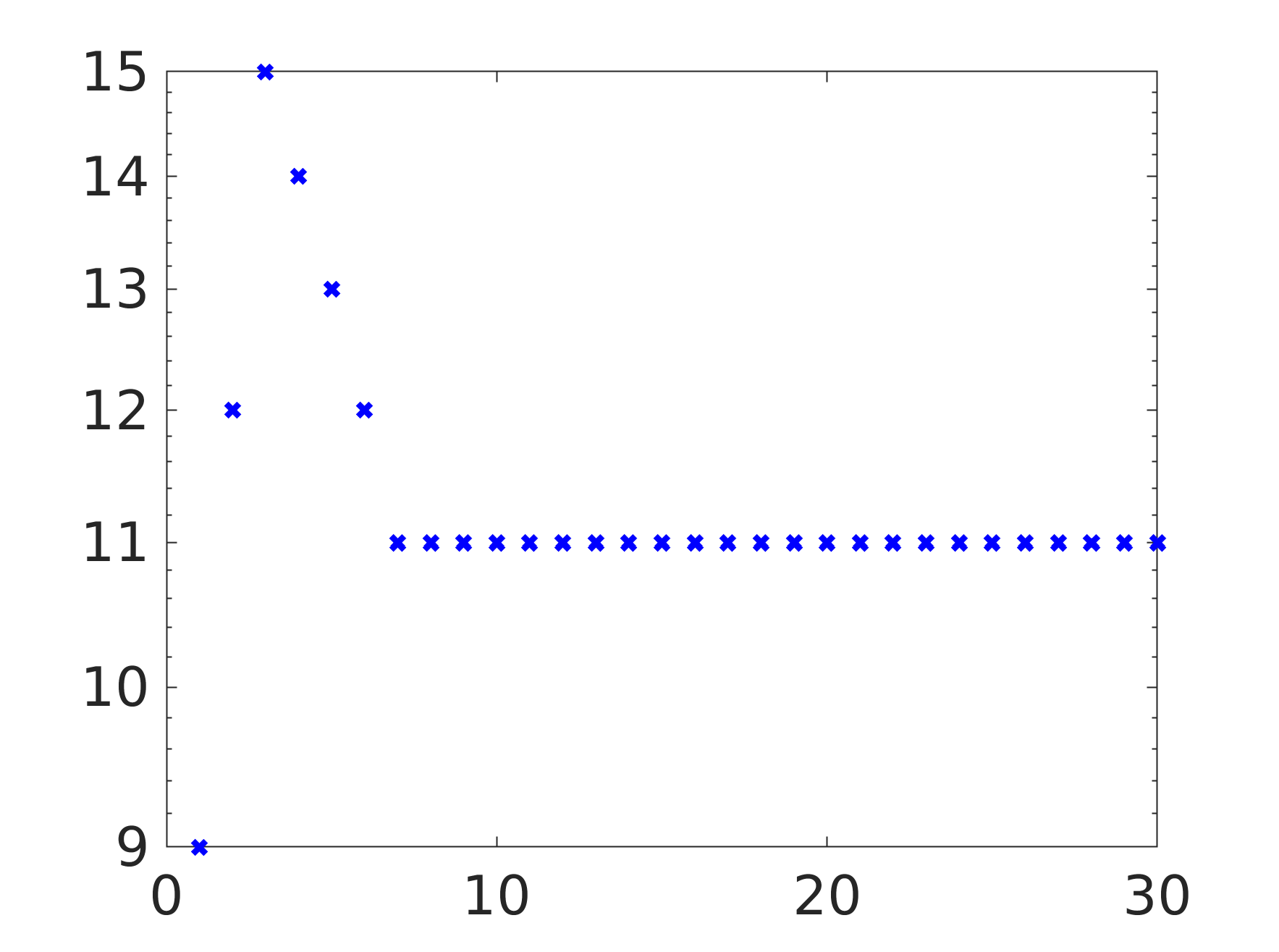} \label{sub:max}}
      \caption{Plot of several quantities of interest along the exchange algorithm's iterates.}
    \label{fig:SeveralQuantities}
  \end{center}
\end{figure}

\subsubsection{Continuous method}

In this experiment, we evaluate the behavior of the gradient descent \eqref{eq:continuous_gradient} depending on the initialization $(\alpha^{(0)},X^{(0)})$ and on the number of iterations. 
We use the same setting as in the previous section. 
The left graph of Fig. \ref{fig:continuous_method} illustrates that the gradient descent typically converges linearly when initialized close enough to the true minimizer $(\alpha^{\star},\xi)$. 
This was predicted by Theorem \ref{prop:G}. 
In this case (and actually all the others related to this experiment), it converges to machine precision in less than 1000 iterations. 
This is remarkable since the gradient descent is a simple algorithm that can be easily improved by using e.g. Nesterov acceleration (we proved that the function is locally convex) or other optimization schemes such as L-BFGS.

In order to evaluate the size of the basin of attraction around the global minimizer, we start from random points of the form $(\alpha^{(0)},X^{(0)}) = (\alpha^{\star},\xi) + (\Delta_\alpha, \Delta_X)$, where $\Delta_\alpha$ and $\Delta_X$ are random perturbations with an amplitude set as $\|(\Delta_\alpha, \Delta_X)\|_2 = \gamma \|(\alpha^{\star},\xi)\|_2$, with $\gamma$ in $[0,1]$. We then run $50$ gradient descents with different realizations of $(\alpha^{(0)},X^{(0)})$ and record the success rate (i.e. the number of times the gradient descent converges to $(\alpha^{\star},\xi)$ with an accuracy of at least $10^{-6}$). We plot this success rate with respect to $\gamma$ in Fig. \ref{sub:success_rate}. As can be seen, the success rate is always $1$ when the relative error $\gamma$ is less than $5\%$, showing that for this particular problem, a rather rough initialization suffices for the gradient descent to converge to the global minimizer.

\begin{figure}[h]
  \begin{center}
    \subfloat[$G(\alpha^{(t)},X^{(t)})-G(\alpha^{\star},\xi)$]{\includegraphics[height=0.3\textwidth]{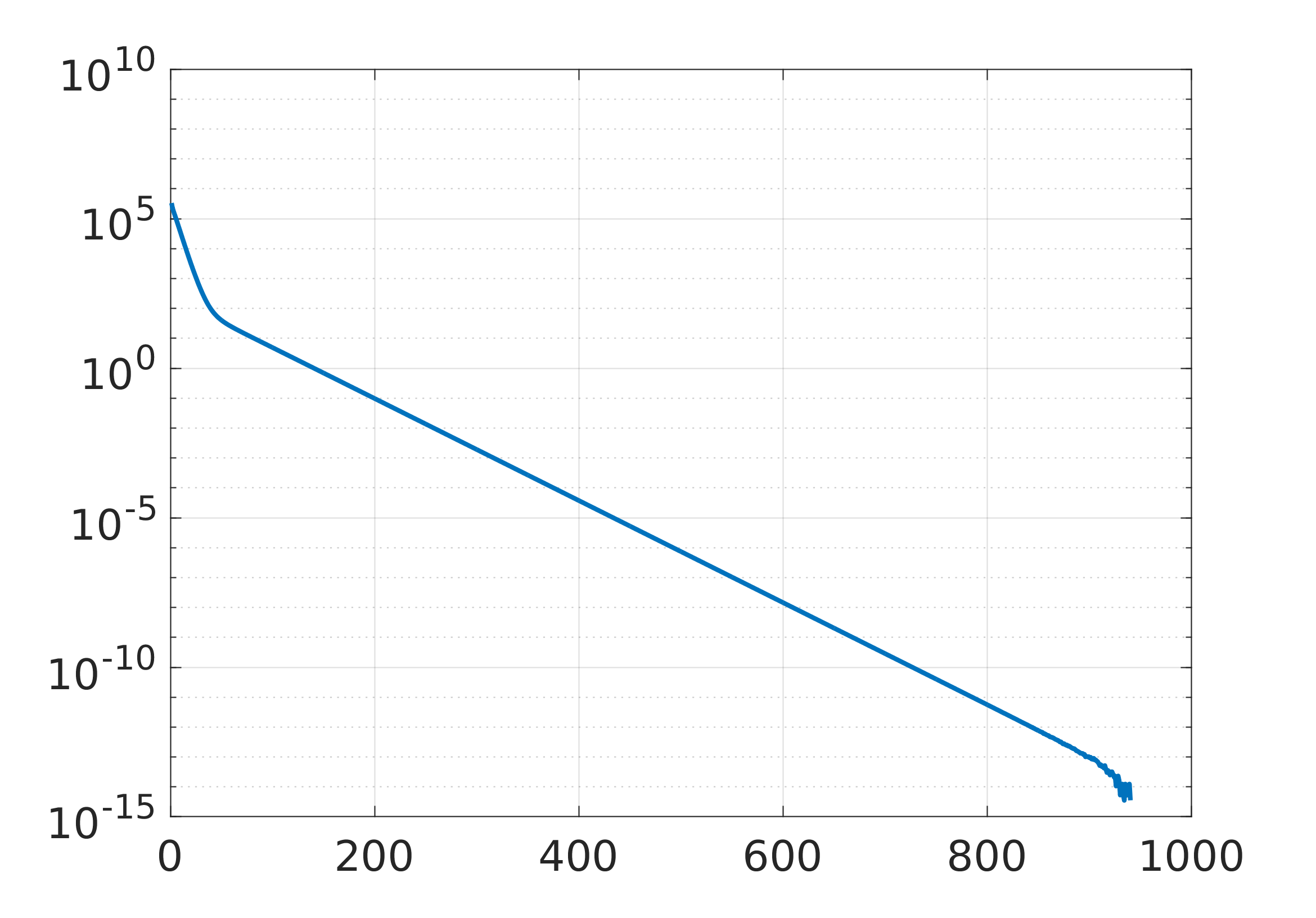}}
    \subfloat[Success rate VS starting point \label{sub:success_rate}]{\includegraphics[height=0.3\textwidth]{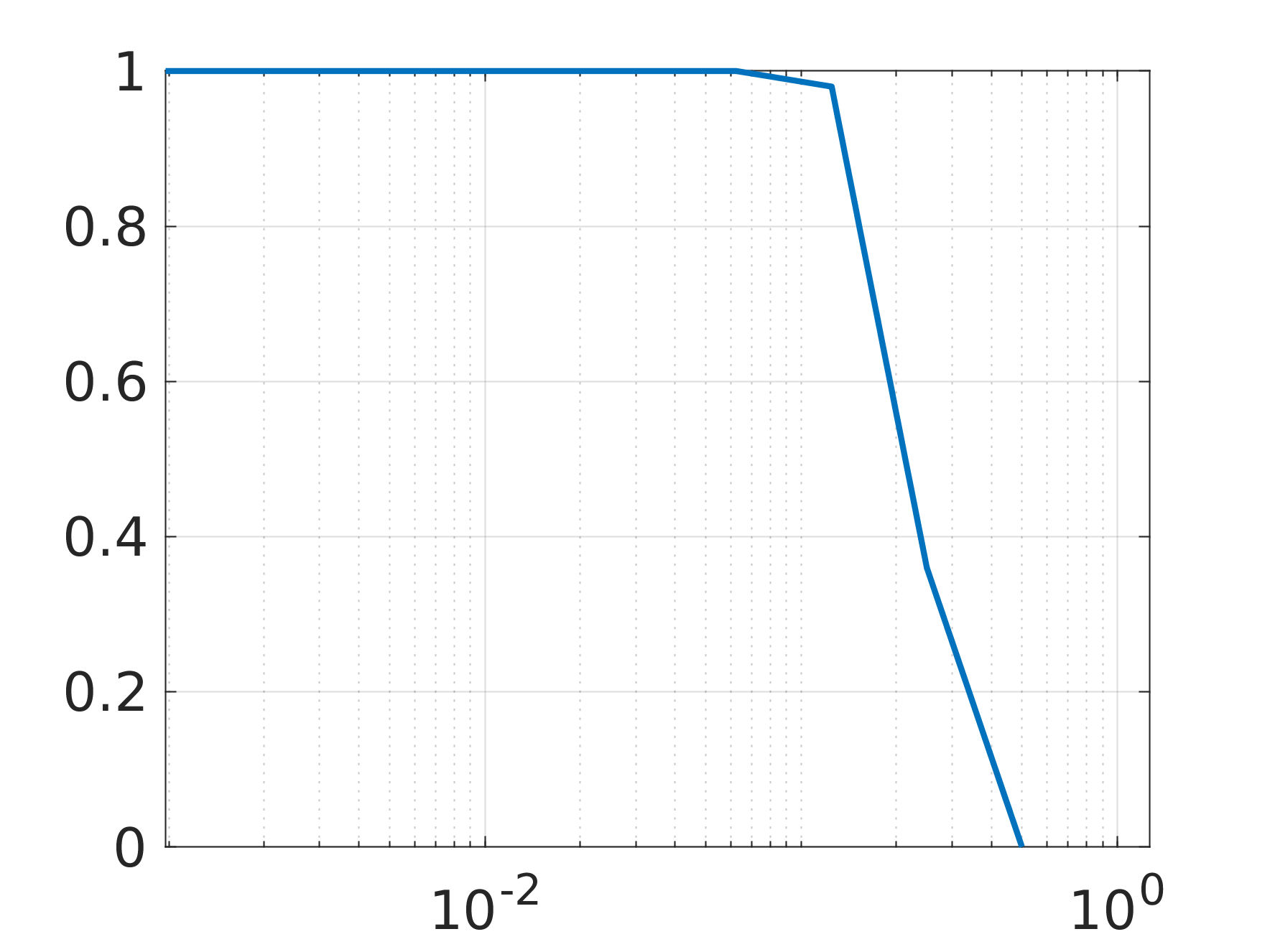}}
    \caption{Left: Typical convergence curve in logarithmic scale when the initial guess $(\alpha^{(0)},X^{(0)})$ is good enough. Right: Success rate of the continuous descent method over 50 runs of the algorithm, depending on the relative amplitude of the perturbation. \label{fig:continuous_method}}
  \end{center}
\end{figure}

\subsubsection{Alternating method}

The alternating method suggested in Algorithm \ref{alg:hybrid} turns out to converge in a single iteration when applied to the setting described above.
We therefore apply it to a more challenging scenario with 30 Dirac masses instead of 11 and more noise. 
The measurements $y$ are shown in Fig. \ref{fig:second_measurements}. 
We compare three implementations: a pure exchange method, an alternating method as in Algorithm  \ref{alg:hybrid} without line 14 and an alternating method as in in Algorithm \ref{alg:hybrid} with line 14.
The conclusions are as follows:
\begin{itemize}
 \item All methods rapidly conclude that the underlying measure contains 30 Dirac masses.  (The pure exchange algorithm after 10 iterations, the alternating method with line 14 already after the first). 
 \item The pure exchange algorithm quickly gets to a point close to the optimum. The positions then slowly converge to the tue locations. It does however eventually find the basin of attraction of $G$ (in this example, it needed 10 iterations).
 \item Line 14 in the alternating method improves the convergence significantly. In fact, omitting it, we need 10 iterations to find the basin of attraction, whereas the version with the line finds it directly. Investigating this effect more closely is an interesting line of future research.
\end{itemize}

\begin{figure}[h]
  \begin{center}
    \subfloat[Measurements $y$ (dense)]{\includegraphics[width=0.45\textwidth]{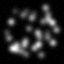}}\ \ 
    \subfloat[Ground truth and recovered solution]{\includegraphics[width=0.45\textwidth]{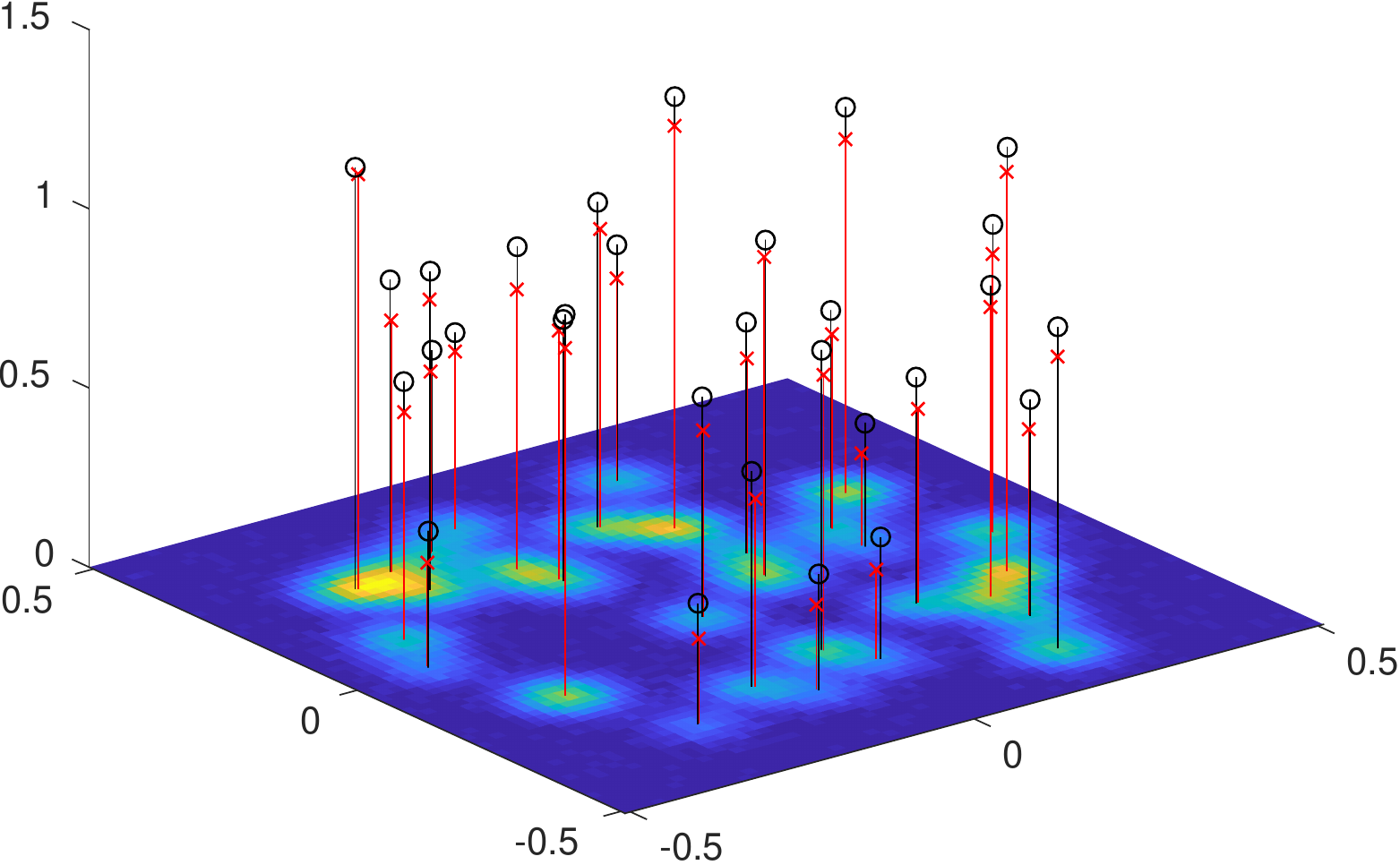}}
    \caption{Left: measurements associated to a denser measure with more noise. Right: 3D illustration of the recovery results. The blue vertical bars with circles indicate the locations and amplitude of the ground truth. The red bars with crosses indicated the recovered measures. Apart from a slight bias in amplitude due to the $\ell^1$-norm, the ground truth is near perfectly recovered. \label{fig:second_measurements}}
  \end{center}
\end{figure}

\bibliographystyle{plain}
\bibliography{iterative_bio}

\end{document}